\documentclass[12pt,a4paper]{amsart}

\usepackage[utf8]{inputenc}
\usepackage{enumitem,a4,titletoc}
\usepackage{amssymb,bbm,graphicx,mathrsfs}
\usepackage[usenames,dvipsnames]{xcolor}
\usepackage[titletoc]{appendix}
\usepackage[pagebackref,colorlinks,linkcolor=BrickRed,citecolor=OliveGreen,urlcolor=blue,hypertexnames=true]{hyperref}

\DeclareMathOperator{\id}{id}

\DeclareMathOperator{\End}{End}
\DeclareMathOperator{\im}{im}

\DeclareMathOperator{\GL}{GL}
\DeclareMathOperator{\Ortho}{O}
\DeclareMathOperator{\Unit}{U}

\DeclareMathOperator{\dom}{dom}
\DeclareMathOperator{\rad}{rad}
\DeclareMathOperator{\tr}{tr}
\newcommand{\one}{{\bf 1}}
\newcommand{\N}{\mathbb{N}}
\newcommand{\R}{\mathbb{R}}
\newcommand{\C}{\mathbb{C}}

\newcommand{\cA}{\mathcal{A}}
\newcommand{\cB}{\mathcal{B}}
\newcommand{\cC}{\mathcal{C}}
\newcommand{\cD}{\mathcal{D}}

\newcommand{\cN}{\mathcal{N}}
\newcommand{\cR}{\mathcal{R}}

\newcommand{\cO}{\mathcal{O}}
\newcommand{\cS}{\mathcal{S}}

\newcommand{\ee}{\mathbf{e}}
\newcommand{\bb}{\mathbf{b}}
\newcommand{\cc}{\mathbf{c}}

\newcommand{\rr}{\mathbbm r}
\newcommand{\rs}{\mathbbm s}
\newcommand{\rf}{\mathbbm f}
\newcommand{\kk}{\mathbbm k}
\newcommand{\kkb}{\overline{\mathbbm k}}
\newcommand{\Dom}{\mathfrak{Dom}_0}
\newcommand{\ulx}{\boldsymbol{x}}
\newcommand{\ulxx}{\boldsymbol{\xi}}
\newcommand{\ulu}{\boldsymbol{u}}
\newcommand{\ulv}{\boldsymbol{v}}
\newcommand{\uly}{\boldsymbol{y}}

\newcommand{\fl}{\mathscr{Z}}
\newcommand{\fls}{\mathscr{Z}^{\operatorname{s}}}
\newcommand{\flh}{\mathscr{Z}^{\operatorname{h}}}

\newcommand{\Langle}{\mathop{<}\!}
\newcommand{\Rangle}{\!\mathop{>}}
\newcommand{\mx}{\Langle \ulx\Rangle}
\newcommand{\px}{\kk\!\Langle \ulx\Rangle}
\newcommand{\pxc}{\C\!\Langle \ulx\Rangle}
\newcommand{\pxr}{\R\!\Langle \ulx\Rangle}

\makeatletter
\def\moverlay{\mathpalette\mov@rlay}
\def\mov@rlay#1#2{\leavevmode\vtop{
		\baselineskip\z@skip \lineskiplimit-\maxdimen
		\ialign{\hfil$#1##$\hfil\cr#2\crcr}}}
\makeatother

\newcommand{\re}{\cR_\kk(\ulx)}
\newcommand{\plangle}{\moverlay{(\cr<}}
\newcommand{\prangle}{\moverlay{)\cr>}}
\newcommand{\rx}{\kk\plangle \ulx \prangle}
\newcommand{\rxc}{\C\plangle \ulx \prangle}

\newcommand{\cyceq}{\overset{\operatorname{\scalebox{.5}{cyc}}}{\sim}}
\newcommand{\cyceqs}{\overset{\operatorname{\scalebox{.36}{cyc}}}{\sim}}
\newcommand{\gX}{\Xi}
\newcommand{\gx}{\xi}
\newcommand{\gY}{\Upsilon}

\textwidth = 6.25 in
\textheight = 9.25 in
\oddsidemargin = 0.0 in
\evensidemargin = 0.0 in
\topmargin = -0.2 in
\headheight = 0.0 in
\headsep = 0.3 in
\parskip = 0.05 in
\parindent = 0.3 in

\newtheorem{thm}{Theorem}[section]
\newtheorem{lem}[thm]{Lemma}
\newtheorem{cor}[thm]{Corollary}
\newtheorem{prop}[thm]{Proposition}
\newtheorem{thmA}{Theorem}

\theoremstyle{definition}

\newtheorem{exa}[thm]{Example}

\theoremstyle{remark}
\newtheorem{rem}[thm]{Remark}

\numberwithin{equation}{section}

\linespread{1.12}
\pagenumbering{arabic}


\begin{document}
	
\setcounter{tocdepth}{3}
\contentsmargin{2.55em} 
\dottedcontents{section}[3.8em]{}{2.3em}{.4pc} 
\dottedcontents{subsection}[6.1em]{}{3.2em}{.4pc}
\dottedcontents{subsubsection}[8.4em]{}{4.1em}{.4pc}

\makeatletter
\newcommand{\mycontentsbox}{%
	{\centerline{NOT FOR PUBLICATION}
		\addtolength{\parskip}{-2.3pt}
		\tableofcontents}}
\def\enddoc@text{\ifx\@empty\@translators \else\@settranslators\fi
	\ifx\@empty\addresses \else\@setaddresses\fi
	\newpage\mycontentsbox\newpage\printindex}
\makeatother

\setcounter{page}{1}

\title[Pencil loci and rational function domains]{Free loci of matrix pencils and domains of noncommutative rational functions}

\author[Igor Klep]{Igor Klep${}^1$}
\address{Igor Klep,  Department of Mathematics, The University of Auckland, New Zealand}
\email{igor.klep@auckland.ac.nz}
\thanks{${}^1$Supported by the Marsden Fund Council of the Royal Society of New Zealand. Partially supported by the Slovenian Research Agency grants P1-0222, L1-4292 and L1-6722.}

\author[Jurij Vol\v{c}i\v{c}]{Jurij Vol\v{c}i\v{c}${}^2$}
\address{Jurij Vol\v{c}i\v{c}, Department of Mathematics, The University of Auckland, New Zealand}
\email{jurij.volcic@auckland.ac.nz}
\thanks{${}^2$Research supported by The University of Auckland Doctoral Scholarship.}

\subjclass[2010]{Primary 15A22, 14P05, 16R30; Secondary 26C15, 16K40, 16N40}
\date{\today}
\keywords{Linear pencil, noncommutative rational function, realization theory, free locus,  real algebraic geometry, hyperbolic polynomial, Kippenhahn's conjecture.}

\begin{abstract}
Consider a monic linear pencil $L(x)=I-A_1x_1-\cdots-A_gx_g$ whose coefficients $A_j$ are $d\times d$ matrices. It is naturally evaluated at $g$-tuples of matrices $X$ using the Kronecker tensor product, which gives rise to its free locus $\fl(L)=\{X:\det L(X)=0\}$. In this article it is shown that the algebras $\cA$ and $\widetilde{\cA}$ generated by the coefficients of two linear pencils $L$ and $\widetilde{L}$, respectively, with equal free loci are isomorphic up to radical, i.e., $\cA/\rad\cA\cong \widetilde{\cA}/\rad\widetilde{\cA}$. Furthermore, $\fl(L)\subseteq \fl(\widetilde{L})$ if and only if the natural map sending the coefficients of $\widetilde{L}$ to the coefficients of $L$ induces a homomorphism $\widetilde{\cA}/\rad\widetilde{\cA}\to \cA/\rad\cA$. Since linear pencils are a key ingredient in studying noncommutative rational functions via realization theory, the above results lead to a characterization of all noncommutative rational functions with a given domain. Finally, a quantum version of Kippenhahn's conjecture on linear pencils is formulated and proved: if hermitian matrices $A_1,\dots,A_g$ generate $M_d(\C)$ as an algebra, then there exist hermitian matrices $X_1,\dots,X_g$ such that $\sum_iA_i\otimes X_i$ has a simple eigenvalue.

The arXiv version of the article contains an appendix presenting an invariant-theoretic
viewpoint and is due independently to Claudio Procesi and \v Spela \v Spenko.
\end{abstract}

\maketitle


\section{Introduction}

Let $\kk$ be a field of characteristic $0$ and let $A_0,A_1,\dots,A_g\in M_d(\kk)$. The formal affine linear combination $L(x)=A_0-A_1x_1-\cdots-A_gx_g$, where 
$x_i$ are freely noncommuting variables, is called an {\bf affine linear pencil}. If $A_0=I_d$ is the $d\times d$ identity matrix, then $L$ is a {\bf (monic) linear pencil}.

Linear pencils are a key tool in matrix theory and numerical analysis (e.g.~the generalized eigenvalue problem), and they frequently appear in algebraic geometry (cf.~\cite{Dol12,Bea}). Linear pencils whose coefficients are symmetric or hermitian matrices give rise to linear matrix inequalities (LMIs), a pillar of control theory, where many classical problems can be converted to LMIs \cite{BEFB,BGM,SIG97}. LMIs also give rise to feasible regions of semidefinite programs in mathematical optimization \cite{WSV12}. In quantum information theory \cite{NC10} and operator algebras \cite{Pau02} hermitian linear pencils are intimately connected to operator spaces and systems, and completely positive maps \cite{HKM}. Lastly, LMIs, linear pencils and their determinants are
studied from a theoretical perspective in real algebraic geometry \cite{HV,Bra,NT,KPV}. 

In this paper we associate
to each linear pencil $L$  its {\bf free (singular) locus} $\fl(L)$, which is defined as the set of all tuples of matrices $X$ over $\kk$ such that
$$L(X)=I\otimes I-\sum_{i=1}^gA_i\otimes X_i$$
is a singular matrix; here $\otimes$ denotes the Kronecker tensor product. We will address the following question: \emph{If $\fl(L)\subseteq\fl(\widetilde{L})$, what can be said about the relation between the coefficients of $L$ and $\widetilde{L}$?}

Our interest in linear pencils originates from their relation with the free skew field of noncommutative rational functions \cite{Ber,Coh,Reu}. Namely, if $\rr$ is a noncommutative rational function that is regular at the origin, then there exists a monic linear pencil $L$ and vectors $\bb,\cc$ over $\kk$ such that
\begin{equation}\label{e:re}
\rr=\cc^t L^{-1}\bb.
\end{equation}
Such presentations of noncommutative rational functions, called realizations, are powerful tools in automata theory \cite{BR}, control theory \cite{BGM,KVV2} and free probability \cite{BMS}. One way of defining noncommutative rational functions is through matrix evaluations of formal noncommutative rational expressions \cite{HMV,KVV1,Vol}. This gives rise to the notion of a domain of a noncommutative rational function, i.e., the set of all matrix tuples where it can be evaluated. While a realization of the form \eqref{e:re} is not unique, there is a canonical, ``smallest'' one $\rr=\cc_0^t L_0^{-1}\bb_0$. The domain of $\rr$ is then the complement of the free locus $\fl(L_0)$ \cite{KVV1}. It is thus natural to ask: (a) When is a noncommutative rational function regular, i.e., defined everywhere? (b) When is the domain of a rational function contained in the domain of another one?
 (c) What can be said about the set of all rational functions with a given domain? 

\subsection{Main results}

Our first main result is a Singularit\"at\-stellensatz for linear pencils explaining when free loci of two linear pencils are comparable. If $L=I-\sum_iA_ix_i$ is a monic pencil of size $d$, let $\cA\subseteq M_d(\kk)$ be the $\kk$-algebra generated by $A_i$. We say that $L$ is minimal if it is of minimal size among all pencils with the same free locus.

\begin{thmA}[Singularit\"atstellensatz]\label{t:intro1}
Let $L$ and $\widetilde{L}$ be monic linear pencils. Then $\fl(L)\subseteq\fl(\widetilde{L})$ if and only if there exists a homomorphism $\widetilde{\cA}/\rad\widetilde{\cA}\to\cA/\rad\cA$ induced by $\widetilde{A}_i\mapsto A_i$.

Moreover, if $L,\widetilde{L}$ are minimal and $\cA,\widetilde{\cA}$ are semisimple, then $\fl(L)=\fl(\widetilde{L})$ if and only if there exists an invertible matrix $P$ such that $\widetilde{A}_i=PA_iP^{-1}$ for all $1\le i\le g$, i.e., the linear pencil $\widetilde{L}$ is a conjugate of $L$.
\end{thmA}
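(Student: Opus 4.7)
The plan is to prove the ``if'' direction of the Singularit\"atstellensatz by descent to the semisimple quotient, handle the ``only if'' direction by a reduction to semisimple algebras followed by an Artin--Wedderburn block-matching, and derive the conjugacy statement by applying the equivalence in both directions and invoking Skolem--Noether. For the ``if'' direction, assume a homomorphism $\phi\colon\widetilde{\cA}/\rad\widetilde{\cA}\to\cA/\rad\cA$ with $\phi(\widetilde{A}_i+\rad\widetilde{\cA})=A_i+\rad\cA$ exists. The key observation is that for any finite-dimensional $\kk$-algebra $B$, one has $\rad(B\otimes M_n(\kk))=\rad(B)\otimes M_n(\kk)$, so an element of $B\otimes M_n(\kk)$ is invertible iff its image modulo the radical is invertible. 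Consequently $L(X)\in\cA\otimes M_n(\kk)$ is invertible iff its image in $(\cA/\rad\cA)\otimes M_n(\kk)$ is, and likewise for $\widetilde{L}(X)$; since $\phi\otimes\id$ sends the image of $\widetilde{L}(X)$ to that of $L(X)$ and algebra homomorphisms preserve invertibility, $\widetilde{L}(X)$ invertible implies $L(X)$ invertible, i.e.\ $\fl(L)\subseteq\fl(\widetilde{L})$ by contraposition.

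For the ``only if'' direction, I would first reduce to semisimple algebras: since the free locus is unchanged under passage to the semisimple quotient (by the invertibility criterion above), one can replace $L,\widetilde{L}$ by pencils whose coefficient algebras are $\cA/\rad\cA$ and $\widetilde{\cA}/\rad\widetilde{\cA}$. So assume $\cA,\widetilde{\cA}$ are semisimple and write $\cA\cong\prod_j B_j$, $\widetilde{\cA}\cong\prod_k\widetilde{B}_k$ via Artin--Wedderburn; up to conjugation, $L$ is block-diagonal with simple pencils $L_j$ whose coefficients generate $B_j$, so $\fl(L)=\bigcup_j\fl(L_j)$, and similarly for $\widetilde{L}$. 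The hypothesis $\fl(L)\subseteq\fl(\widetilde{L})$ forces $\fl(L_j)\subseteq\bigcup_k\fl(\widetilde{L}_k)$ for each $j$. The main obstacle is the simple block-matching step: I would show that each $\fl(L_j)$ is in fact contained in a single $\fl(\widetilde{L}_{k(j)})$, and that this inclusion of simple free loci produces a homomorphism $\widetilde{B}_{k(j)}\to B_j$ sending the generators to the generators. The tools are irreducibility of the simple pencils (so that $\fl(L_j)$ cannot split nontrivially) together with test evaluations on tuples built from irreducible representations of $\cA$ (for instance taking $X_i=\rho(A_i)^\top$ for an irrep $\rho$), which witness singularity of exactly one simple block at a time; assembling these block-wise maps produces the desired homomorphism $\widetilde{\cA}\to\cA$.

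For the conjugacy statement, suppose $L,\widetilde{L}$ are minimal and semisimple with equal free loci. Two applications of the first part yield homomorphisms $\widetilde{\cA}\to\cA$ and $\cA\to\widetilde{\cA}$, each sending generators to generators; their compositions fix the generators and hence equal the identity, so both maps are isomorphisms. Minimality forces $L$ and $\widetilde{L}$ to have the same size $d$, producing an algebra isomorphism between two subalgebras of $M_d(\kk)$ with $A_i\mapsto\widetilde{A}_i$. Minimality together with semisimplicity then forces each isotypic component of the tautological representation of $\cA$ on $\kk^d$ to appear with the minimal multiplicity compatible with the free locus, and since the paired-up Wedderburn components on the two sides necessarily carry matching multiplicities, Skolem--Noether applied componentwise produces an invertible $P\in M_d(\kk)$ with $\widetilde{A}_i=PA_iP^{-1}$ for all $i$.
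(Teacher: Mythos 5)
Your ``if'' direction and your conjugacy argument are essentially the paper's. The ``if'' direction is correct in spirit (invertibility of $L(X)$ is detected modulo the radical, and a homomorphism preserves invertibility), but you should note that $\cA$ and $\widetilde\cA$ may fail to contain the identity matrix, so in general $L(X)\notin\cA\otimes M_n(\kk)$ and the step ``$L(X)$ is invertible iff its image in $(\cA/\rad\cA)\otimes M_n(\kk)$ is'' does not parse as stated; the paper works through Cayley--Hamilton precisely to sidestep this, and your version can be patched by passing to unitizations (using $\rad\cA^\sharp=\rad\cA$) and extending the given homomorphism to a unital one. The conjugacy part matches Lemma~\ref{l:SN} and Theorem~\ref{t:unique}.

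The genuine gap is in the ``only if'' direction. Your block-matching hinges on the irreducibility of a simple free locus $\fl(L_j)$, i.e.\ that $\fl(L_j)\subseteq\bigcup_k\fl(\widetilde L_k)$ forces containment in a single $\fl(\widetilde L_{k(j)})$. But in the paper this irreducibility is Proposition~\ref{p:components}, whose proof \emph{uses} Theorem~\ref{t:main} (through Corollary~\ref{c:equal}); invoking it here is circular. Even granting block-matching, you have only reduced to the case where both algebras are simple, and that is where the real content lies: one must still show that $\fl(L_j)\subseteq\fl(\widetilde L_{k(j)})$ forces $\widetilde B_i\mapsto B_i$ to be a well-defined map $\widetilde B_{k(j)}\to B_j$. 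Your proposed tool --- ``test evaluations'' at tuples like $X_i=\rho(A_i)^\top$ --- is not worked out: it is unclear why such points lie in $\fl(L_j)$, why they isolate a single simple block of $\widetilde L$, or how singularity at finitely many points would propagate to an algebra homomorphism on all of $\widetilde B_{k(j)}$. The paper's mechanism is quite different and is the technical heart of the result: Lemma~\ref{l:poly} (the algebraization trick) shows that the kernel of $L_A(X)-f(A)\otimes Y$, for any $f\in\px_+$, can be realized as the kernel of $L_A(X')$ at a point $X'$ chosen independently of $A$, so $\fl(L_A)\subseteq\fl(L_B)$ upgrades to $\fl(L_A-f(A)y)\subseteq\fl(L_B-f(B)y)$ (Corollary~\ref{c:poly}); combined with Proposition~\ref{p:nil}, which characterizes joint nilpotency of $\{f(A)\}$ by $y$-invariance of the free locus, this gives directly that $f(B)\in\rad\cB\Rightarrow f(A)\in\rad\cA$, whence the homomorphism. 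Your proposal does not supply a replacement for this step, and without it the ``only if'' direction is not proved.
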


The first part of Theorem \ref{t:intro1} is proved as Theorem \ref{t:main} in Subsection \ref{ss:singsatz}. The second statement appears in Subsection \ref{ss:irr} as Theorem \ref{t:unique}.

Next we combine the Singularit\"at\-stellensatz with the aforementioned realization theory. First we elucidate everywhere-defined noncommutative rational functions. Theorem \ref{t:reg} is an effective version of the following statement.

\begin{thmA}\label{t:intro0}
A regular noncommutative rational function is a noncommutative polynomial.
\end{thmA}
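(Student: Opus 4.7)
The plan is to reduce Theorem~\ref{t:intro0} to the Singularit\"atstellensatz (Theorem~\ref{t:intro1}) via minimal realization theory. Let $\rr$ be regular and fix a minimal realization $\rr = \cc^t L^{-1}\bb$ with $L = I_d - \sum_i A_i x_i$. As recalled in the introduction, the domain of $\rr$ is the complement of $\fl(L)$, so the regularity of $\rr$ is equivalent to $\fl(L) = \emptyset$.

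The key step is to compare $L$ with the size-one trivial pencil $\widetilde L = 1$, whose coefficients all vanish and whose free locus is empty. Since $\fl(L) \subseteq \fl(\widetilde L)$ trivially, Theorem~\ref{t:intro1} delivers an algebra homomorphism $\widetilde{\cA}/\rad\widetilde{\cA} \to \cA/\rad\cA$ induced by $\widetilde A_i \mapsto A_i$. As each $\widetilde A_i = 0$, the existence of such an induced map forces $A_i \in \rad\cA$ for every $i$.

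Now $\rad\cA$ is a nilpotent ideal of $\cA \subseteq M_d(\kk)$, so by the standard bound one has $(\rad\cA)^d = 0$. Hence the Neumann expansion
\[
L(X)^{-1} \;=\; \sum_{k\geq 0}\Bigl(\sum_i A_i\otimes X_i\Bigr)^k
\]
terminates at $k = d-1$, because every word $A_{i_1}\cdots A_{i_k}$ with $k \geq d$ vanishes. Therefore $\rr = \cc^t L^{-1}\bb$ is a noncommutative polynomial of degree at most $d-1$, which yields both the theorem and its effective refinement (Theorem~\ref{t:reg}).

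The substantive obstacle is entirely absorbed in the proof of Theorem~\ref{t:intro1}: once that Singularit\"atstellensatz is available, its specialization to $\widetilde L = 1$ is exactly the right application, and the remaining steps are elementary. A self-contained alternative would be to argue directly that $\fl(L) = \emptyset$ forces $\sum_i A_i \otimes X_i$ to be nilpotent for every matrix tuple $X$ (through the scaling $X \mapsto tX$), and then to deduce the nilpotence of $\cA$ modulo $\kk I$ via trace identities for generic matrices; but that route essentially reproves the special case of Theorem~\ref{t:intro1} needed here.
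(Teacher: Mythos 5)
Your proof of Theorem~\ref{t:intro0} is correct, and the route you take — comparing $L$ against the trivial pencil $\widetilde L = 1$ and invoking the Singularit\"atstellensatz — is precisely the deduction via Corollary~\ref{c:rat} that the paper itself mentions at the start of Subsection~\ref{ss:reg} as one that \emph{could} be used. The specialization is set up correctly: $\fl(L)=\emptyset$ gives $\fl(L)\subseteq\fl(\widetilde L)$, the induced homomorphism $\{0\}=\widetilde\cA/\rad\widetilde\cA \to \cA/\rad\cA$ sends $\widetilde A_i=0$ to $A_i$, so $A_i\in\rad\cA$ for all $i$, hence $\cA=\rad\cA$ is nilpotent of index at most $d$, and the Neumann series truncates.

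The paper, however, deliberately bypasses Theorem~\ref{t:main} in its proof of Theorem~\ref{t:reg}: it argues directly from the hypothesis $\dom_m\rr=M_m(\kk)^g$, using Remark~\ref{r:const} to get nilpotence of $\sum_i A_i\otimes Y_i$ for $Y_i$ of size $\lambda(d)$ and then Proposition~\ref{p:nil} to conclude $\cA$ is nilpotent. The point of this more hands-on argument is to get the explicit testing size $m$ in the hypothesis. Your route, by contrast, starts from the full assumption $\fl(L)=\emptyset$, i.e.\ you need to know the domain equals \emph{all} of $\bigcup_n M_n(\kk)^g$ a priori. So the closing claim that your argument ``yields both the theorem and its effective refinement (Theorem~\ref{t:reg})'' overreaches: you do recover the degree bound $d-1$ (which is just the nilpotency index of $\cA\subseteq M_d(\kk)$), but you do not recover the statement that it suffices to verify $\dom_m\rr=M_m(\kk)^g$ for the specific $m$ of Theorem~\ref{t:reg}. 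That quantitative content is exactly what the paper's direct proof buys and what is lost by appealing to the Singularit\"atstellensatz as a black box.
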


A domain of a noncommutative rational function is co-irreducible if it is not an intersection of larger domains. We say that a noncommutative rational function $\rr$ is irreducible if $\rr=\cc^tL_A^{-1}\bb$, where $L_A$ is a minimal monic pencil and $\cA$ is simple. For every co-irreducible domain $D$ we can find a finite family of linearly independent irreducible functions $\cR(D)$ such that every irreducible function with domain $D$ lies in the linear span of $\cR(D)$. A precise characterization of noncommutative rational functions with a given domain is now as follows.

\begin{thmA}\label{t:intro2}
If a noncommutative rational function $\rr$ is defined at the origin, then its domain equals $D_1\cap\cdots\cap D_s$ for some $s\in\N$ and co-irreducible $D_j$, and $\rr$ is a noncommutative polynomial in $\{x_1,\dots,x_g\}\cup \cR(D_1)\cup\cdots\cup \cR(D_s)$.
\end{thmA}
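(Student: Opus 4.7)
The plan is to extract both assertions simultaneously from a minimal realization of $\rr$. Write $\rr=\cc^tL^{-1}\bb$ with $L=I-\sum_iA_ix_i$ minimal, let $\cA$ be the algebra generated by the $A_i$, and recall that $\dom\rr=\fl(L)^c$. First I would invoke the Wedderburn principal theorem to split $\cA=\cA_{\mathrm{ss}}\oplus\rad\cA$ as vector spaces with $\cA_{\mathrm{ss}}$ a semisimple subalgebra, and then Wedderburn--Artin to write $\cA_{\mathrm{ss}}\cong\bigoplus_{j=1}^sB_j$ with each $B_j$ simple and carrying a central idempotent $e_j$, the $e_j$ being pairwise orthogonal.

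Next I would attach to each simple summand a minimal monic pencil $L_j$ (acting on the image of $e_j$) whose coefficient algebra is $B_j$, and set $D_j=\fl(L_j)^c$. By the Singularit\"atstellensatz (Theorem \ref{t:intro1}), the free locus depends only on the algebra modulo radical, so $\fl(L)$ equals the free locus of a pencil whose coefficient algebra is $\bigoplus_jB_j$. Because this algebra is a direct sum of simple blocks, its free locus is the union $\bigcup_j\fl(L_j)$, and taking complements yields $\dom\rr=D_1\cap\cdots\cap D_s$. Each $D_j$ is co-irreducible because $B_j$ is simple and $L_j$ is minimal, so no larger domain refines $D_j$ without collapsing the simple block.

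For the polynomial representation, decompose $A_i=A_i^{\mathrm{ss}}+A_i^{\mathrm{rad}}$ with $A_i^{\mathrm{ss}}\in\cA_{\mathrm{ss}}$ and $A_i^{\mathrm{rad}}\in\rad\cA$, and set $\alpha=\sum_iA_i^{\mathrm{ss}}x_i$, $\rho=\sum_iA_i^{\mathrm{rad}}x_i$. Since $\rad\cA$ is nilpotent, say $(\rad\cA)^m=0$, one obtains the \emph{finite} identity
\begin{equation*}
L^{-1}=\sum_{k=0}^{m-1}(I-\alpha)^{-1}\bigl(\rho(I-\alpha)^{-1}\bigr)^{k},
\end{equation*}
in which every factor $\rho$ is linear in the $x_i$. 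The orthogonality $e_je_k=\delta_{jk}e_j$ forces $\alpha^{k}=\sum_j(e_j\alpha)^{k}$ for $k\ge 1$, so summing the Neumann series term by term yields
\begin{equation*}
(I-\alpha)^{-1}=\Bigl(I-\sum_{j}e_j\Bigr)+\sum_{j}(e_j-e_j\alpha)^{-1},
\end{equation*}
where $(e_j-e_j\alpha)^{-1}$ is interpreted as the inverse on the image of $e_j$. Restricted to that subspace, $e_j-e_j\alpha$ is exactly $L_j$, and the entries of $L_j^{-1}$ are irreducible rational functions with domain $D_j$, hence lie in the linear span of $\cR(D_j)$. Substituting both displayed identities into $\cc^tL^{-1}\bb$ and expanding expresses $\rr$ as a noncommutative polynomial in $\{x_1,\dots,x_g\}\cup\bigcup_j\cR(D_j)$.

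The main obstacle I expect is the matching between Wedderburn summands $B_j$ and genuine minimal irreducible pencils in the paper's sense: one needs that the block $L_j$ produced from $B_j$ really is minimal for the irreducible functions generated by the entries of $L_j^{-1}$, so that its free locus is co-irreducible and equals $D_j$ exactly rather than some strictly larger set. This should follow block-by-block from the uniqueness clause of Theorem \ref{t:intro1}, combined with the care needed when passing from $L$ to its semisimple-quotient replacement without altering the free locus. Once that structural matching is secured, the nilpotency of $\rad\cA$ reduces the remaining argument to a finite expansion.
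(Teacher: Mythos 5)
Your proposal follows essentially the same route as the paper. Both proofs use the Wedderburn principal theorem to split $\cA=\cS\oplus\rad\cA$ (your $\cA_{\mathrm{ss}}$ is the paper's $\cS$, produced by its Lemma~\ref{l:WPT}), decompose each pencil coefficient as a semisimple part plus a nilpotent part, use nilpotency of $\rad\cA$ to truncate the Neumann series of $L^{-1}$ after $d$ terms, and then unpack $(I-S)^{-1}$ block-by-block along the Wedderburn--Artin decomposition of $\cS$ to obtain the elements of the $\cR(D_j)$. The first assertion (the decomposition $\dom\rr=D_1\cap\cdots\cap D_s$) is handled identically in substance: you invoke Theorem~\ref{t:intro1} while the paper invokes its Proposition~\ref{p:components}, which is the free-locus side of the same coin.

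The one place where you are slightly imprecise, and which you yourself flag at the end, is the identification of $(e_j-e_j\alpha)$ restricted to $\im e_j$ with the minimal pencil $L_j$. The simple block $B_j$ generally acts on $\im e_j$ as a multiple of its irreducible representation; after the explicit conjugation supplied by Lemma~\ref{l:WPT} the restriction is $I\otimes L_{A^{(j)}}$, not $L_{A^{(j)}}$ itself, so your $L_j$ is minimal but $(e_j-e_j\alpha)|_{\im e_j}$ is in general $I\otimes L_j$. This does not affect the conclusion, since the entries of $(I\otimes L_j)^{-1}$ are just (repeated) entries of $L_j^{-1}$ and therefore lie in the span of $\cR(D_j)$ by Proposition~\ref{p:irr}; but the paper's Lemma~\ref{l:WPT} makes this matching explicit and is the clean way to discharge the concern you raised. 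Your orthogonal-idempotent identity for $(I-\alpha)^{-1}$ is correct, including the $(I-\sum_je_j)$ term that handles the possibility that $\cA$ is non-unital.
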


See Theorem \ref{t:rat} in Subsection \ref{ss:char} for the proof.

Lastly, we apply our techniques to prove the quantum version of Kippenhahn's conjecture \cite{Kip}. The original conjecture was as follows: if hermitian $d\times d$ matrices $H_1$ and $H_2$ generate the whole $M_d(\C)$, then there exist real numbers $\alpha_1$ and $\alpha_2$ such that $\alpha_1H_1+\alpha_2H_2$ has a simple nonzero eigenvalue. While this is false in general \cite{Laf}, we show it is true in a quantum setting.

\begin{thmA}\label{t:intro3}
If $A_1,\dots,A_g\in M_d(\kk)$ generate $M_d(\kk)$ as a $\kk$-algebra, then there exist $n\in\N$ and $X_1,\dots,X_g\in M_n(\kk)$ such that $\sum_iX_i\otimes A_i$ has a nonzero eigenvalue with geometric multiplicity $1$. If $\kk=\C$ and $A_i$ are hermitian, then $X_i$ can also be chosen hermitian.
\end{thmA}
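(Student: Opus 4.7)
The plan is to recast the theorem as a statement about the free locus of the pencil $L(x) = I - \sum_i A_ix_i$ and then apply the Singularit\"atstellensatz. Since $\sum_i X_i\otimes A_i$ and $\sum_i A_i\otimes X_i$ are similar via the tensor-factor swap, they share spectrum and all geometric multiplicities; moreover, for $\lambda\neq 0$ the substitution $Y := \lambda^{-1}X$ converts ``$\lambda$ is an eigenvalue of $\sum_i A_i\otimes X_i$ with one-dimensional eigenspace'' into ``$Y\in\fl(L)$ with $\dim\ker L(Y) = 1$''. So the theorem reduces to producing a point of $\fl(L)$ where $L$ has corank exactly one. Because $\cA = M_d(\kk)$ is simple, Theorem~\ref{t:intro1} lets us replace $L$ by a minimal pencil with the same free locus while keeping $\cA$ equal to $M_d(\kk)$ of the same size $d$: if the minimal version had strictly smaller size, then the first part of Theorem~\ref{t:intro1} applied in both directions would yield an embedding $M_d(\kk)\hookrightarrow\widetilde\cA/\rad\widetilde\cA\subseteq M_{d'}(\kk)$ with $d'<d$, a contradiction. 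So we may assume $L$ is minimal of size $d$.

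I would then argue by contradiction: assume $\dim\ker L(Y)\ge 2$ for every $Y\in\fl(L)$, and produce a pencil $\widetilde L$ of size strictly less than $d$ with $\fl(\widetilde L) = \fl(L)$, contradicting minimality. The heuristic is that a kernel of rank $\ge 2$ at every singular point supplies enough data for a global Schur-complement-type row/column reduction: pick a generic $Y_0\in\fl(L)$ with two linearly independent kernel vectors, spread this data over a Zariski-open subset of $\fl(L)$ in the manner of the generic-point argument underlying the proof of Theorem~\ref{t:intro1}, and use the resulting rational kernel section to drop a row and a column from $L$ while preserving the zero set of $\det L$. In realization-theoretic language, constant corank $\ge 2$ along the free locus forces the minimal realization of $L$ to have size $\le d-1$.

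For the hermitian case over $\C$, hermiticity of $A_i$ and $X_i$ makes $\sum_i A_i\otimes X_i$ hermitian, so algebraic and geometric multiplicities agree and ``simple eigenvalue'' coincides with ``eigenvalue of geometric multiplicity one''. The set of hermitian $(X_1,\ldots,X_g)$ such that $\sum_i A_i\otimes X_i$ has a simple nonzero eigenvalue is open semi-algebraic in the real parameters of the hermitian tuples. I would start from the tuple produced by the previous steps, which a priori need not be hermitian, and exhibit a nearby hermitian tuple with the same property, either by a direct hermitian perturbation or by running the entire free-locus argument inside the hermitian locus, using that hermitian tuples are Zariski dense in the complex ones in the sense required by Theorem~\ref{t:intro1}.

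The main obstacle is the structural reduction in the second paragraph: turning the pointwise inequality ``$\dim\ker L(Y)\ge 2$'' into an honest reduction of the pencil size is not a routine matrix manipulation and is really the content of a Cohn-factorization / realization argument for linear pencils. I expect the realization-theoretic machinery behind Theorem~\ref{t:intro1} to supply the right technology. A secondary difficulty in the hermitian case is ensuring that the hermitian perturbation does not accidentally push the produced nonzero eigenvalue to zero or merge it with another; the proof should include a nondegeneracy check grounded in $\cA=M_d(\kk)$, for instance the observation that no nontrivial trace identity can hold on the $A_i$.
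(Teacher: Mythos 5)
Your reduction in the first paragraph is correct and matches the paper's setup: the theorem amounts to producing $Y\in\fl(L_A)$ with $\dim\ker L_A(Y)=1$, and you may assume $L_A$ is minimal of size $d$ with $\cA=M_d(\kk)$. The hermitian part of your plan also correctly identifies the mechanism the paper uses — Zariski density of hermitian tuples inside $\fl_n(L_A)$ (Proposition~\ref{p:dense}), plus Zariski-openness of the corank-one locus inside $\fl_n(L_A)$.

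The genuine gap is exactly where you yourself flag it: the claim ``if $\dim\ker L_A(Y)\ge 2$ for every $Y\in\fl(L_A)$ then the pencil can be shrunk'' is left as a heuristic. You write that this is ``not a routine matrix manipulation'' and that you ``expect the realization-theoretic machinery\ldots to supply the right technology,'' but you do not produce the reduction, and it is not obvious that such a Schur-complement/realization argument can be made to work directly; this is where the proof would have to actually be done. The paper sidesteps this entirely with a short direct argument. Since $A_1,\dots,A_g$ generate $M_d(\kk)$, choose $f\in\px_+$ with $f(A)=E_{1,1}$, a rank-one idempotent, so that $\dim\ker\bigl(I-f(A)\bigr)=1$. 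Lemma~\ref{l:poly} (the ``algebraization trick''), applied with $X=0$ and $Y=1$, produces a tuple $X'$ of matrices with
\[
\dim\ker L_A(X')=\dim\ker\bigl(L_A(0)-f(A)\otimes 1\bigr)=\dim\ker\bigl(I-E_{1,1}\bigr)=1.
\]
No contradiction argument or pencil-size reduction is needed, and in particular minimality of $L_A$ plays no role. Your instinct that the algebra $\cA=M_d(\kk)$ supplying rich polynomial images of the $A_i$ is the key nondegeneracy input is sound — but the way to exploit it is to hit a rank-one matrix unit with a noncommutative polynomial, not to rule out uniformly high corank by a minimality contradiction. As it stands, the central step of your proof is missing; identifying the $f(A)=E_{1,1}$ trick together with Lemma~\ref{l:poly} is the content of the theorem.
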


The proof of Theorem \ref{t:intro3} is given in Subsection \ref{ss:kip}.

\subsection{Reader's guide}

The paper is organized as follows. We start by introducing the basic notation and terminology of monic linear pencils, noncommutative rational functions and realizations in Section \ref{sec2}. The inclusion problem for free loci is treated in Section \ref{sec3}. Our main tools are the algebraization trick (Lemma \ref{l:poly}) and the role of the nilradical of the algebra generated by the coefficients of a monic pencil (Proposition \ref{p:nil}). The first part of the Singularit\"atstellensatz is stated in Theorem \ref{t:main}, while Theorem \ref{t:unique} asserts that minimal pencils with the same free locus are unique up to conjugation. The connection between the free locus and the semisimple algebra assigned to a pencil is further investigated in Proposition \ref{p:components} that relates irreducible components of the free locus to the Artin-Wedderburn decomposition of the corresponding semisimple algebra.

In Section \ref{sec4} we apply the preceding results to noncommutative rational functions and their domains. Corollary \ref{c:rat} solves the inclusion problem for domains of noncommutative rational functions in terms of their minimal realizations. As a consequence, Theorem \ref{t:reg} proves that every regular noncommutative rational function (in the sense of being defined everywhere) is a polynomial, which furthermore implies Douglas' lemma for noncommutative rational functions (Corollary \ref{c:douglas}). In Subsection \ref{ss:char} we introduce the notion of co-irreducible domains and derive a precise description of functions with a given domain in Proposition \ref{p:irr} and Theorem \ref{t:rat}.

Finally we focus on symmetric and hermitian pencils, which are ubiquitous in real algebraic geometry \cite{HV,NT} and optimization \cite{HKM,KPV}. Section \ref{sec5} starts by introducing the free real locus assigned to a symmetric or hermitian pencil. Theorem \ref{t:mainsh} is the $*$-analog of the Singularit\"atstellensatz, but instead of noncommutative ring theory its proof crucially relies on properties of hyperbolic polynomials \cite{Gar,Ren} and the real Nullstellensatz \cite{BCR}. Subsection \ref{ss:kip} discusses a relaxation of Kippenhahn's conjecture; its involution-free and hermitian version are resolved by Corollaries \ref{c:kipf} and \ref{c:kipq}, respectively.

\subsubsection*{Added post print}
Appendix \ref{appA}, due independently to Claudio Procesi and
\v Spela \v Spenko, presents an invariant-theoretic viewpoint of some of the
main results of the paper. We thank them for allowing us to include it here.


\linespread{1.125}

\section{Preliminaries}\label{sec2}

In this section we introduce basic notation and the main objects of our study: linear pencils and their (zero) loci, and noncommutative rational functions together with their domains.

\subsection{Basic notation}

Throughout the text let $\kk$ be a field of characteristic 0. If $\ulx=\{x_1,\dots,x_g\}$ is an alphabet, then $\mx$ denotes the free monoid over $\ulx$ and $1\in\mx$ denotes the empty word. Let $\px$ be the free $\kk$-algebra of noncommutative (nc) polynomials. By $\px_+$ we denote its subspace of nc polynomials with zero constant term. For $w\in\mx$ let $|w|\in\N$ denote the length of $w$ and $\mx_h=\{w\in\mx\colon|w|=h\}$. If $\uly$ is another alphabet and $\ulx\cap \uly=\emptyset$, then for $w\in \Langle \ulx\cup \uly\Rangle$ let $|w|_{\uly}$ denote the number of occurrences of elements from $\uly$ in $w$. Lastly, $\cyceq$ denotes the cyclic equivalence relation on words, i.e., $w_1\cyceq w_2$ if and only if there exist words $u$ and $v$ such that $w_1=uv$ and $w_2=vu$. Equivalently, $w_1$ is a cyclic permutation of $w_2$.

\subsubsection{Free locus of a linear pencil}

If $A_1,\dots,A_g\in M_d(\kk)$, then
$$L=I-\sum_{i=1}^g A_ix_i\in M_d(\px)$$
is called a {\bf monic linear pencil} of size $d$. We write $L=L_A$ if we want to emphasize which coefficients appear in $L$. The evaluation of $L$ at a point $X=(X_1,\dots,X_g)\in M_n(\kk)^g$ is defined using the (Kronecker) tensor product
$$L(X)=I\otimes I-\sum_{i=1}^g A_i\otimes X_i\in M_{nd}(\kk).$$
The {\bf free (singular) locus} of $L$ is the set
\begin{equation}\label{e:sing}
\fl(L)=\bigcup_{n\in\N}\fl_n(L),\qquad \text{where}\quad \fl_n(L)=\left\{X\in M_n(\kk)^g\colon\det(L(X))=0 \right\}.
\end{equation}
Clearly, each $\fl_n(L)$ is an algebraic subset of $M_n(\kk)^g$.

\subsection{Noncommutative rational functions}\label{ss:rat}

We introduce noncommutative rational functions using matrix evaluations of formal rational expressions following \cite{HMV,KVV2}. Originally they were defined ring-theoretically, cf. \cite{Ber,Coh}. A syntactically valid combination of nc polynomials, arithmetic operations $+,\ \cdot,\ {}^{-1}$ and parentheses $(,)$ is called a {\bf noncommutative (nc) rational expression}. The set of all nc rational expressions is 
denoted $\re$. For example, $(1+x_3^{-1}x_2)+1$, $x_1+(-x_1)$ and $0^{-1}$ are elements of $\re$.

Every polynomial $f\in\px$ can be naturally evaluated at a point $A\in M_n(\kk)^g$ by replacing $x_j$ with $A_j$ and $1$ with $I$; the result is $f(A)\in M_n(\kk)$. We can naturally extend evaluations of nc polynomials to evaluations of nc rational expressions. Given $r\in\re$, then $r(A)$ is defined in the obvious way if all inverses appearing in $r$ exist at $A$. Let $\dom_n r$ be the set of all $A\in M_n(\kk)$ such that $r$ is defined at $r$. Then the {\bf domain} of a nc rational expression $r$ is
$$\dom r=\bigcup_{n\in\N} \dom_n r$$
and $r$ is {\bf non-degenerate} if $\dom r\neq\emptyset$.

On the set of all non-degenerate nc rational expressions we define an equivalence relation $r_1\sim r_2$ if and only if $r_1(A)=r_2(A)$ for all $A\in\dom r_1\cap\dom r_2$. Then {\bf noncommutative (nc) rational functions} are the equivalence classes of non-degenerate nc rational expressions. By \cite[Proposition 2.1]{KVV2} they form a skew field denoted $\rx$. It is the universal skew field of fractions of $\px$ \cite[Section 4.5]{Coh}. For $\rr\in\rx$ let $\dom_n\rr$ be the union of $\dom_n r$ over all representatives $r\in\re$ of $\rr$. Then the {\bf domain} of a nc rational function $\rr$ is
$$\dom \rr=\bigcup_{n\in\N}\dom_n \rr.$$

\subsubsection{Realizations}

Let $\rx_0\subset \rx$ denote the local subring of nc rational functions that are regular at the origin:
$$\rx_0=\{\rr\in\rx\colon 0\in\dom\rr\}.$$
A very powerful tool for operating with elements from $\rx_0$ is realization theory. If $\rr\in\rx_0$, then there exist $d\in\N$, $\cc,\bb\in\kk^d$ and a monic linear pencil $L$ of size $d$ such that
$$\rr=\cc^tL^{-1}\bb.$$
Such a triple $(\cc,L,\bb)$ is called a {\bf realization of $\rr$} of size $d$. We refer to \cite{BR,HMV} for a good exposition on classical realization theory; also see \cite{Vol} for realizations about arbitrary matrix points which can consequently be applied to arbitrary nc rational functions.

Let us fix $\rr\in\rx_0$. In general, $\rr$ admits various realizations. A realization of $\rr$ whose size is minimal among all realizations of $\rr$ is called {\bf minimal}. The following facts comprise the importance of minimal realizations.
\begin{enumerate}
	\item Minimal realizations are unique up to similarity by \cite[Theorem 2.4]{BR}. That is, if $(\cc,L,\bb)$ and $(\cc',L',\bb')$ are minimal realizations of $\rr$ of size $d$, then there exists $P\in\GL_d(\kk)$ such that $\cc'= P^{-t}\cc$, $L'=P LP^{-1}$ and $\bb'=P\bb$.
	\item If $(\cc,L,\bb)$ is a minimal realization of $\rr$, then
	$$\dom\rr=\bigcup_{n\in\N}\{X\in M_n(\kk)^g\colon \det(L(X))\neq0\}=\fl(L)^c$$
	by \cite[Theorem 3.1]{KVV1} and \cite[Theorem 3.10]{Vol1}.
	\item By \cite[Section II.3]{BR}, there is an efficient algorithm that provides us with a minimal realization of $\rr$.
\end{enumerate}
Hence the domain of a nc rational function regular at 0 can be described as a complement of a free locus. Similar result also holds for an arbitrary rational function \cite[Corollary 5.9]{Vol}.


\section{Inclusion problem for free loci}\label{sec3}

In this section we investigate when free loci of two linear pencils are comparable. The main results are the Singularit\"atstellens\"atze \ref{t:main} and \ref{t:unique}. Theorem \ref{t:main} shows that inclusion of free loci is equivalent to the existence of a homomorphism between semisimple algebras associated to the two pencils. Theorem \ref{t:unique} proves that (under natural minimality assumptions) two pencils with the same free locus are similar, i.e., one is a conjugate of the other. Our main technical ingredient in the proofs is the algebraization trick of Subsection \ref{ss:algtrick}, which relates properties of a linear pencil to properties of the matrix algebra generated by the coefficients of the pencils.

\subsection{Algebraization trick}\label{ss:algtrick}

Lemma \ref{l:poly} will be used repeatedly in the sequel to pass from a pencil $L_A$ to the $\kk$-algebra $\cA$ generated by matrices $A_1,\dots,A_g$.

\begin{lem}\label{l:poly}
	For every $f\in\px_+$ and $X_i,Y\in M_n(\kk)$ there exist $N\in\N$ and $X'_i\in M_N(\kk)$ such that
	\begin{equation}\label{e:ker}
	\dim\ker(L_A(X)-f(A)\otimes Y)=\dim\ker L_A(X')
	\end{equation}
	for all $d\in\N$ and $A_i\in M_d(\kk)$.
\end{lem}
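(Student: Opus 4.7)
The plan is to exhibit an enlarged monic pencil $L_A(X')$ such that $L_A(X) - f(A) \otimes Y$ is its Schur complement with respect to a block that is invertible uniformly in $A$; ranks, and hence kernel dimensions, will then differ by a fixed constant equal to the pivot size. By linearity in $f$, I would treat each monomial in $f = \sum_{w} c_w w$ separately, appending one disjoint chain of block rows and columns per monomial to a common upper-left block $L_A(X)$. A length-$1$ monomial $c\cdot x_i$ is absorbed by replacing $X_i$ with $X_i + cY$ and needs no enlargement.

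Fix then a single monomial $w = x_{i_1} \cdots x_{i_h}$ of length $h \geq 2$ with coefficient $c$, and consider the block matrix
\begin{equation*}
M = \begin{pmatrix}
L_A(X) & -A_{i_1}\otimes (cY) & & & \\
& I & -A_{i_2}\otimes I & & \\
& & \ddots & \ddots & \\
& & & I & -A_{i_{h-1}}\otimes I \\
-A_{i_h}\otimes I & & & & I
\end{pmatrix}
\end{equation*}
of size $hnd$, whose displayed blocks have size $nd \times nd$ and whose unmarked entries are $0$. Every block depends on $A$ only through terms of the form $-A_j \otimes B$ for a fixed $B \in M_n(\kk)$, with constant part $I_{hnd}$; collecting these $A_j$-coefficients into a block matrix in $M_{hn}(\kk)$ yields a tuple $X' \in M_{hn}(\kk)^g$ with $M = L_A(X')$, and crucially $X'$ depends only on $X$, $Y$, $c$, $w$, not on $A$.

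Iterating the Schur complement along the invertible lower-right $(h-1) \times (h-1)$ subblock, which is block upper triangular with $I$ on the diagonal, collapses the chain by the mixed-product property $(A\otimes B)(C\otimes D) = AC \otimes BD$: eliminating $M_{hh}$ turns $-A_{i_h}\otimes I$ into $-A_{i_{h-1}}A_{i_h}\otimes I$ one row up, and continuing upwards one recovers $L_A(X) - cA_{i_1}\cdots A_{i_h}\otimes Y = L_A(X) - c w(A)\otimes Y$ as the final Schur complement in the $(1,1)$ position. Since the pivot has size $(h-1)nd$ and is $A$-independent, $\rk M = (h-1)nd + \rk(L_A(X) - cw(A)\otimes Y)$, whence $\dim\ker M = \dim\ker(L_A(X) - cw(A)\otimes Y)$. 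For a general $f$, stacking disjoint chains (one per monomial of length $\geq 2$) and absorbing the length-$1$ part produces a single pencil $L_A(X')$ of size $Nd$ with $N = n + \sum_{|w_k| \geq 2}(|w_k|-1)\,n$, whose joint Schur complement equals $L_A(X) - f(A)\otimes Y$.

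The main obstacle is bookkeeping rather than conceptual: one must check that chains for distinct monomials never compete for the same block in the pencil decomposition (they occupy disjoint row/column ranges, so they do not), and that the iterated Schur complement really produces $-c w(A)\otimes Y$ exactly, which is a short induction on $h$ with the length-$2$ case as the base.
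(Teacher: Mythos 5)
Your proof is correct and rests on the same engine as the paper's, namely the Schur-complement identity $\dim\ker(M-M_2 M_1)=\dim\ker\begin{pmatrix}I & M_1\\ M_2 & M\end{pmatrix}$, but the route is genuinely different. The paper proves a strengthened statement (allowing auxiliary coefficient matrices $C_k$ and variables $z_k$) precisely so that linear combinations $\alpha f+\beta g$ can be handled by attaching $f(A)$ as a new coefficient, pushing the problem through the induction twice; the monomial case is then done one letter at a time, doubling the ambient size at every step. You instead build the entire chain of length $|w|$ for each monomial in a single block matrix and collapse it by iterated Schur complements, and you handle the sum by attaching one disjoint chain per monomial to a common $(1,1)$ block so that the pivot is a block-diagonal, uniformly invertible matrix. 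Your observation that the resulting pencil $L_A(X')$ depends only on $X$, $Y$, and $f$ (not on $A$) is exactly the point that makes the construction usable, and your bookkeeping checks out: the chains occupy disjoint row/column ranges away from the shared corner, the Schur complement really does reproduce $L_A(X)-\sum_k c_k w_k(A)\otimes Y$, the length-one part is absorbed into $X$, and the pivot size forces equality of kernel dimensions. One extra payoff of your direct construction is quantitative: you get $N=n\bigl(1+\sum_{|w_k|\ge 2}(|w_k|-1)\bigr)$, which is linear in $n$ and in the degree and number of terms of $f$, whereas the paper's nested recursion only gives a bound exponential in $\deg f$, as it itself notes after the lemma.
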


\begin{proof}
We prove a slightly stronger statement: for every $f\in\px_+$, $h\in\N$ and $X_1,\dots,X_g,Z_1,\dots,Z_h,Y\in M_n(\kk)$ there exist $N\in\N$ and $X_1',\dots,X_g',Z_1',\dots,Z_h'\in M_N(\kk)$ such that
\begin{equation}\label{e:ker1}
\dim\ker(L_{A,C}(X,Z)-f(A)\otimes Y)=\dim\ker L_{A,C}(X',Z')
\end{equation}
for all $d\in\N$ and $A_1,\dots,A_g,C_1,\dots,C_h\in M_d(\kk)$, where $L_{A,C}(x,z)=I-\sum_iA_ix_i-\sum_kC_kz_k$.

First observe that
$$\begin{pmatrix}u_1\\ u_2\end{pmatrix}\in \ker\begin{pmatrix}I& M_1\\ M_2& M\end{pmatrix} \quad \iff \quad
u_2\in \ker(M-M_2M_1),\ u_1=-M_1u_2$$
for all matrices $M,M_1,M_2$ of consistent sizes and therefore
\begin{equation}\label{e:e0}
\dim\ker(M-M_2M_1)=\dim\ker\begin{pmatrix}I& M_1\\ M_2& M\end{pmatrix}.
\end{equation}
If the stronger statement holds for $f$ and $g$, then it also holds for $\alpha f+\beta g$ for $\alpha,\beta\in\kk$ since
\begin{align*}
	&\dim\ker(L_{A,C}(X,Z)-(\alpha f+\beta g)(A)\otimes Y) \\
	=&\dim\ker(L_{A,C,f(A)}(X,Z,\alpha Y)-g(A)\otimes \beta Y)\\
	=&\dim\ker L_{A,C,f(A)}(X',Z',Y')\\
	=&\dim\ker(L_{A,C}(X',Z')-f(A)\otimes Y')\\
	=&\dim\ker L_{A,C}(X'',Z'')
\end{align*}
for appropriate $X_i',Z_j',Y'\in M_{N_1}(\kk)$ and $X_i'',Z_j''\in M_{N_2}(\kk)$ that exist by assumption. Hence it suffices to establish the statement for $f=w\in \mx\setminus\{1\}$. We prove \eqref{e:ker1} by induction on $|w|$. The case $|w|=1$ is clear, so assume that \eqref{e:ker1} holds for all words of length $\ell\ge1$. If $w=x_jv$ for $|v|=\ell$, then
\begin{align*}
	&\dim\ker(L_{A,C}(X,Z)-w(A)\otimes Y)\\
	=&\dim\ker\begin{pmatrix}I\otimes I & -v(A)\otimes I \\ -A_j\otimes Y & L_{A,C}(X,Z)\end{pmatrix}\\
	=&\dim\ker\left(
	L_{A,C}\left( \begin{pmatrix}0 & 0\\0 & X\end{pmatrix},\begin{pmatrix}0 & 0\\0 & Z\end{pmatrix}\right)
	-A_j\otimes \begin{pmatrix}0 & 0\\Y & 0\end{pmatrix}
	-v(A)\otimes \begin{pmatrix}0 & I\\0 & 0\end{pmatrix}
	\right)\\
	=&\dim\ker L_{A,C}(X',Z')
\end{align*}
for some $X_i',Z_j'\in M_N(\kk)$ by \eqref{e:e0}, conjugation with an invertible matrix, and the induction hypothesis.
\end{proof}

As it follows from the proof, the number $N$ in the statement of Lemma \ref{l:poly} can be bounded by a function which is polynomial in $n$ and exponential in the degree of $f$ and number of terms in $f$.

\begin{cor}\label{c:poly}
	If $\fl(L_A)\subseteq\fl(L_B)$, then $\fl(L_A-f(A)y)\subseteq\fl(L_B-f(B)y)$ for every $f\in\px_+$.
\end{cor}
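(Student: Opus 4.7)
The plan is to reduce directly to the hypothesis $\fl(L_A)\subseteq\fl(L_B)$ by applying Lemma \ref{l:poly} twice, exploiting the crucial fact that in the statement of the lemma the tuple $X'_1,\dots,X'_g\in M_N(\kk)$ and the integer $N$ are chosen \emph{uniformly in the coefficient matrices}. That is, having fixed $f$, $X$ and $Y$, the same $N$ and $X'_i$ work simultaneously for every choice of $d$ and every $d\times d$ coefficient tuple.

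Concretely, I would fix a point $(X,Y)=(X_1,\dots,X_g,Y)\in M_n(\kk)^{g+1}$ lying in $\fl(L_A-f(A)y)$, which by definition means that
\[
L_A(X)-f(A)\otimes Y
\]
is singular. By Lemma \ref{l:poly} applied to $f\in\px_+$ and the tuple $(X_1,\dots,X_g,Y)$, there exist $N\in\N$ and matrices $X'_1,\dots,X'_g\in M_N(\kk)$, depending only on $f$ and $(X,Y)$, such that for every $d$ and every $d\times d$ tuple $C=(C_1,\dots,C_g)$ one has
\[
\dim\ker\bigl(L_C(X)-f(C)\otimes Y\bigr)=\dim\ker L_C(X').
\]
Applying this identity with $C=A$ yields $\dim\ker L_A(X')>0$, so $X'\in\fl(L_A)$; by the hypothesis $\fl(L_A)\subseteq\fl(L_B)$, we then also have $X'\in\fl(L_B)$. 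Applying the same identity with $C=B$ (using the same $N$ and $X'_i$, which is precisely the point) gives
\[
\dim\ker\bigl(L_B(X)-f(B)\otimes Y\bigr)=\dim\ker L_B(X')>0,
\]
so $(X,Y)\in\fl(L_B-f(B)y)$, as required.

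There is essentially no obstacle here: the only subtle point to be careful about is that Lemma \ref{l:poly} produces the auxiliary tuple $X'$ depending on $(f,X,Y)$ but \emph{independent} of the coefficient tuple, which is exactly what lets us feed in $A$ and $B$ successively. If this uniformity were not available, one would be forced to handle the two pencils separately and the reduction would break down. Since the lemma is proved by an explicit inductive construction in which the new variables are assembled from $X_i,Y$ and block units (no reference to the coefficients), this uniformity comes for free, and the corollary follows in a few lines.
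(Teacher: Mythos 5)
Your proof is correct and is essentially identical to the paper's: both fix $(X,Y)\in\fl(L_A-f(A)y)$, invoke Lemma \ref{l:poly} to produce $X'$ depending only on $f,X,Y$ (not on the coefficient tuple), deduce $X'\in\fl(L_A)\subseteq\fl(L_B)$, and then reuse the same $X'$ with $B$ in place of $A$. The uniformity of $X'$ in the coefficient tuple is exactly the point the paper's one-line proof emphasizes.
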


\begin{proof}
	If $(X,Y)\in \fl(L_A-f(A)y)$, let $X'$ be as in Lemma \ref{l:poly}. Then $X'\in\fl(L_A)$ and therefore $X'\in \fl(L_B)$ by assumption, so $(X,Y)\in\fl (L_B-f(B)y)$ since the choice of $X'$ is independent of the pencils $L_A$ and $L_B$.
\end{proof}

\subsection{Jointly nilpotent coefficients}

The question whether an evaluation of a pencil $L_A(x)$ is invertible might be independent of some of the variables in $\ulx$. In this subsection we show that in this case their corresponding coefficients in $L_A$ generate a nilpotent ideal. Moreover, we provide explicit polynomial bounds originating from the theory of polynomial trace identities \cite{Pro} and bounds on lengths of generating sets of matrix subalgebras \cite{Pap} to check whether this happens.

Let $\cA$ be a (possibly non-unital) finite-dimensional $\kk$-algebra. If $S\subseteq \cA$ is its generating set, then we define the {\bf length} of $S$ as
$$
\ell(S)=\min\left\{l\in\N\colon \bigcup_{j=1}^lS^j \ \text{linearly spans }\cA\right\}.
$$
Here $S^j$ is the set of all products of $j$ elements of $S$. Denote
$$\lambda(d)=\left\{\begin{array}{ll}
1 & d=1, \\
\left\lceil d\sqrt{\frac{2d^2}{d-1}+\frac{1}{4}}+\frac{d}{2}-2 \right\rceil& d\ge2.
\end{array}\right.$$
By \cite[Theorem 3.1]{Pap} we have $\ell(S)\le \lambda(d)\approx \sqrt2 d^{3/2}$ for every generating set $S$ of $\cA\subseteq M_d(\kk)$.

In the sequel we also require the following notion. For $g,n\in\N$ let
$$\kk\left[\ulxx\right]=\kk\left[\gx_{\imath\jmath}^{(i)}\colon 1\le i\le g,\,1\le\imath,\jmath\le n\right]$$
be the ring of polynomials in $gn^2$ commutative indeterminates. The distinguished matrices
$$\gX_i=\left(\gx_{\imath\jmath}^{(i)}\right)_{\imath\jmath}\in M_n\left(\kk\left[\ulxx\right]\right)$$
are called the {\bf generic $n\times n$ matrices} \cite[Section 6.7]{Bre}.

\begin{prop}\label{p:nil}
	Let $\cA\subseteq M_d(\kk)$ be the $\kk$-algebra generated by
	$$A_1,\dots,A_g,N_1,\dots,N_h\in M_d(\kk),$$
	and let $\cN\subseteq \cA$ be the ideal generated by $N_1,\dots,N_h$. If $m\ge \lambda(d)$ and
	\begin{equation}\label{e:nil}
	\det\left(L_A(X)-\sum_jN_j\otimes Y_j\right)=\det(L_A(X))
	\end{equation}
	holds for all $X_i,Y_j\in M_m(\kk)$, then $\cN$ is a nilpotent ideal in $\cA$.
	
	Conversely, if $\cN$ is nilpotent, \eqref{e:nil} holds for all $X_i,Y_j\in M_n(\kk)$ and $n\in\N$.
\end{prop}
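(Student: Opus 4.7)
The plan is to translate the determinantal equality \eqref{e:nil} into a statement about the trace form on $\cA$, and then to invoke the standard fact that in characteristic zero the kernel of the trace form on a finite-dimensional subalgebra of $M_d(\kk)$ is exactly its Jacobson radical.

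For the easy converse I would pass to the rational function field $K=\kk(\gx,\gY)$ on the entries of generic $n\times n$ matrices $\gX,\gY$. The element $L_A(\gX)\in\cA\otimes M_n(K)\subseteq M_{dn}(K)$ is invertible in $M_{dn}(K)$ (since $\det L_A(0)=1$), and because $\cA\otimes M_n(K)$ is a finite-dimensional $K$-algebra the inverse $L_A(\gX)^{-1}$ already lives in $\cA\otimes M_n(K)$. Since $\cN$ is a two-sided ideal of $\cA$, the product $M:=L_A(\gX)^{-1}\bigl(\sum_jN_j\otimes\gY_j\bigr)$ lies in $\cN\otimes M_n(K)$, and is therefore nilpotent once $\cN$ is. Hence $\det(I-M)=1$, and multiplying through by $\det L_A(\gX)$ gives \eqref{e:nil} at the generic point; polynomial-identity density lifts this to every choice of $X,Y\in M_n(\kk)^g\times M_n(\kk)^h$.

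For the forward direction I would reverse this chain: the hypothesis forces $M$ to be nilpotent over $K$, so $\tr(M^k)=0$ for every $k\geq 1$. Expanding $L_A(\gX)^{-1}=\sum_ww(A)\otimes w(\gX)$ as a formal power series around the origin, the vanishing of $\tr(M^k)$ unpacks to
\[
\sum_{(w_l,j_l)}\tr\bigl(w_1(A)N_{j_1}\cdots w_k(A)N_{j_k}\bigr)\,\tr\bigl(w_1(\gX)\gY_{j_1}\cdots w_k(\gX)\gY_{j_k}\bigr)=0,
\]
with both trace factors invariant under cyclic rotation of the tuple $((w_l,j_l))$. By Procesi's theorem on trace identities for $m\times m$ matrices \cite{Pro}, distinct cyclic classes of monomials of sufficiently small degree produce linearly independent trace functions on $M_m^{g+h}$, while Paz's bound $\ell(\cdot)\leq\lambda(d)$ from \cite{Pap} limits attention to products of length at most $\lambda(d)$ inside the finite-dimensional $\cA\subseteq M_d(\kk)$. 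The hypothesis $m\geq\lambda(d)$ aligns these two inputs, and $\mathrm{char}\,\kk=0$ lets me divide out the cyclic orbit sizes to extract $\tr\bigl(w_1(A)N_{j_1}\cdots w_k(A)N_{j_k}\bigr)=0$ for every admissible tuple.

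Cyclicity of the trace rewrites every product $N_ja$ (with $a$ in a spanning set of $\cA$ consisting of length-$\leq\lambda(d)$ words in $\{A_i,N_j\}$) in the above normal form, whence $\tr(N_j\,a)=0$ for every $a\in\cA$. The Wedderburn-Malcev splitting $\cA=\bar{\cA}\oplus\rad\cA$ (valid in characteristic zero) realizes the semisimple quotient as a subalgebra of $M_d(\kk)$, on which the trace form is non-degenerate; since it already vanishes on $\rad\cA$ (nilpotent elements have vanishing trace, and multiplying into $\rad\cA$ stays there), this identifies $\rad\cA$ with the kernel of the trace form on $\cA$. Hence each $N_j\in\rad\cA$, so $\cN\subseteq\rad\cA$ is nilpotent. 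The main obstacle I foresee is the combinatorial bookkeeping in the middle step, making sure Paz's spanning bound and Procesi's non-identity bound align cleanly under the single hypothesis $m\geq\lambda(d)$.
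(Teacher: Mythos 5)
Your proof is correct and the forward direction follows the paper's argument essentially step for step: pass to generic matrices, deduce nilpotency of $M=L_A(\gX)^{-1}\bigl(\sum_jN_j\otimes\gY_j\bigr)$, expand $\tr(M^k)$ into a pure trace identity, kill it with Procesi's bound for $m\times m$ matrices, use the Paz/Pappacena length bound $\lambda(d)$ to conclude $\tr$ vanishes on $\cN$, and finish with the standard ``kernel of trace form $=$ radical'' fact (the paper leaves this last implication implicit; your Wedderburn--Malcev packaging is a nice way to make it explicit). The one place you genuinely diverge is the converse. The paper invokes Burnside's theorem to produce a flag $U_1\subseteq U_1\oplus U_2\subseteq\cdots$ over $\kkb$ making $\cA$ block upper triangular and $\cN$ strictly block upper triangular, then reads off the determinant equality from the block structure. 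You instead observe that $M$ lies in $\cN\otimes M_n(K)$, which is a nilpotent algebra once $\cN$ is, so $\det(I-M)=1$ directly; this is shorter and avoids the structure theory entirely, at the cost of one technical point you should tighten: $L_A(\gX)^{-1}$ lives in the \emph{unitization} $\cA^\sharp\otimes M_n(K)$, not in $\cA\otimes M_n(K)$ as written (the identity need not lie in $\cA$), but since $\cN$ remains an ideal of $\cA^\sharp$ the product still lands in $\cN\otimes M_n(K)$ and the argument goes through. One more small point worth flagging: when you pass from $\tr\bigl(w(A,N)\bigr)=0$ for $|w|\le m$ to $\tr(N_ja)=0$ for all $a\in\cA$, you write $a$ as a combination of words of length $\le\lambda(d)$ and then use $|N_ja|\le\lambda(d)+1$; under the stated hypothesis $m\ge\lambda(d)$ this is off by one (you need $m\ge\lambda(d)+1$), though this slack is also present in the paper's terser spanning claim for $\cN$ and does not affect the substance of the result.
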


\begin{proof}
Assume \eqref{e:nil} holds. Let $\gX_i$ be generic $m\times m$ matrices. As a matrix over the ring of formal power series $\kk[[\ulxx]]$, $L_A(\gX)$ is invertible and
\begin{equation}\label{e:neumann}
L_A(\gX)^{-1}=\sum_{w\in \mx} w(A)\otimes w(\gX)
\end{equation}
by the Neumann series expansion. Then \eqref{e:nil} implies
$$\det\left(I\otimes I-\left(\sum_jN_j\otimes Y_j\right)L_A(\gX)^{-1}\right)=1$$
for every $Y_i\in M_m(\kk)$. In particular, for
$$p(t)=\det\left(I\otimes I-t\left(\sum_jN_j\otimes Y_j\right)L_A(\gX)^{-1}\right)\in \kk[[\ulxx]][t]$$
we have $p(t)=1$, so $(\sum_jN_j\otimes Y_j)L_A(\gX)^{-1}$ does not have nonzero eigenvalues and is therefore a nilpotent matrix. Hence $(\sum_jN_j\otimes \gY_j)L_A(\gX)^{-1}$ is nilpotent, where $\gY_j$ are generic $m\times m$ matrices, so
\begin{align*}
	0
	&=\tr\left(\left(\left(\sum_jN_j\otimes \gY_j\right)\left(\sum_{w\in \mx} w(A)\otimes w(\gX)\right)\right)^\ell\right) \\
	&=\tr\left(\sum_{w\in \uly\!\mx\cdots \uly\!\mx,\,|w|_Y=\ell}w(A,N)\otimes w(\gX,\gY)\right) \\
	&=\sum_{w\in \uly\!\mx\cdots \uly\!\mx,\,|w|_{\uly}=\ell}\tr(w(A,N))\tr(w(\gX,\gY)) \\
	&=\sum_{[w]\in \Langle \ulx\cup \uly \Rangle\!/\cyceqs,\,|w|_{\uly}=\ell} \mu_w\tr(w(A,N))\tr(w(\gX,\gY))
\end{align*}
for every $\ell\in\N$, where $0<\mu_w=|[w]\cap \uly\!\mx\cdots \uly\!\mx|$ for $w\in \Langle \ulx\cup \uly\Rangle$ with $|w|_{\uly}=\ell$. Here $[w]$ denotes the equivalence class of $w$ with respect to $\cyceq$. For every $h\in\N$, the pure trace polynomial
$$p_h=\sum_{[w]\in \Langle \ulx\cup \uly \Rangle_h\!/\cyceqs,\,|w|_{\uly}>0} \mu_w\tr(w(A,N))\tr(w)$$
of degree $h$ therefore vanishes on all tuples of $m\times m$ matrices. By \cite[Theorem 4.5, Proposition 8.3]{Pro} we have $p_h=0$ for all $h\le m$. Therefore $\tr(w(A,N))=0$ for every $w\in \Langle \ulx\cup \uly\Rangle_h$ with $|w|_{\uly}>0$ and $h\le m$. Since $m\ge \lambda(d)$, the discussion above implies that
$$\{w(A,N)\colon 1\le|w|\le m,\,|w|_{\uly}>0\}$$
linearly spans $\cN$. Therefore $\tr(w(A,N))=0$ for every $w\in \Langle \ulx\cup \uly \Rangle$ with $|w|_{\uly}>0$, hence $\cN\subseteq \cA$ is a nilpotent ideal.

Conversely, suppose $\cN$ is nilpotent. Let $\kkb$ be the algebraic closure of $\kk$. Burnside's theorem on the existence of invariant subspaces \cite[Corollary 5.23]{Bre} applied to $\cA\otimes_{\kk}\kkb$ yields a vector space decomposition
\begin{equation}\label{e:decomp}
\kkb^d=U_1\oplus \cdots \oplus U_s
\end{equation}
such that $\cA U_k\subseteq U_1\oplus\cdots\oplus U_k$ and $\pi_k(\cA\otimes_{\kk}\kkb)\iota_k$ is either $\{0\}$ or $\End_{\kkb}(U_k)$, where $\iota_k:U_k\to \kkb^d$ and $\pi_k:\kkb^d\to U_k$ are the canonical inclusion and projection, respectively. We claim that $\cN U_k\subseteq U_1\oplus\cdots\oplus U_{k-1}$; indeed, if $\pi_k(\cN U_k)\cap U_k\neq\{0\}$, then the simplicity of $\End_{\kkb}(U_k)$ implies $I\in\pi_k(\cN\otimes_{\kk}\kkb)\iota_k$, which is a contradiction since $\cN\otimes_{\kk}\kkb$ is nilpotent.

Because the determinant of a block-upper-triangular matrix is equal to the product of determinants of its diagonal blocks, the decomposition \eqref{e:decomp} and the structure of the Kronecker product imply 
$$\det\left(L_A(X)-\sum_jN_j\otimes Y_j\right)=\det(L_A(X))$$
for all $X_i,Y_j\in M_n(\kk)$ and all $n\in\N$.
\end{proof}

\begin{cor}
If $L$ is a monic linear pencil, then $L(X)$ is invertible for all matrix tuples $X$ if and only if the coefficients of $L$ are jointly nilpotent.
\end{cor}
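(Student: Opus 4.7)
The plan is to deduce this directly from Proposition~\ref{p:nil} by specialising to the case where no ``$A$''-coefficients are present. Write $L=I-\sum_{i=1}^g A_ix_i$ of size $d$, and let $\cA\subseteq M_d(\kk)$ be the (non-unital) algebra generated by $A_1,\dots,A_g$.

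For the forward implication, assume $L(X)$ is invertible for every matrix tuple $X$. I would first upgrade this to the sharper statement that $\det L(X)=1$ identically. The argument is the standard polynomial-nonvanishing trick: for a fixed tuple $X$ the polynomial $p(t)=\det L(tX)\in\kk[t]$ satisfies $p(0)=1$ and vanishes at no $t\in\kk$, and since $\kk$ has characteristic $0$ hence is infinite, $p$ must be the constant polynomial $1$; evaluating at $t=1$ gives $\det L(X)=1$.

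With this identity in hand, I invoke Proposition~\ref{p:nil} in the degenerate regime where the ``$A$''-pencil of that proposition is absent (so $L_A(X)$ collapses to $I$) and the ``$N$''-coefficients are the present $A_1,\dots,A_g$. Condition~\eqref{e:nil} then reduces precisely to $\det(I-\sum_iA_i\otimes Y_i)=1$ for all $Y_i\in M_m(\kk)$, which I have just verified for every $m$---in particular for $m=\lambda(d)$ as the proposition requires. The proposition then asserts that the ideal of $\cA$ generated by $A_1,\dots,A_g$ is nilpotent; but in this specialization that ideal is all of $\cA$, so the $A_i$ are jointly nilpotent.

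The converse is simply the other half of the same specialisation: if $\cA$ is nilpotent, the second part of Proposition~\ref{p:nil} yields $\det L(X)=1$ for all tuples $X$ and all sizes $n$, so $L(X)$ is everywhere invertible. The only even mildly nontrivial step is the polynomial-nonvanishing upgrade from ``$L(X)$ nowhere singular'' to ``$\det L(X)$ identically $1$''; everything else is a direct reading of Proposition~\ref{p:nil} in a degenerate case, so I expect no further obstacles.
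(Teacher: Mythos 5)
Your overall strategy — specialize Proposition~\ref{p:nil} to the degenerate case with no ``$A$''-coefficients, so that the coefficients of $L$ play the role of the $N_j$'s and condition~\eqref{e:nil} collapses to $\det L(Y)=1$ — is exactly right, and it is the route the paper intends (the corollary has no written proof; compare the first paragraph of the proof of Theorem~\ref{t:reg}, which runs this specialization explicitly). The converse direction is also fine as you state it.

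However, your ``polynomial-nonvanishing trick'' for upgrading ``$L(X)$ invertible for all $X$'' to ``$\det L(X)=1$'' is incorrect over a general field $\kk$ of characteristic~$0$. You claim that since $\kk$ is infinite, a univariate polynomial $p\in\kk[t]$ with $p(0)=1$ that vanishes at no $t\in\kk$ must be the constant~$1$. That is false: $t^2+1$ over $\R$ or $\mathbb Q$, or $t^2-2$ over $\mathbb Q$, are nonconstant and nowhere vanishing on the base field. (Your argument would be valid if $\kk$ were algebraically closed, but the paper does not assume that.) The correct upgrade is the matrix-level argument of Remark~\ref{r:const}: fix $X\in M_n(\kk)^g$ and set $N=\sum_i A_i\otimes X_i$; for any $T\in M_k(\kk)$ the matrix $I\otimes I - T\otimes N$ equals, after permuting tensor factors, $L(X')$ with $X'_i = T\otimes X_i$, a legitimate matrix tuple, so it is invertible by hypothesis. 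By Remark~\ref{r:const} this nonvanishing over all matrix substitutions $T$ forces $N$ to be nilpotent, whence $\det L(X)=\det(I-N)=1$. With this repair the rest of your argument goes through unchanged.
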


Of course, just assuming that $L(\alpha)$ is invertible for all scalar tuples $\alpha\in\kk^g$ does not imply that coefficients of $L$ are jointly nilpotent. For example, if
$$L=I-\begin{pmatrix}0&1&0\\0&0&0\\1&0&0\end{pmatrix}x_1-\begin{pmatrix}0&0&-1\\1&0&0\\0&0&0\end{pmatrix}x_2,$$
then every linear combination of the coefficients of $L$ is nilpotent and hence $\fl_1(L)=\emptyset$, but the coefficients are not jointly nilpotent. For an investigation of linear spaces of nilpotent matrices see e.g. \cite{MOR}.

\subsection{Singularit\"atstellensatz}\label{ss:singsatz}

This subsection contains the main result of this section. Theorem \ref{t:main} translates the inclusion between two free loci $\fl(L_A)\subseteq\fl(L_B)$ into a purely algebraic statement about algebras generated by the matrices $A_i$ and $B_i$.

For a (possibly non-unital) finite-dimensional $\kk$-algebra $\cR$ let $\rad \cR$ be its largest nilpotent ideal; we call it the (nil)radical of $\cR$. If $\cR\neq\rad\cR$, then $\cR/\rad\cR$ is semiprime and hence semisimple \cite[Theorem 2.65]{Bre}. Note that such a ring contains a multiplicative identity $\one$ and that an epimorphism of unital rings preserves the identity.

\begin{rem}\label{r:const}
Let $N\in M_n(\kk)$ and consider $p=\det(I-tN)\in\kk[t]$. Then $N$ is nilpotent if and only if $p=1$. This is furthermore equivalent to
$$p(T)=\det\left(I\otimes I-T\otimes N\right)\neq0$$
for all $T\in M_n(\kk)$ because the companion matrix associated to $p$ is of size $\deg p\le n$. If $\kk$ is an algebraically closed field or a real closed field, then it of course suffices to test $p(T)\neq0$ for all $T\in \kk$ or $T\in M_2(\kk)$, respectively.
\end{rem}

\begin{thm}[Singularit\"atstellensatz]\label{t:main}
	Let $\cA\subseteq M_d(\kk)$ be the subalgebra generated by $A_1,\dots, A_g$ and let $\cB\subseteq M_e(\kk)$ be the subalgebra generated by $B_1,\dots, B_g$. Then $\fl(L_A)\subseteq\fl(L_B)$ if and only if there exists a homomorphism of $\kk$-algebras $\cB / \rad\cB \to \cA / \rad\cA$ induced by $B_i\mapsto A_i$.
\end{thm}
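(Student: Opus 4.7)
The plan is to reduce both implications to the semisimple case via Wedderburn--Malcev and Proposition~\ref{p:nil}, then handle the two directions separately. By Wedderburn--Malcev fix semisimple subalgebras $\cA_s\subseteq\cA$ and $\cB_s\subseteq\cB$ complementary to $\rad\cA$ and $\rad\cB$, so that $\cA_s\cong\cA/\rad\cA$ and $\cB_s\cong\cB/\rad\cB$ canonically. Decompose $A_i=A_i^s+N_i^A$ with $N_i^A\in\rad\cA$, and likewise $B_i=B_i^s+N_i^B$. The ideal of $\cA$ generated by the $N_i^A$'s lies in $\rad\cA$ and is therefore nilpotent, so Proposition~\ref{p:nil}---applied to the pencil $L_{A^s}$ with the extra nilpotent coefficients $N_i^A$ and specialized at $Y_i=X_i$---yields $\det L_A=\det L_{A^s}$, and thus $\fl(L_A)=\fl(L_{A^s})$; analogously $\fl(L_B)=\fl(L_{B^s})$. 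Since a homomorphism $\cB/\rad\cB\to\cA/\rad\cA$ induced by $\bar B_i\mapsto\bar A_i$ is equivalent data to a homomorphism $\cB_s\to\cA_s$ induced by $B_i^s\mapsto A_i^s$, we may assume from now on that $\cA$ and $\cB$ are semisimple.

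For the $(\Leftarrow)$ direction, given a homomorphism $\varphi\colon\cB\to\cA$ with $\varphi(B_i)=A_i$, surjectivity (from the $A_i$'s generating $\cA$) combined with semisimplicity yields a decomposition $\cB=\cC\oplus\ker\varphi$ with $\varphi|_\cC$ an isomorphism onto $\cA$. The associated central idempotents split the $\cB$-module $\kk^e$ into $\cB$-invariant subspaces, making $L_B$ block-diagonal. One block, over $\cC$, has the same free locus as $L_A$: the free locus of a pencil over a semisimple algebra depends only on the abstract pair of algebra and generating tuple, because faithful semisimple representations of the same algebra differ only in the multiplicities of simple summands. Therefore $\fl(L_A)\subseteq\fl(L_B)$.

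For the $(\Rightarrow)$ direction, assume $\fl(L_A)\subseteq\fl(L_B)$; it is enough to show that $f(B)=0$ implies $f(A)=0$ for every $f\in\px$. Fix such an $f$. For any $g\in\px_+$ one has $fg\in\px_+$ and $(fg)(B)=f(B)g(B)=0$, so Corollary~\ref{c:poly} gives
$$\fl\bigl(L_A-(fg)(A)\,y\bigr)\ \subseteq\ \fl\bigl(L_B-(fg)(B)\,y\bigr)\ =\ \{(X,Y):X\in\fl(L_B)\}\,.$$
Iterating with $g$ ranging over a basis $\{g_l\}_{l=1}^r$ of $\px_+$ modulo $\ker(\px\to\cA)$ gives the analogous inclusion for the vector-valued modification $L_A-\sum_l(fg_l)(A)\,y_l$. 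If this containment could be sharpened to equality with $\{(X,Y_l):X\in\fl(L_A)\}$, then Proposition~\ref{p:nil} would force the ideal of $\cA$ generated by the $(fg_l)(A)$'s to be nilpotent and therefore zero in the semisimple $\cA$, whence $f(A)=0$ (after a brief separate argument comparing constant terms across $\cA$ and $\cB$).

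The main obstacle is precisely this final sharpening, since a priori the inclusion into the $\fl(L_B)$-cylinder obtained from Corollary~\ref{c:poly} is strictly weaker than the desired inclusion into the $\fl(L_A)$-cylinder. I would bridge the gap by analyzing the irreducible components of $\fl(L_A)$ and $\fl(L_B)$ in terms of the Wedderburn decompositions $\cA=\prod_j\cA_j$ and $\cB=\prod_k\cB_k$ (in the spirit of the later Proposition~\ref{p:components}), and then promoting each inclusion of irreducible hypersurfaces $\fl(L_{A_j})\subseteq\fl(L_{B_{k(j)}})$ into an isomorphism of simple algebras $\cB_{k(j)}\xrightarrow{\sim}\cA_j$ sending $B_i^{(k(j))}\mapsto A_i^{(j)}$ (a rigidity statement akin to the later Theorem~\ref{t:unique}). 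Assembling these component isomorphisms, with the unmatched simple summands of $\cB$ sent to zero, then produces the desired $\varphi\colon\cB\to\cA$ with $\varphi(B_i)=A_i$.
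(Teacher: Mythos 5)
Your reduction to the semisimple case via Wedderburn--Malcev and the converse half of Proposition~\ref{p:nil} is correct and is a clean preprocessing step the paper does not do; it lets you work with genuine homomorphisms $\cB\to\cA$ rather than maps between quotients. Your $(\Leftarrow)$ argument is also correct and genuinely different: the paper's proof of $(\Leftarrow)$ is purely algebraic (Cayley--Hamilton applied to $L_B(X)$, a change of variable $q(t)=p(1-t)-p(1)$, and the observation that any lift of $(1+q(1))\one\otimes I$ from $(\cA/\rad\cA)\otimes M_n(\kk)$ to $M_{nd}(\kk)$ differs from $(1+q(1))I\otimes I$ by a nilpotent), whereas yours decomposes the $\cB$-module $\kk^e$ by central idempotents and invokes the fact that the determinant of a semisimple pencil factors through the abstract algebra-plus-tuple data; both are complete for this direction.

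However, the $(\Rightarrow)$ direction is where the real content lies, and there your argument has the gap you yourself flag: from $\fl\bigl(L_A-(fg)(A)\,y\bigr)\subseteq\{(X,Y)\colon X\in\fl(L_B)\}$ you cannot conclude $y$-independence of the left side, since it may strictly contain $\{(X,Y)\colon X\in\fl(L_A)\}$ while still sitting inside the (larger) $\fl(L_B)$-cylinder. The bridging you propose is circular. Both ``$\fl(L_{A_j})$ is an irreducible free locus'' (Proposition~\ref{p:components}) and the rigidity statement promoting an inclusion of irreducible free loci to an isomorphism of simple algebras (Theorem~\ref{t:unique}) are proved in the paper by appealing to Theorem~\ref{t:main} and its Corollary~\ref{c:equal}. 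Unless you supply independent proofs of those, you are assuming what you want to prove.

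What the paper actually does to close exactly this gap is a determinant argument. Write $p=\det\bigl(L_A(\gX)-f(A)\otimes\gY\bigr)$ in generic matrix entries. If $p$ genuinely depends on some entry $t$ of $\gY$, then since $\kk$ is infinite one can first specialize $\gX$ to a tuple $X$ with $\det L_B(X)\neq0$ (such $X$ are Zariski-dense) and specialize the remaining entries of $\gY$ so that $q(t)=p(X,\alpha,t)$ is a nonconstant one-variable polynomial; Remark~\ref{r:const} then produces a matrix $T$ (a companion matrix, since $\kk$ need not be algebraically closed) with $q(T)=0$, which after an ampliation $X\mapsto X\otimes I$ manufactures a point of $\fl\bigl(L_A-f(A)y\bigr)$ lying outside the $\fl(L_B)$-cylinder, a contradiction. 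This forces $y$-independence, and only then does Proposition~\ref{p:nil} give nilpotence of the ideal generated by $f(A)$. That contradiction step is the missing idea in your $(\Rightarrow)$ direction, and it cannot be replaced by citing Proposition~\ref{p:components} or Theorem~\ref{t:unique}.
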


\begin{proof}
($\Rightarrow$) It suffices to prove that for every $f\in\px_+$, $f(B)\in\rad\cB$ implies $f(A)\in\rad\cA$. If $f(B)$ generates a nilpotent ideal in $\cB$, then 
\begin{equation}\label{e:e1}
\fl(L_A-f(A)y)\subseteq \fl(L_B-f(B)y)=\fl(L_B-0\cdot y)
\end{equation}
by Corollary \ref{c:poly} and Proposition \ref{p:nil}.

For $n\in\N$ let $\gX_i,\gY$ be $n\times n$ generic matrices and let
$$p=\det\left(L_A(\gX) -f(A)\otimes \gY\right).$$
Suppose there exist $1\le \imath_0,\jmath_0\le n$ such that $\frac{\partial p}{\partial t}\neq0$, where $t=(\gY)_{\imath_0\jmath_0}$. Because $\kk$ is infinite, there exist $X_i\in M_n(\kk)$  and $\alpha_{\imath\jmath}\in\kk$ for all $\imath\neq\imath_0$ and $\jmath\neq\jmath_0$ such that 
$$\det(L_B(X))\neq0,\quad \frac{\partial p}{\partial t}(X,\alpha,t)\neq0.$$
Let $q=p(X,\alpha,t)\in\kk[t]$; since $q$ is non-constant polynomial of degree at most $nd$, there exists $T\in M_{nd}(\kk)$ such that $q(T)=0$ by Remark \ref{r:const}. Now let $Y'\in M_{n^2d}(\kk)$ be a block $n\times n$ matrix such that its $(\imath,\jmath)$-block equals $T$ if $\imath=\imath_0$ and $\jmath=\jmath_0$, and $\alpha_{\imath\jmath} I$ otherwise. Then
$$\det(L_A(X\otimes I) -f(A)\otimes Y')=0,$$
which contradicts \eqref{e:e1} since $\det(L_B(X\otimes I))\neq0$.

Hence the free locus of $I-\sum_iA_ix_i -f(A)y$ does not depend on $y$ and so $\fl(L_A-f(A)y)=\fl(L_A-0\cdot y)$. Therefore $f(A)$ generates a nilpotent ideal in $\cA$ by Proposition \ref{p:nil}.

($\Leftarrow$) Let $a_i$ and $b_i$ be equivalence classes of $A_i$ and $B_i$ in $\cA/\rad\cA$ and $\cB/\rad\cB$, respectively, and assume there is a homomorphism $\phi:\cB/\rad\cB\to\cA/\rad\cA$ satisfying $\phi(b_i)=a_i$. Suppose $\det(L_B(X))\neq0$ for $X\in M_n(\kk)^g$. Then there exists $p\in\kk[t]$, $p(0)=0$, such that $p\left(I\otimes I-\sum_iB_i\otimes X_i\right)=I\otimes I$ by the Cayley-Hamilton theorem. Let $q(t)=p(1-t)-p(1)$; then $q(0)=0$ and
$q\left(\sum_iB_i\otimes X_i\right)=(1+q(1))I\otimes I$. If $q(1)\neq-1$, then $I\otimes I\in M_n(\cB)$ and hence $I\in\cB$, so
$$q\left(\sum_ib_i\otimes_{\kk} X_i\right)=(1+q(1))\one_{\cB/\rad\cB}\otimes_{\kk} I\in (\cB/\rad\cB)\otimes_{\kk} M_n(\kk);$$
On the other hand, if $q(1)=-1$, then $q\left(\sum_iB_i\otimes X_i\right)=0$ and so $q\left(\sum_ib_i\otimes_{\kk} X_i\right)=0$.
Since $\phi(\one_{\cB})=\one_{\cA}$, both cases imply
$$q\left(\sum_ia_i\otimes_{\kk} X_i\right)=(1+q(1))\one_{\cA/\rad\cA}\otimes_{\kk} I\in (\cA/\rad\cA)\otimes_{\kk} M_n(\kk).$$
Consequently $q\left(\sum_iA_i\otimes X_i\right)=(1+q(1))I\otimes I+N$ for some $N\in M_n(\rad\cA)$ and therefore
$p\left(I\otimes I-\sum_iA_i\otimes X_i\right)=I\otimes I+N$, so $\det(L_A(X))\neq0$ since $N$ is nilpotent. Thus $\fl(L_A)\subseteq\fl(L_B)$.
\end{proof}

\begin{rem}\label{r:bounds}
	Let $L_1$ and $L_2$ be monic linear pencils of sizes $d_1$ and $d_2$, respectively. By Proposition \ref{p:nil} and proofs of Lemma \ref{l:poly} and Theorem \ref{t:main} one can derive deterministic bounds on size of matrices $X_1,\dots,X_g$ for checking $\fl(L_1)\subseteq \fl(L_2)$ that are exponential in $g$ and $\max\{d_1,d_2\}$.
\end{rem}

From here on we write $\cA$ (resp. $\cB$) for the (possibly non-unital) $\kk$-algebra generated by the coefficients $A_1,\dots,A_g$ (resp. $B_1,\dots,B_g$) of the pencil $L_A$ (resp. $L_B$).

\begin{cor}\label{c:equal}
	Let the notation be as in Theorem \ref{t:main}. Then $\fl(L_A)=\fl(L_B)$ if and only if there exists an isomorphism $\cA / \rad\cA \to \cB / \rad\cB$ induced by $A_i\mapsto B_i$.
\end{cor}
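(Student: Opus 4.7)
The plan is to deduce Corollary \ref{c:equal} directly from the Singularit\"atstellensatz (Theorem \ref{t:main}) applied in both directions, with a small bookkeeping argument to upgrade the two homomorphisms to mutually inverse isomorphisms.

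For the forward direction, assume $\fl(L_A)=\fl(L_B)$. Then both $\fl(L_A)\subseteq\fl(L_B)$ and $\fl(L_B)\subseteq\fl(L_A)$ hold, so Theorem \ref{t:main} supplies $\kk$-algebra homomorphisms
\[
\phi:\cB/\rad\cB\to \cA/\rad\cA,\quad b_i\mapsto a_i,\qquad
\psi:\cA/\rad\cA\to \cB/\rad\cB,\quad a_i\mapsto b_i,
\]
where $a_i,b_i$ denote the residue classes of $A_i,B_i$. The composition $\psi\circ\phi$ fixes the generators $b_i$ of $\cB/\rad\cB$, so it agrees with the identity on a generating set and is therefore the identity on all of $\cB/\rad\cB$; symmetrically $\phi\circ\psi=\id_{\cA/\rad\cA}$. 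Hence $\psi$ is the required isomorphism induced by $A_i\mapsto B_i$.

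For the converse, suppose there is an isomorphism $\psi:\cA/\rad\cA\to\cB/\rad\cB$ with $\psi(a_i)=b_i$. Then $\psi^{-1}$ is a homomorphism sending $b_i\mapsto a_i$, so Theorem \ref{t:main} gives $\fl(L_A)\subseteq\fl(L_B)$. Applying Theorem \ref{t:main} to $\psi$ itself yields $\fl(L_B)\subseteq\fl(L_A)$, and equality follows.

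There is no real obstacle here; the only subtlety worth flagging is that the homomorphisms produced by Theorem \ref{t:main} are \emph{a priori} only guaranteed to exist on the level of the semisimple quotients (and to send generators to generators). The generation-by-$\{a_i\}$ and $\{b_i\}$ is exactly what makes $\psi\circ\phi$ and $\phi\circ\psi$ forced to be identities, so the two one-sided homomorphisms automatically assemble into mutually inverse isomorphisms without any further work.
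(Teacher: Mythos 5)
Your proof is correct and is exactly the argument the paper has in mind (the paper leaves this corollary without an explicit proof). Applying Theorem \ref{t:main} in both directions and noting that the resulting homomorphisms fix generators, hence compose to the identity, is the right and essentially only route.
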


The validity of $\fl(L_A)\subseteq\fl(L_B)$ can now be effectively tested. Using probabilistic algorithms for finding the radical of a finite-dimensional algebra (see e.g. \cite{CIW}) we first reduce the problem to the case where $\cA$ and $\cB$ are semisimple. Then we find $\ell\le \lambda(\max\{d,e\})$ such that $\{w(A)\colon 1\le|w|\le \ell\}$ linearly spans $\cA$ and $\{w(B)\colon 1\le|w|\le \ell\}$ linearly spans $\cB$. Next, we determine the linear relations between the elements of $\{w(B)\colon 1\le|w|\le \ell+1\}$. Finally we check whether they are also satisfied by $\{w(A)\colon 1\le|w|\le \ell+1\}$.

\subsection{Irreducible free loci}\label{ss:irr}

In this subsection we discuss irreducible components of free loci and how they correspond to the Artin-Wedderburn decomposition of the semisimple algebra $\cA/\rad\cA$ assigned to a pencil $L_A$.

\begin{rem}\label{r:simple}
	Let $\cA$ be a finite-dimensional simple $\kk$-algebra. Then $\cA\cong M_m(\Delta)$ for some finite-dimensional division $\kk$-algebra $\Delta$. Up to isomorphism there is exactly one simple unital left $\cA$-module, namely $\Delta^m$, and every unital left $\cA$-module is isomorphic to a direct sum of copies of $\Delta^m$. Let $\delta=m\dim_{\kk}\Delta$; then there exists an irreducible representation $\rho:\cA\to M_{\delta}(\kk)$, which is unique up to conjugation by the Skolem-Noether theorem \cite[Theorem 4.48]{Bre}, and every representation of $\cA$ factors through it.
\end{rem}

We will also use the following refinement of the Skolem-Noether theorem.

\begin{lem}\label{l:SN}
	For $1\le j\le s$ let $\rho_j:\cA^{(j)}\to M_{d_j}(\kk)$ be an irreducible representation of a simple $\kk$-algebra $\cA^{(j)}$. If $\iota: \cA^{(1)}\times\cdots\times \cA^{(s)}\to M_d(\kk)$ is a unital embedding, then there exists $P\in\GL_d(\kk)$ such that
	$$P\iota(a) P^{-1} = (I\otimes\rho_1(a))\oplus\cdots\oplus (I\otimes\rho_s(a)) \in M_d(\kk)
	\qquad \forall a \in\cA^{(1)}\times\cdots\times \cA^{(s)}.$$
\end{lem}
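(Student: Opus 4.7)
The plan is to leverage the fact that a unital embedding of a semisimple algebra into $M_d(\kk)$ endows $\kk^d$ with the structure of a semisimple module, whose isotypic decomposition will provide the desired block form.

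First I would set $e_j=\iota(\one_{\cA^{(j)}})\in M_d(\kk)$ for each $j$. Because $\iota$ is a unital homomorphism and $\one_{\cA^{(1)}},\dots,\one_{\cA^{(s)}}$ are pairwise orthogonal central idempotents summing to the identity of $\cA^{(1)}\times\cdots\times\cA^{(s)}$, the $e_j$ are pairwise orthogonal idempotents of $M_d(\kk)$ with $\sum_j e_j=I$. Setting $V_j=e_j\kk^d$ therefore gives a vector space decomposition $\kk^d=V_1\oplus\cdots\oplus V_s$ that is preserved by every $\iota(a)$, so after choosing any basis adapted to this decomposition, $\iota$ becomes block-diagonal with blocks $\iota_j:\cA^{(j)}\to\End_{\kk}(V_j)$, where only the $j$-th component of $\cA^{(1)}\times\cdots\times\cA^{(s)}$ acts nontrivially on $V_j$ (the others are killed by the orthogonal idempotents).

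Next, each $V_j$ is a unital left $\cA^{(j)}$-module via $\iota_j$. Since $\cA^{(j)}$ is a finite-dimensional simple $\kk$-algebra, the discussion in Remark \ref{r:simple} tells me that $V_j$ is isomorphic, as a $\cA^{(j)}$-module, to $m_j$ copies of the unique simple $\cA^{(j)}$-module $S_j$ of $\kk$-dimension $d_j$ for some $m_j\in\N$ (and $\iota_j$ is injective only if $m_j\ge 1$, which is forced here by unitality). On $S_j$, the action of $\cA^{(j)}$ is an irreducible representation, so by the Skolem--Noether theorem applied to $\cA^{(j)}$ it is conjugate to $\rho_j$. Thus one can choose a basis of $V_j$ compatible with its decomposition into $m_j$ copies of $S_j$ in which $\iota_j(a^{(j)})$ acts as $I_{m_j}\otimes\rho_j(a^{(j)})$.

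Finally, I would assemble the change-of-basis matrices on each block into a single $P\in\GL_d(\kk)$: concatenating the chosen bases of the $V_j$ yields a basis of $\kk^d$, and conjugation by the corresponding $P$ transforms $\iota(a)$ into the claimed block-diagonal form $(I\otimes\rho_1(a))\oplus\cdots\oplus(I\otimes\rho_s(a))$. The only potentially subtle point is ensuring that the identifications on each block land in $\rho_j$ and not some $\kk$-conjugate of $\rho_j$; this is precisely what Skolem--Noether guarantees, and is the only place where simplicity (rather than merely semisimplicity) of the $\cA^{(j)}$ is used. The argument works uniformly over any characteristic-zero field since we never pass to the algebraic closure—the $\kk$-dimension $d_j$ of the simple module absorbs whatever division algebra $\Delta_j$ appears in $\cA^{(j)}\cong M_{m_j'}(\Delta_j)$.
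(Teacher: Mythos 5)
Your proposal is correct and follows essentially the same route as the paper: both use the images of the orthogonal central idempotents $\one_{\cA^{(j)}}$ to decompose $\kk^d$ into blocks on which only the $j$-th factor acts, and then invoke Skolem--Noether on each block to normalize $\iota|_{\cA^{(j)}}$ to the form $I\otimes\rho_j$, finally assembling the block conjugations into a single $P\in\GL_d(\kk)$. Your extra step of first decomposing each $V_j$ into copies of the simple module $S_j$ is exactly what justifies the paper's shorter appeal to Skolem--Noether (and is spelled out in its Remark~\ref{r:simple}), so there is no substantive difference.
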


\begin{proof}
Consider vector subspaces $U_j=\im \iota(\one_{\cA^{(j)}})$ for $1\le j\le s$; it is easy to check that $\kk^d=U_1\oplus\cdots\oplus U_s$, $\iota(\cA^{(j)}) U_j\subseteq U_j$ and $\iota(\cA^{(j)}) U_{j'}=0$ for $j'\neq j$. Hence we have a unital embedding $\cA^{(j)}\to \End_{\kk}(U_j)$. By the Skolem-Noether theorem there exists $P_j\in\End_{\kk}(U_j)$ such that
$$P_j\iota|_{\cA^{(j)}}(a_j)P_j^{-1}=I\otimes\rho_j(a_j)$$
for all $a_j\in\cA^{(j)}$. If $P_0\in\GL_d(\kk)$ is the transition matrix corresponding to the decomposition $\kk^d=U_1\oplus\cdots\oplus U_s$, then let $P=P_0(P_1\oplus\cdots\oplus P_s)$.
\end{proof}

A pencil $L$ is {\bf minimal} if it is of the smallest size among all pencils whose free loci are equal to $\fl(L)$. (Note: (i) a pencil of a minimal realization is not necessarily minimal; (ii) a realization with a minimal pencil is not necessarily minimal.) A minimal pencil $L_A$ is {\bf irreducible} if $\cA$ is simple.

\begin{thm}\label{t:unique}
	Let $L_A$ and $L_B$ be minimal pencils of size $d$ and assume that $\cA$ and $\cB$ are semisimple. Then $\fl(L_A)=\fl(L_B)$ if and only if there exists $P\in\GL_d(\kk)$ such that $B_i=PA_iP^{-1}$ for $i=1,\dots,g$.
\end{thm}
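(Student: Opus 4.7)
The reverse direction is immediate: if $B_i=PA_iP^{-1}$, then $L_B(X)=(P\otimes I)L_A(X)(P\otimes I)^{-1}$, so $\det L_B(X)=\det L_A(X)$ for every matrix tuple $X$, hence $\fl(L_A)=\fl(L_B)$. The real work is in the forward direction, and the plan is to combine Corollary \ref{c:equal} with Lemma \ref{l:SN} and the uniqueness part of Remark \ref{r:simple}, after using minimality to control the multiplicities of the irreducible components.

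Assume $\fl(L_A)=\fl(L_B)$. Since $\cA,\cB$ are semisimple, Corollary \ref{c:equal} gives an isomorphism $\phi:\cA\to\cB$ with $\phi(A_i)=B_i$. Write the Artin--Wedderburn decompositions $\cA=\cA^{(1)}\times\cdots\times\cA^{(s)}$ and $\cB=\cB^{(1)}\times\cdots\times\cB^{(s')}$; since $\phi$ is an isomorphism, it permutes the simple factors, giving $s=s'$ and a bijection $\sigma$ with $\phi(\cA^{(j)})=\cB^{(\sigma(j))}$. Apply Lemma \ref{l:SN} to the two inclusions $\cA\hookrightarrow M_d(\kk)$ and $\cB\hookrightarrow M_d(\kk)$: after conjugating (and absorbing the permutation $\sigma$ into a block-permutation matrix), one obtains
\[
Q A_i Q^{-1}=\bigoplus_j I_{m_j}\otimes\rho^A_j(A_i),\qquad Q' B_i {Q'}^{-1}=\bigoplus_j I_{m'_j}\otimes\rho^B_j(B_i),
\]
where $\rho^A_j,\rho^B_j$ are the (unique up to conjugation) irreducible representations of $\cA^{(j)}$ and $\cB^{(\sigma(j))}$.

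The decisive observation is that with these block forms the Kronecker identity
\[
I_{m_j\delta_j}\otimes I_n-\sum_i\bigl(I_{m_j}\otimes\rho^A_j(A_i)\bigr)\otimes X_i
=I_{m_j}\otimes L_{\rho^A_j(A)}(X)
\]
yields $\det L_A(X)=\prod_j\det L_{\rho^A_j(A)}(X)^{m_j}$, so $\fl(L_A)=\bigcup_j\fl(L_{\rho^A_j(A)})$; analogously for $L_B$. The main obstacle — and the place where minimality plays its essential role — is to rule out $m_j>1$ and also to rule out any redundant simple component (one whose free locus is absorbed by the union of the others): if either occurred, we could drop the corresponding blocks to produce a strictly smaller pencil with the same free locus, contradicting minimality of $L_A$ (and similarly for $L_B$). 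Hence all $m_j=m'_j=1$, and no $\fl(L_{\rho^A_j(A)})$ is contained in the union of the remaining ones. Comparing the two irredundant decompositions of the common set $\fl(L_A)=\fl(L_B)$ forces a matching of irreducible components, which by the Singularit\"atstellensatz applied blockwise is compatible with the bijection $\sigma$ already provided by $\phi$; this also gives $\delta^A_j=\delta^B_{\sigma(j)}$ and $d=\sum_j\delta_j$.

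Finally, for each $j$ the composition $\rho^B_{\sigma(j)}\circ\phi|_{\cA^{(j)}}$ is an irreducible representation of the simple algebra $\cA^{(j)}$, and so is $\rho^A_j$; by the Skolem--Noether uniqueness recalled in Remark \ref{r:simple}, there exists $P_j\in\GL_{\delta_j}(\kk)$ with
\[
P_j\,\rho^A_j(a)\,P_j^{-1}=\rho^B_{\sigma(j)}(\phi(a))\qquad\text{for all }a\in\cA^{(j)}.
\]
Specializing to $a=\pi_j(A_i)$ (whose image under $\phi$ is $\pi_{\sigma(j)}(B_i)$) and setting $P:={Q'}^{-1}\bigl(\bigoplus_j P_j\bigr)Q$, one obtains $PA_iP^{-1}=B_i$ for all $i$. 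The only genuine difficulty to verify carefully is the minimality-driven collapse of multiplicities and redundant blocks; everything else is a bookkeeping assembly of Lemma \ref{l:SN}, Remark \ref{r:simple}, and Corollary \ref{c:equal}.
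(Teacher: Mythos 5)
Your proof is correct and follows essentially the same route as the paper's: invoke Corollary~\ref{c:equal} to get an algebra isomorphism, use Lemma~\ref{l:SN} and minimality to put both sides into multiplicity-one block-diagonal normal form, and then match the blocks by Skolem--Noether. The one small bookkeeping point worth making explicit is that Lemma~\ref{l:SN} is stated for a \emph{unital} embedding, so before invoking it one should observe (as the paper does) that $\one_{\cA}=\one_{\cB}=I_d$ --- otherwise restricting to $\im\one_{\cA}$ would already yield a strictly smaller pencil with the same free locus, contradicting minimality; your ``drop redundant blocks'' argument covers this in spirit, but is logically placed \emph{after} the application of the lemma rather than before.
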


\begin{proof}
If $\fl(L_A)=\fl(L_B)$, then $d=e$ by minimality. As elements of $M_d(\kk)$, $\one_{\cA}$ and $\one_{\cB}$ are idempotents. If for example $\one_{\cA}$ were a nontrivial idempotent, then the restriction and projection of matrices $A_i$ to subspace $\im\one_{\cA}$ would yield a smaller pencil with the same free locus, which contradicts the minimality assumption. Hence $\one_{\cA}=\one_{\cB}=I$. By Corollary \ref{c:equal} and semisimplicity we have
$$\cA\xleftarrow{\phi_1} \cC^{(1)}\times\cdots\times \cC^{(s)}\xrightarrow{\phi_2}\cB$$
for some simple algebras $\cC^{(j)}$ and isomorphisms $\phi_1,\phi_2$ satisfying $\phi_2\phi_1^{-1}(A_i)=B_i$. Let $\rho_j:\cC^{(j)}\to M_{d_j}(\kk)$ be an irreducible representation of $\cC^{(j)}$. By Lemma \ref{l:SN} and minimality there exist $P_1,P_2\in\GL_d(\kk)$ such that
$$P_1\phi_1(c) P_1^{-1} = \rho_1(c)\oplus\cdots\oplus \rho_s(c) = P_2\phi_2(c)P_2^{-1}$$
for all $c\in \cC^{(1)}\times\cdots\times \cC^{(s)}$. Therefore $P=P_2^{-1}P_1$ satisfies $B_i=PA_iP^{-1}$.
\end{proof}

A free locus is {\bf irreducible} if it is nonempty and not a union of two smaller free loci. Note that $\fl(L_1\oplus L_2)=\fl(L_1)\cup\fl(L_2)$.

\begin{prop}\label{p:components}\hspace{0em}
\begin{enumerate}[label={\rm(\roman*)}]
	\item If $\cA/\rad \cA$ is isomorphic to the product of $s$ simple algebras, then $\fl(L_A)$ has exactly $s$ irreducible components.
	\item Every irreducible free locus equals $\fl(L)$ for some irreducible $L$.
\end{enumerate}
\end{prop}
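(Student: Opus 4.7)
The plan is to use the Artin-Wedderburn decomposition of $\cA/\rad\cA$ to decompose $\fl(L_A)$ into $s$ pieces, one per simple factor, and then to identify these pieces as the irreducible components.

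For (i), write $\cA/\rad\cA\cong\cC^{(1)}\times\cdots\times\cC^{(s)}$, fix an irreducible representation $\rho_j\colon\cC^{(j)}\to M_{d_j}(\kk)$ (unique up to conjugation by Remark~\ref{r:simple}), and let $L_j$ be the pencil of size $d_j$ whose $i$-th coefficient is $\rho_j(c_i^{(j)})$, where $c_i^{(j)}\in\cC^{(j)}$ denotes the image of $A_i$. Since the canonical projection $\cA/\rad\cA\to\cC^{(j)}$ is surjective, the elements $c_i^{(j)}$ generate $\cC^{(j)}$ and so $\cL_j\cong\cC^{(j)}$ is simple (hence semisimple). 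The same surjectivity reasoning applied to the projection onto the full product shows that the algebra generated by the block-diagonal coefficients of $\bigoplus_jL_j$ is isomorphic to $\prod_j\cC^{(j)}\cong\cA/\rad\cA$, with the isomorphism matching up generators. Corollary~\ref{c:equal} then gives $\fl(L_A)=\fl(\bigoplus_jL_j)=\bigcup_j\fl(L_j)$.

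The main technical step is showing each $\fl(L_j)$ is irreducible. Non-emptiness follows from the corollary after Proposition~\ref{p:nil}, because $\cL_j\cong\cC^{(j)}$ is simple and in particular not nilpotent. Suppose $\fl(L_j)=\fl(M_1)\cup\fl(M_2)=\fl(M_1\oplus M_2)$ with each $\fl(M_k)\subsetneq\fl(L_j)$. Corollary~\ref{c:equal} supplies an isomorphism $\cC^{(j)}\cong\widetilde{\cM}/\rad\widetilde{\cM}$ (induced by matching coefficients), where $\widetilde{\cM}$ is generated by the block-diagonal coefficients of $M_1\oplus M_2$. Composing with the surjection $\widetilde{\cM}/\rad\widetilde{\cM}\to\cM_k/\rad\cM_k$ yields a unital surjective map from the simple algebra $\cC^{(j)}$, whose kernel is therefore either $\{0\}$ or all of $\cC^{(j)}$. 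In the first case the composite is an isomorphism, so Corollary~\ref{c:equal} forces $\fl(L_j)=\fl(M_k)$, contradicting properness. In the second case $\cM_k/\rad\cM_k=0$, so $\cM_k$ is nilpotent and the corollary after Proposition~\ref{p:nil} gives $\fl(M_k)=\emptyset$, again contradicting properness. A parallel argument rules out $\fl(L_{j_0})\subseteq\bigcup_{j\neq j_0}\fl(L_j)$: otherwise $\fl(L_A)=\fl(\bigoplus_{j\neq j_0}L_j)$, and Corollary~\ref{c:equal} combined with the same surjectivity argument yields $\cA/\rad\cA\cong\prod_{j\neq j_0}\cC^{(j)}$, contradicting uniqueness of the Artin-Wedderburn decomposition. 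Hence the $\fl(L_j)$ are exactly the $s$ distinct irreducible components.

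For (ii), if $\fl(L)$ is irreducible then (i) forces $s=1$, so $\cL/\rad\cL$ is simple. Construct the pencil $L_1$ of size $d_1$ exactly as in the proof of (i); then $\cL_1\cong\cC^{(1)}$ is simple and $\fl(L_1)=\fl(L)$. To see $L_1$ is minimal, let $L^*$ be any pencil with $\fl(L^*)=\fl(L_1)$; Corollary~\ref{c:equal} gives $\cL^*/\rad\cL^*\cong\cC^{(1)}$, and by the Wedderburn-Malcev theorem $\cC^{(1)}$ sits as a subalgebra inside $\cL^*\subseteq M_{d^*}(\kk)$. Since the smallest $\kk$-algebra embedding of the simple algebra $\cC^{(1)}$ into matrices has dimension $d_1$ (the degree of its unique irreducible representation, by Remark~\ref{r:simple}), we conclude $d^*\ge d_1$, so $L_1$ is minimal and hence irreducible in the sense of Subsection~\ref{ss:irr}.

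The main obstacle I expect is the irreducibility step in (i), where one must carefully rule out both the isomorphism and the zero-target alternatives for a unital surjection from the simple algebra $\cC^{(j)}$; this relies on simultaneously invoking Corollary~\ref{c:equal} and the criterion for empty free loci supplied by Proposition~\ref{p:nil}.
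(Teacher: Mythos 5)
Your proof is correct and follows essentially the same route as the paper: decompose via Artin--Wedderburn, use Corollary~\ref{c:equal} to split the free locus into pieces $\fl(L_j)$ with $\cC^{(j)}$ simple, and exploit the injective-or-zero dichotomy for homomorphisms out of a simple algebra. The only notable (and harmless) differences are that you prove irredundancy of the union where the paper proves pairwise distinctness of the pieces, and you spell out the minimality of the pencil in part~(ii) via Wedderburn--Malcev, a step the paper leaves implicit.
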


\begin{proof}
(i) Let $\phi:\cA/\rad \cA\to\cA^{(1)}\times\cdots\times \cA^{(s)}$ be an isomorphism to a direct product of simple algebras $\cA^{(j)}$. Let $A_i^{(j)}$ be the image of $A_i$ under the homomorphism $\cA\to\cA^{(j)}\to M_{d_j}(\kk)$, where $\cA^{(j)}\to M_{d_j}(\kk)$ is an arbitrary faithful representation. Then Corollary \ref{c:equal} yields
$$\fl(L_A)=\fl(L_{A^{(1)}\oplus\cdots\oplus A^{(s)}})=\fl(L_{A^{(1)}})\cup\cdots\cup \fl(L_{A^{(s)}}).$$
Also, $j_1\neq j_2$ implies $\fl(L_{A^{(j_1)}})\neq \fl(L_{A^{(j_2)}})$. Otherwise there would exist an isomorphism $\psi:\cA^{(j_1)}\to \cA^{(j_2)}$ given by $A^{(j_1)}_i\mapsto A^{(j_2)}_i$. If $\phi_{j_1}=\pi_{j_i}\phi$ and $\phi_{j_1}=\pi_{j_i}\phi$, where $\pi_j:\cA^{(1)}\times\cdots\times \cA^{(s)}\to \cA^{(j)}$ is the natural projection, then $\phi_{j_2}=\psi\phi_{j_1}$ and so $\phi_{j_1}(f(A))=0$ if and only if $\phi_{j_2}(f(A))=0$ for every $f\in\px_+$, which contradicts the surjectivity of $\phi$. Hence it suffices to prove that $\fl(L_A)$ is irreducible if $\cA$ is simple.

Suppose $\fl(L_A)=\fl(L_{A'})\cup \fl(L_{A''})=\fl(L_{A'\oplus A''})$ and let $\cB$ be the algebra generated by matrices $A'_i\oplus A''_i$. Then $\cA\cong \cB/\rad\cB$ by Corollary \ref{c:equal}, hence there is an embedding
$$\cA\hookrightarrow (\cA'\times \cA'')/\rad (\cA'\times \cA'')=(\cA'/\rad \cA')\times (\cA''/\rad \cA'')$$
such that the induced homomorphisms $\cA\to \cA'/\rad \cA'$ and $\cA\to \cA''/\rad \cA''$ are surjective. Since $\cA$ is simple, the induced map $\cA\to \cA'/\rad \cA'$ is trivial or injective. In the latter case $\cA\cong\cA'/\rad \cA'$ via $A_i\mapsto A'_i$, so Theorem \ref{t:main} implies $\fl(L_A)=\fl(L_{A'})$. Since $\fl(L_A)\neq\emptyset$, Theorem \ref{t:reg} implies that $\cA'/\rad \cA'$ and $\cA''/\rad \cA''$ cannot be both trivial, so we conclude that $\fl(L_A)=\fl(L_{A'})$ or $\fl(L_A)=\fl(L_{A''})$. Therefore $\fl(L_A)$ is irreducible.

(ii) If $\fl(L_B)$ is irreducible, then $\cB/\rad \cB=\cA$ is a simple algebra by (i). By Remark \ref{r:simple} there exists an irreducible representation $\cA\to M_d(\kk)$. Set $A_i$ to be the image of $B_i$ under the homomorphism $\cB\to\cA\to M_d(\kk)$; then $L_A$ is the desired irreducible pencil.
\end{proof}

The radical of a finite-dimensional algebra and the Wedderburn decomposition of a semisimple algebra can be computed using probabilistic algorithms with polynomial complexity \cite{FR,Ebe}. By Proposition \ref{p:components} we can therefore efficiently determine irreducible components of a free locus. 
In a forthcoming paper it will be shown that if $\kk$ is algebraically closed and $\fl(L)$ is an irreducible free locus, then $\fl_n(L)$ is an irreducible algebraic set in $M_n(\kk)^g$ for sufficiently large $n\in\N$.


\section{Domains of noncommutative rational functions regular at the origin}\label{sec4}

In this section we shall explain how our results on free loci pertain to domains of nc rational functions. The main results are Corollary \ref{c:rat} and Theorem \ref{t:rat}. While Corollary \ref{c:rat} relates the inclusion of domains of nc rational functions to homomorphisms between the algebras associated to their minimal realizations, Theorem \ref{t:rat} analyzes the precise structure of nc rational functions with a given domain.

Recall that $\rx_0\subset \rx$ denotes the local subring of nc rational functions that are regular at the origin. As explained in Subsection \ref{ss:rat}, the domain of $\rr\in\rx_0$ is the complement of the free locus of a pencil corresponding to the minimal realization of $\rr$ by \cite[Theorem 3.1]{KVV1}. Hence Theorem \ref{t:main} yields the following result about comparable domains of elements in $\rx_0$.

\begin{cor}\label{c:rat}
	For $\rr,\rr'\in\rx_0$ let $(\cc,L_A,\bb)$ and $(\cc',L_{A'},\bb')$ be their minimal realizations. Then $\dom \rr\subseteq \dom \rr'$ if and only if there exists a homomorphism of $\kk$-algebras $\cA/\rad\cA \to \cA'/\rad\cA'$ induced by $A_i\mapsto A_i'$.
\end{cor}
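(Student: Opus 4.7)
The plan is to reduce Corollary \ref{c:rat} to a direct combination of two ingredients already in place: the characterization of the domain of an element of $\rx_0$ via its minimal realization, and the Singularit\"atstellensatz (Theorem \ref{t:main}). No new technical work should be needed.

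First I would invoke property (2) of minimal realizations recalled in Subsection \ref{ss:rat}, namely that if $(\cc,L_A,\bb)$ is a minimal realization of $\rr\in\rx_0$, then
\[
\dom\rr=\fl(L_A)^c,
\]
and similarly $\dom\rr'=\fl(L_{A'})^c$. Consequently, the inclusion of domains $\dom\rr\subseteq\dom\rr'$ is equivalent to the reverse inclusion of free loci $\fl(L_{A'})\subseteq\fl(L_A)$.

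Next I would apply Theorem \ref{t:main} to the pair of pencils $(L_{A'},L_A)$ (that is, playing the role of $(L_A,L_B)$ in the Singularit\"atstellensatz). This yields that $\fl(L_{A'})\subseteq\fl(L_A)$ holds if and only if there is a $\kk$-algebra homomorphism $\cA/\rad\cA\to\cA'/\rad\cA'$ induced by $A_i\mapsto A_i'$, which is precisely the claim. The only subtlety to watch is bookkeeping of the direction of the arrow: Theorem \ref{t:main} reverses inclusions, so chaining it with the complementation step above gives the homomorphism in the stated direction.

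There is essentially no obstacle, and I expect the entire proof to be two or three lines. The one thing worth double-checking is that Theorem \ref{t:main} applies unconditionally to the coefficient algebras of minimal realizations; but since the statement of Theorem \ref{t:main} imposes no hypotheses beyond $L_A, L_B$ being monic linear pencils, and minimal realizations by definition involve monic pencils, this is automatic.
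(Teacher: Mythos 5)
Your proposal is correct and follows exactly the route the paper takes: the corollary is stated directly after recalling that $\dom\rr=\fl(L_A)^c$ for a minimal realization and is explicitly introduced as an application of Theorem~\ref{t:main}, with no further argument. Your direction bookkeeping ($\dom\rr\subseteq\dom\rr'$ iff $\fl(L_{A'})\subseteq\fl(L_A)$, then applying the Singularit\"atstellensatz with $L_{A'}$ and $L_A$ in the roles of $L_A$ and $L_B$) is accurate and yields the homomorphism $\cA/\rad\cA\to\cA'/\rad\cA'$ as claimed.
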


\subsection{Regular nc rational functions}\label{ss:reg}

In this subsection we prove that every regular nc rational function, i.e., one that is defined at every matrix tuple, is in fact a polynomial. While this can be already deduced from Corollary \ref{c:rat}, we present a more precise proof which gives us explicit polynomial bounds for testing whether a nc rational function is a polynomial.

\begin{thm}\label{t:reg}
	Let $\rr$ be a nc rational function with minimal realization of size $d$ and let
	$$m=\left\{\begin{array}{ll}
	\lambda(d) & \kk \text{ is an algebraically closed field,}\\
	2\lambda(d) & \kk \text{ is a real closed field,}\\
	d\lambda(d)^2 & \text{otherwise.}
	\end{array}\right.$$
	If $\dom_m \rr=M_m(\kk)^g$, then $\rr$ is a nc polynomial of degree at most $d-1$.
\end{thm}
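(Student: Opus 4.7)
The plan is to combine the minimality of the given realization with Proposition \ref{p:nil}. By minimality we have $\dom_n\rr=\fl_n(L_A)^c$, so the hypothesis $\dom_m\rr=M_m(\kk)^g$ is equivalent to $\fl_m(L_A)=\emptyset$; equivalently, the commutative polynomial $p(X)=\det L_A(X)$ in the $gm^2$ entries of $X$ is nowhere zero on $M_m(\kk)^g$. The first goal is to promote this to the identity $p\equiv 1$. Once this is achieved, Proposition \ref{p:nil}, applied with no $x$-variables and with the $A_i$'s playing the role of the $N_j$'s, forces the algebra $\cA\subseteq M_d(\kk)$ generated by $A_1,\dots,A_g$ to be nilpotent. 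A standard descending-chain argument in $\kk^d$ then yields $\cA^d=0$, so that $w(A)=0$ whenever $|w|\ge d$; the Neumann expansion $L_A(X)^{-1}=\sum_w w(A)\otimes w(X)$ therefore reduces to a finite sum of degree $\le d-1$, and $\rr=\cc^tL_A^{-1}\bb$ is a nc polynomial of degree $\le d-1$.

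The heart of the argument is thus to deduce $p\equiv 1$ from $\fl_m(L_A)=\emptyset$ for the stated $m$, and I would argue by contradiction. Since $p(0)=\det I=1$, the condition $p\not\equiv 1$ means $p$ is non-constant; specializing all but one of its entries generically over the algebraic closure $\kkb$ reduces to the fundamental theorem of algebra and produces a zero $X^\ast\in\kkb^{g\lambda(d)^2}=M_{\lambda(d)}(\kkb)^g$. When $\kk=\kkb$ is algebraically closed, $X^\ast\in M_{\lambda(d)}(\kk)^g$ already contradicts $\fl_{\lambda(d)}(L_A)=\emptyset$. When $\kk$ is real closed, $\kkb=\kk[\sqrt{-1}]$ and the regular-representation embedding $M_{\lambda(d)}(\kkb)\hookrightarrow M_{2\lambda(d)}(\kk)$, sending $a+\sqrt{-1}\,b\mapsto\begin{pmatrix}a&-b\\b&a\end{pmatrix}$, is a $\kk$-algebra homomorphism and hence preserves invertibility; pushing $X^\ast$ through this embedding therefore yields a singular point of $L_A$ in $M_{2\lambda(d)}(\kk)^g$ and the required contradiction.

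For a general $\kk$ of characteristic zero the same strategy applies, but $\kkb/\kk$ may be infinite, so one must bound the degree of the subfield actually needed. Since $\deg p\le\lambda(d)\cdot d$, I would specialize all but one of the entries of $X$ to values in $\kk$ so that the resulting univariate polynomial is non-constant (possible as $\kk$ is infinite), and then pick a root $\alpha$ in an algebraic extension $\kk'/\kk$ of degree at most $\lambda(d)\cdot d$. The associated zero of $p$ then lies in $M_{\lambda(d)}(\kk')^g$, and the regular representation of $\kk'$ on itself yields a $\kk$-algebra embedding
\[
M_{\lambda(d)}(\kk')\hookrightarrow M_{\lambda(d)[\kk':\kk]}(\kk)\subseteq M_{d\lambda(d)^2}(\kk),
\]
transporting the singular point of $L_A$ to a zero in $M_{d\lambda(d)^2}(\kk)^g$ and contradicting $\fl_{d\lambda(d)^2}(L_A)=\emptyset$.

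The main obstacle is this last step: for an arbitrary base field one has to simultaneously control the degree of the extension generated by a zero of the multivariate polynomial $p$ and verify that the regular-representation embedding interacts correctly with the Kronecker product appearing in $L_A(X)$, so that singularity is genuinely preserved throughout the reduction. Once $p\equiv 1$ is in hand, the remaining ingredients -- Proposition \ref{p:nil} and the nilpotency-index bound for nilpotent subalgebras of $M_d(\kk)$ -- are already in place and yield the polynomial structure of $\rr$ cleanly.
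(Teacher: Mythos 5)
Your proof is correct. It shares the endgame with the paper: once $\det L_A\equiv 1$ on $M_{\lambda(d)}(\kk)^g$ is established, Proposition~\ref{p:nil} with the $A_i$'s as the nilpotent generators gives that $\cA$ is nilpotent, the flag $\kk^d\supsetneq\cA\kk^d\supsetneq\cdots$ forces $\cA^d=0$, and the Neumann series truncates to degree at most $d-1$. Where you diverge is in how the hypothesis is upgraded to $\det L_A\equiv 1$ at level $\lambda(d)$. The paper fixes $Y\in M_{\lambda(d)}(\kk)^g$ and applies Remark~\ref{r:const} to $N=\sum_i A_i\otimes Y_i\in M_{d\lambda(d)}(\kk)$: the companion matrix of $\det(I-tN)$, of size $1$, $2$, or $d\lambda(d)$ according to whether $\kk$ is algebraically closed, real closed, or arbitrary, serves as a test point $T$ with $\det(I\otimes I-T\otimes N)=0$ unless $N$ is nilpotent, so the hypothesis forces nilpotency and $\det(I-N)=1$. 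You instead argue by contradiction directly on the multivariate polynomial $\det L_A$ restricted to $M_{\lambda(d)}(\kk)^g$: a nonconstant polynomial of degree at most $d\lambda(d)$ with constant term $1$ has a zero over an extension $\kk'/\kk$ of degree $1$, $2$, or at most $d\lambda(d)$ in the three respective cases, and the regular representation $\kk'\hookrightarrow M_{[\kk':\kk]}(\kk)$ transports it to a singular point of $L_A$ at matrix level $\lambda(d)[\kk':\kk]\le m$, contradicting $\fl_m(L_A)=\emptyset$ after padding with zero blocks. The two devices are close cousins, since the companion matrix of the minimal polynomial of a root $\alpha$ is precisely the regular representation of $\alpha$, but yours works with the multivariate locus polynomial rather than the univariate $\det(I-tN)$ for a fixed $Y$, and it bypasses the explicit nilpotency statement of Remark~\ref{r:const}. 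Both routes yield the stated bounds $\lambda(d)$, $2\lambda(d)$, and $d\lambda(d)^2$.
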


\begin{proof}
	Let $(\cc,L_A,\bb)$ be the minimal realization of $\rr$ about 0, i.e. $\rr=\cc L_A^{-1}\bb$. By \cite[Theorem 3.1]{KVV1}, $\det(L_A(X))\neq 0$ for every $X_i\in M_m(\kk)$. In particular,
	$$\det(I\otimes I-T\otimes \sum_iA_i\otimes Y_i)\neq 0$$
	for all $Y_i\in M_{\lambda(d)}(\kk)$ and $T\in M_k(\kk)$ with $k\le \frac{m}{\lambda(d)}$. Hence $\sum_iA_i\otimes Y_i$ is a nilpotent matrix by Remark \ref{r:const} and thus
	$$\det(I\otimes I-\sum_iA_i\otimes Y_i)=1$$
	for all $Y_i\in M_{\lambda(d)}(\kk)$. By Proposition \ref{p:nil}, the algebra generated by $A_1,\dots,A_g$ is nilpotent, so
	$$r=\cc\left(I-\sum_iA_ix_i\right)^{-1}\bb=\sum_{j=0}^{d-1}\cc\left(\sum_iA_ix_i\right)^j\bb$$
	is a polynomial.
\end{proof}

\subsubsection{Douglas' lemma for nc rational functions}

Douglas' lemma \cite[Theorem 1]{Dou} is a classical results in operator theory. Its finite-dimensional version states that for $A,B\in M_n(\C)$ we have $AA^*\le BB^*$ if and only if there exists $C\in M_n(\C)$ with $\|C\|\le1$, such that $A=BC$. As an application of the characterization of regular nc rational functions we give a version of Douglas' lemma for nc rational functions.

\begin{cor}\label{c:douglas}
	Let $\rr,\rs \in \rxc$. Then
	\begin{equation}\label{e:douglas}
	\rr(X)^*\rr(X)\le \rs(X)^*\rs(X)\qquad \text{for all}\ \ X\in \dom\rr\cap\dom\rs
	\end{equation}
	if and only if there exists $\lambda\in \C$, $|\lambda|\le 1$, such that $\rr=\lambda \rs$.
\end{cor}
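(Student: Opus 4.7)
The plan is as follows. The $(\Leftarrow)$ direction is immediate, since $\rr=\lambda\rs$ with $|\lambda|\le 1$ gives $\rr(X)^*\rr(X)=|\lambda|^2\rs(X)^*\rs(X)\le\rs(X)^*\rs(X)$. For the nontrivial direction, I would first dispose of the case $\rs=0$ in $\rxc$ (the hypothesis forces $\rr\equiv 0$, and $\lambda=0$ works), and henceforth assume $\rs\neq 0$. Introduce $\rf:=\rr\rs^{-1}\in\rxc$, and let $\Omega\subseteq\bigcup_n M_n(\C)^g$ denote the (nonempty, level-wise Zariski-open) set of matrix tuples $X$ at which $\rr$ and $\rs$ are defined and $\rs(X)$ is invertible. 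The identity $\rr=\rf\rs$ combined with the hypothesis yields $\rs(X)^*(I-\rf(X)^*\rf(X))\rs(X)\ge 0$ on $\Omega$, hence $\|\rf(X)\|\le 1$ for every $X\in\Omega$.

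The crux is a one-variable Liouville argument along complex lines. Fix $A\in\Omega\cap M_m(\C)^g$ for some $m\in\N$. For any $Y\in M_m(\C)^g$, the map
\[
t\ \longmapsto\ \rf(A+tY)\ \in\ M_m(\C)
\]
is a matrix-valued rational function of $t$, with scalar rational entries. Since $A\in\Omega$ and $\Omega\cap M_m(\C)^g$ is Zariski-open, its complement cuts the line $\{A+tY:t\in\C\}$ in a finite set, so each entry of $\rf(A+tY)$ is a scalar rational function of $t$ bounded by $1$ in modulus on a cofinite subset of $\C$. Such a function has no poles (any pole would violate the bound on a punctured neighborhood) and is a bounded polynomial, hence constant by Liouville's theorem. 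Thus $\rf(A+tY)\equiv\rf(A)$, and specializing $t=1$ with varying $Y$ gives $\rf\equiv\rf(A)$ on all of $M_m(\C)^g$. Since $\rf$ is now defined at every $\alpha I_m$ ($\alpha\in\C^g$) and $\dom\rf$ is closed under direct summands, $\rf$ is also defined at every scalar $\alpha\in\C^g$ with $\rf(\alpha I_m)=\rf(\alpha)I_m$; comparing with $\rf(\alpha I_m)=\rf(A)$ forces $\rf(\alpha)\equiv\lambda$ for a constant $\lambda\in\C$, $\rf(A)=\lambda I_m$, and $|\lambda|\le 1$.

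To conclude $\rf=\lambda$ in $\rxc$, I would iterate the Liouville step at level $km$ (using $A^{\oplus k}\in\Omega\cap M_{km}(\C)^g$) for every $k\ge 1$, obtaining $\rf\equiv\lambda I_{km}$ on all of $M_{km}(\C)^g$. Since a nonzero element of $\rxc$ cannot vanish identically on arbitrarily large $M_n(\C)^g$---direct sums propagate nonvanishing---this forces $\rf-\lambda=0$ in $\rxc$, whence $\rr=\lambda\rs$. The hardest part I anticipate is precisely this final bridge from pointwise constancy at prescribed levels to equality in the skew field; the remaining ingredients (rationality of $t\mapsto\rf(A+tY)$, Zariski-openness of $\Omega$, direct-sum closure of $\dom\rf$) are routine, and notably the heavier machinery of Section \ref{sec3} (Theorem \ref{t:reg}, Corollary \ref{c:rat}) is not needed for this corollary.
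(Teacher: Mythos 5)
Your proof is correct in strategy, and it takes a genuinely different route from the paper's. The paper sets $\rf=\rr\rs^{-1}$ as you do, observes $\|\rf(X)\|\le1$ on the Zariski-open, $\GL_n$-invariant set $\cD_n:=\dom_n\rr\cap\dom_n\rs\cap\dom_n\rs^{-1}$, and then argues that boundedness on this dense open set forces $\dom_n\rf=M_n(\C)^g$ for infinitely many $n$; from there it invokes Theorem~\ref{t:reg} (regular nc rational functions are nc polynomials, which in turn rests on Proposition~\ref{p:nil} and the algebraization machinery of Section~\ref{sec3}) and finishes by multivariable Liouville on a bounded polynomial. You instead run a \emph{one-variable} Liouville argument along complex lines $t\mapsto\rf(A+tY)$ at a fixed level $m$, and then pass from constancy-at-levels-$km$ to the identity $\rf=\lambda$ in $\rxc$ by a direct-sum/contradiction argument. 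This bypasses Theorem~\ref{t:reg} entirely, which is a real economy: your proof is essentially self-contained modulo the fact from \cite{KVV1} that $\dom$ is Zariski open and closed under direct sums.

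Two small wrinkles worth tightening. First, the intermediate claim ``$\rf\equiv\rf(A)$ on all of $M_m(\C)^g$'' (and the ensuing ``$\rf$ is now defined at every $\alpha I_m$'') overstates what the line argument gives you at that point; constancy on a cofinite subset of each line shows $\rf(B)=\rf(A)$ for every $B\in\dom_m\rf$, but it does not by itself enlarge $\dom_m\rf$, and in general a nc rational function constant on its domain at a single level $m$ need not be a constant in $\rxc$ nor have full domain at level $m$. A cleaner way to get $\rf(A)=\lambda I_m$ is to use $\GL_m$-invariance of $\Omega_m$ and the intertwining $\rf(PAP^{-1})=P\rf(A)P^{-1}$ together with constancy: $\rf(A)$ commutes with all of $\GL_m(\C)$, hence is scalar. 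Second, your final bridge is the right one but should be stated as: if $\rf-\lambda\neq0$ then there is $X_0\in\dom_{n_0}\rf$ with $\rf(X_0)\neq\lambda I$, and then $X_0^{\oplus m}\in\dom_{mn_0}\rf$ has $\rf(X_0^{\oplus m})=\rf(X_0)^{\oplus m}\neq\lambda I$, contradicting constancy at level $mn_0=n_0\cdot m$. With these adjustments the argument is complete and, as you note, appreciably lighter than the route via Theorem~\ref{t:reg}.
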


\begin{proof}
	If $\rs=0$, then $\rr=0$, so we can assume that $\rs\neq0$. Denote
	$$\cD=\dom \rr\cap\dom\rs\cap\dom \rs^{-1}.$$
	By \eqref{e:douglas},
	$$(\rr(X)\rs^{-1}(X))^*(\rr(X)\rs^{-1}(X)) \le I\qquad \forall X\in\cD.$$
	Let $\rf=\rr\rs^{-1}$; then $\dom\rf\supseteq\cD$ and $\|\rf(X)\|\le 1$ for all $X\in\cD$. By definition, $\cD\cap M_n(\C)^g$ is Zariski open in $M_n(\C)^g$ and nonempty for infinitely many $n\in\N$, so boundedness implies $\dom_n\rf=M_n(\C)^g$ for infinitely many $n\in\N$. Consequently $\rf$ is regular everywhere, so it is a polynomial by Theorem \ref{t:reg}. Since it is bounded in norm by 1, it is constant by Liouville's theorem, so $\rr\rs^{-1}=\lambda\in\C$ and $|\lambda|\le 1$.
\end{proof}

\subsection{Characterization of nc rational functions with a given domain}\label{ss:char}

Let $\Dom=\{\dom\rr\colon \rr\in\rx_0\}$. A set in $\Dom$ is {\bf co-irreducible} if it is not an intersection of two larger sets in $\Dom$. Thus a domain is co-irreducible if and only if it is the complement of an irreducible free locus. A nc rational function $\rr\in\rx_0$ is {\bf irreducible} if it admits a realization $(\cc,L,\bb)$ with $L$ irreducible. Note that such a realization is automatically minimal by Remark \ref{r:simple}.

\begin{prop}\label{p:irr}
	If $\rr$ is irreducible, then $\dom\rr$ is co-irreducible. Conversely, for every co-irreducible set $D\in\Dom$ there exists a unique $d\in\N$ and a pencil $L$ of size $d$ such that irreducible rational functions whose domains are $D$ are exactly of the form
	$$\cc^t L^{-1}\bb,\qquad \bb,\cc\in\kk^d\setminus\{0\}.$$
\end{prop}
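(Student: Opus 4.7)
The plan is to argue the two directions separately, invoking Proposition \ref{p:components} for the structural side and Theorem \ref{t:unique} for uniqueness, with the main work occurring in the reverse implication.

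For the forward direction, take any realization $(\cc,L_A,\bb)$ of $\rr$ with $L_A$ irreducible, so that $\cA$ is simple and $L_A$ is minimal. Proposition \ref{p:components}(i) then gives that $\fl(L_A)$ has exactly one irreducible component and so is itself an irreducible free locus. A decomposition $\dom\rr=D_1\cap D_2$ in $\Dom$ with both $D_j$ strictly larger would translate into $\fl(L_A)=\fl_1\cup\fl_2$ with both $\fl_j$ strictly smaller, contradicting irreducibility; hence $\dom\rr$ is co-irreducible.

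For the converse, given a co-irreducible $D$, its complement $D^c$ is an irreducible free locus. Proposition \ref{p:components}(ii) supplies an irreducible pencil $L$ with $\fl(L)=D^c$, and Theorem \ref{t:unique} determines both its size $d$ and the pencil itself up to conjugation. Any irreducible $\rr$ with $\dom\rr=D$ admits a realization $(\cc',L_{A'},\bb')$ with $L_{A'}$ irreducible (hence automatically minimal), satisfying $\fl(L_{A'})=D^c=\fl(L)$; Theorem \ref{t:unique} then yields $P\in\GL_d(\kk)$ with $A_i'=PA_iP^{-1}$, and the substitution $\cc=P^t\cc'$, $\bb=P^{-1}\bb'$ rewrites $\rr=\cc^tL^{-1}\bb$ with nonzero $\bb,\cc$ (nonvanishing because $\rr\neq0$).

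The remaining step is showing that every choice of $\bb,\cc\in\kk^d\setminus\{0\}$ produces an irreducible function with domain $D$, and this is the point I expect to be the main obstacle. The key observation is that minimality of $L$ combined with simplicity of $\cA$ makes $\kk^d$ a simple $\cA$-module. Indeed, by Lemma \ref{l:SN} the unital embedding $\cA\hookrightarrow M_d(\kk)$ is conjugate to $k$ copies of the unique irreducible representation $\rho$; if $k\geq 2$ the smaller pencil $I-\sum_i\rho(A_i)x_i$ carries the same algebra up to isomorphism (via $A_i\mapsto\rho(A_i)$), hence the same free locus by Corollary \ref{c:equal}, contradicting minimality. (The same minimality argument, as in the proof of Theorem \ref{t:unique}, ensures $I\in\cA$.) With $\kk^d$ a simple $\cA$-module, every nonzero $\bb$ satisfies $\cA\bb=\kk^d$, and dually $\cA^t\cc=\kk^d$; these are the standard controllability and observability conditions making $(\cc,L,\bb)$ a minimal realization of $\rr:=\cc^tL^{-1}\bb$. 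Consequently $\dom\rr=\fl(L)^c=D$ and $\rr$ is irreducible, while nonvanishing of $\rr$ follows from $\cA\bb=\kk^d$, since $\cc^ta\bb=0$ for all $a\in\cA$ would then force $\cc=0$.
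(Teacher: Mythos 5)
Your argument is correct and follows essentially the same route as the paper's: both directions reduce to Proposition~\ref{p:components}, with Theorem~\ref{t:unique} supplying uniqueness of $L$ up to conjugation. The one step you elaborate on --- showing that every nonzero $\bb,\cc$ yields a minimal realization, by using Lemma~\ref{l:SN} and pencil-minimality to conclude that $\kk^d$ is a simple $\cA$-module (and hence a simple $\cA^t$-module), so that controllability $\cA\bb=\kk^d$ and observability $\cA^t\cc=\kk^d$ hold automatically --- is exactly the claim the paper disposes of in the sentence preceding the proposition (``such a realization is automatically minimal by Remark~\ref{r:simple}''), so you have filled in a point the paper leaves terse rather than taken a different approach.
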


\begin{proof}
The first part follows from Proposition \ref{p:components}. Now let $\rr\in\rx_0$ and suppose that $D=\dom \rr'$ is co-irreducible. If $(\cc',L_{A'},\bb')$ is a minimal realization of $\rr'$, then $\cA'/\rad\cA'$ is simple by Proposition \ref{p:components}. Fix some irreducible representation $\rho:\cA'/\rad\cA'\to M_d(\kk)$. Let $A_i$ be the image of $A'_i$ under the homomorphism $\cA'\to M_d(\kk)$ and set $L=L_A$. Then $D$ is the complement of $\fl(L)$ by Corollary \ref{c:equal} and $D=\dom(\cc^t L^{-1}\bb)$ for every $\bb,\cc\neq0$. On the other hand, if $\rr''$ is an irreducible function with $\dom\rr=D$ and $(\cc'',L_{A''},\bb'')$ is its minimal realization, then $\fl(L)=\fl(L_{A''})$ and so $A''_i=PA_iP^{-1}$ for some $P\in\GL_d(\kk)$ by Theorem \ref{t:unique}. Hence $\rr=(P^t\cc'')L^{-1}(P^{-1}\bb'')$.
\end{proof}

Let $\cR(D)$ be the set of irreducible functions whose domains equal $D$. If we adopt the notation of Proposition \ref{p:irr}, then the elements of $\cR(D)$ are exactly nonzero linear combinations of $d^2$ linearly independent irreducible functions $\ee_\imath^t L^{-1}\ee_\jmath$ for $1\le\imath,\jmath\le d$.

The next lemma is essentially a version of Wedderburn principal theorem \cite[Theorem 2.5.37]{Row} for (possibly non-unital) $\kk$-subalgebras in $M_d(\kk)$.

\begin{lem}\label{l:WPT}
	Let $\cA\subseteq M_d(\kk)$ be a $\kk$-algebra, $\cA/\rad\cA\cong \cA^{(1)}\times\cdots\times \cA^{(s)}$ with $\cA^{(j)}$ simple, and let $\rho_j:\cA^{(j)}\to M_{d_j}(\kk)$ be irreducible representations. Then there exist a subalgebra $\cS\subseteq \cA$ and $P\in\GL_d(\kk)$ such that $\cA=\cS\oplus \rad \cA$ (as vector spaces) and $P\cS P^{-1}$ is precisely the image of
	$$\cA^{(1)}\times\cdots\times \cA^{(s)} \xrightarrow{(I\otimes\rho_1)\times\cdots\times (I\otimes\rho_s)}
	0\oplus (I\otimes M_{d_1}(\kk))\oplus\cdots \oplus (I\otimes M_{d_s}(\kk))\subseteq M_d(\kk).$$
\end{lem}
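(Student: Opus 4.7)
The plan is to combine a non-unital Wedderburn--Malcev theorem, which lifts $\cA/\rad\cA$ to a semisimple subalgebra $\cS\subseteq\cA$, with the refined Skolem--Noether statement of Lemma~\ref{l:SN}, which simultaneously conjugates a unital embedding of a product of simple algebras into the prescribed block form.

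First I would anchor $\cS$ on an idempotent. Since $\rad\cA$ is a nilpotent ideal of the finite-dimensional algebra $\cA$, standard idempotent lifting produces $e\in\cA$ with $e^2=e$ whose image in $\cA/\rad\cA$ is the multiplicative identity $\one_{\cA/\rad\cA}$. The corner $e\cA e$ is then a \emph{unital} finite-dimensional subalgebra of $\cA$ with $\rad(e\cA e)=e\,\rad\cA\,e\subseteq\rad\cA$ and $e\cA e/\rad(e\cA e)\cong\cA/\rad\cA$. Applying the classical Wedderburn--Malcev theorem (cf.\ \cite[Theorem 2.5.37]{Row}) to $e\cA e$ yields a semisimple subalgebra $\cS\subseteq e\cA e$ with $e\cA e=\cS\oplus\rad(e\cA e)$. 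Using that every $a\in\cA$ satisfies $a-eae\in\rad\cA$, hence $\cA = e\cA e + \rad\cA$, together with $\cS\cap\rad\cA\subseteq\cS\cap\rad(e\cA e)=0$, one upgrades this to the asserted decomposition $\cA = \cS\oplus\rad\cA$.

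Next I would exploit that $e$ is simultaneously the multiplicative identity of $\cS$ and an idempotent of $M_d(\kk)$. Writing $\kk^d=\im(I-e)\oplus\im e=:W\oplus V$ and setting $d':=\dim V$, the relations $ea=ae=a$ for every $a\in\cS$ force $a$ to annihilate $W$ and to preserve $V$. Hence, after a preliminary change of basis $Q\in\GL_d(\kk)$ adapted to $W\oplus V$, every element of $Q\cS Q^{-1}$ is block-diagonal of the shape $0_{d-d'}\oplus\sigma(a)$, where $\sigma\colon\cS\hookrightarrow M_{d'}(\kk)$ is a \emph{unital} embedding. Applying Lemma~\ref{l:SN} to $\sigma$ precomposed with the given isomorphism $\cA^{(1)}\times\cdots\times\cA^{(s)}\cong\cS$ supplies $P'\in\GL_{d'}(\kk)$ satisfying $P'\sigma(a){P'}^{-1}=(I\otimes\rho_1(a_1))\oplus\cdots\oplus(I\otimes\rho_s(a_s))$ for every tuple $(a_1,\dots,a_s)\in\cA^{(1)}\times\cdots\times\cA^{(s)}$. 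Taking $P:=(I_{d-d'}\oplus P')\,Q$ then delivers $P\cS P^{-1}$ as precisely the image of the map described in the statement, with the leading $0$ block corresponding to $W$.

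The main obstacle I expect is the Wedderburn--Malcev step itself: the theorem is typically phrased for unital algebras, while $\cA$ here need not be unital. Passing through the corner $e\cA e$ and transporting the splitting back is the cleanest remedy, but it requires verifying the two containments $\rad(e\cA e)\subseteq\rad\cA$ and $\cA=\cS+\rad\cA$, both of which follow from the properties of the lifted idempotent. Once these are in place, the application of Lemma~\ref{l:SN} and the final assembly of $P$ are essentially bookkeeping.
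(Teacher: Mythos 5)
Your proof is correct and follows essentially the same route as the paper: obtain a Wedderburn--Malcev splitting $\cA=\cS\oplus\rad\cA$, use the identity $\one_{\cS}$ as an idempotent to split $\kk^d$ and reduce to a unital embedding $\cA^{(1)}\times\cdots\times\cA^{(s)}\cong\cS\hookrightarrow\End_{\kk}(V)$, then invoke Lemma~\ref{l:SN}. The only divergence is in the non-unital reduction for the splitting step: you lift an idempotent $e$ and transport the classical theorem through the corner $e\cA e$, whereas the paper passes to the unitization $\cA^\sharp$ and intersects the resulting semisimple complement $\cS'$ back with $\cA$; both reductions are standard and produce the same $\cS$.
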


\begin{proof}
	If $\cA$ is unital, then Wedderburn's principal theorem yields the decomposition $\cA=\cS\oplus \rad\cA$, where $\cS\subseteq\cA$ is a subalgebra. If $\cA$ is not unital, let $\cA^\sharp$ be the unitization of $\cA$ \cite[Section 2.3]{Bre}; i.e., $\cA^\sharp=\kk\oplus\cA$, $\cA$ is an ideal of $\cA^\sharp$ and $\rad\cA^\sharp=\rad\cA$. Hence $\cA^\sharp=\cS'\oplus\rad\cA^\sharp$ by Wedderburn's principal theorem and therefore $\cA=\cA\cap(\cS'\oplus\rad\cA)=(\cA\cap \cS')\oplus\rad\cA$, so $\cS=\cA\cap\cS'$ is the required subalgebra.
	
	Since $\cS$ is semisimple, it has the multiplicative identity $\one_{\cS}$. Let $U=\ker \one_{\cS}$ and $V=\im\one_{\cS}$. Then $\kk^d=U\oplus V$, $\cS U=0$ and $\cS V\subseteq V$. Therefore we have a unital embedding
	$$\cA^{(1)}\times\cdots\times \cA^{(s)}\cong \cS\subseteq \End_{\kk}(V),$$
	so Lemma \ref{l:SN} applies.
\end{proof}

\begin{thm}\label{t:rat}
	Let $\rr\in\rx_0$. Then $\dom \rr=D_1\cap \cdots\cap D_s$ for some co-irreducible $D_j\in\Dom$ and $\rr$ is a nc polynomial in $\ulx\cup \cR(D_1)\cup\cdots\cup \cR(D_s)$ of degree at most $d$, where $d$ is the size of the minimal realization of $\rr$.
\end{thm}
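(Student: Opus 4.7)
The plan is to extract structural information from the minimal realization $(\cc,L_A,\bb)$ of $\rr$ (with $L_A$ of size $d$), expand $L_A^{-1}$ as a truncated Neumann series around its semisimple part, and read off a polynomial expression from that expansion. First, by Proposition \ref{p:components}(i), $\fl(L_A)=\fl(L_{A^{(1)}})\cup\cdots\cup\fl(L_{A^{(s)}})$ according to the Artin--Wedderburn decomposition $\cA/\rad\cA\cong \cA^{(1)}\times\cdots\times\cA^{(s)}$, where each $L_{A^{(j)}}$ is built from a fixed faithful irreducible representation $\rho_j\colon\cA^{(j)}\to M_{d_j}(\kk)$. Setting $D_j:=\fl(L_{A^{(j)}})^c$ gives the decomposition $\dom\rr=D_1\cap\cdots\cap D_s$ into co-irreducible domains, so it remains to display $\rr$ as a polynomial in the prescribed alphabet.

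Second, by Lemma \ref{l:WPT}, after conjugating $L_A$ by a suitable $P\in\GL_d(\kk)$ (and adjusting $\cc,\bb$ accordingly so $\rr$ is unchanged), I may assume $A_i=S_i+N_i$ with $N_i\in\rad\cA$ and with $S_i$ block-diagonal of the form $0\oplus(I_{W_1}\otimes A_i^{(1)})\oplus\cdots\oplus(I_{W_s}\otimes A_i^{(s)})$ along a decomposition $\kk^d=U\oplus V_1\oplus\cdots\oplus V_s$; the corresponding semisimple subalgebra $\cS\subseteq\cA$ satisfies $\cA=\cS\oplus\rad\cA$ as vector spaces. Then $L_S:=I-\sum_i S_ix_i$ and $L_S^{-1}$ are block-diagonal with blocks $I$ on $U$ and $I_{W_j}\otimes L_{A^{(j)}}^{\pm 1}$ on $V_j$. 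Since $L_{A^{(j)}}$ is an irreducible minimal pencil for $D_j$, Proposition \ref{p:irr} implies that every nonzero entry $\ee_p^t L_{A^{(j)}}^{-1}\ee_q$ is an irreducible function in $\cR(D_j)$, and hence every entry of $L_S^{-1}$ lies in $\{0,1\}\cup\cR(D_1)\cup\cdots\cup\cR(D_s)$.

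The third and decisive step is to set $M:=\sum_i N_ix_i$, write $L_A=L_S-M$, and establish the finite expansion
\[
L_A^{-1}=\sum_{k=0}^{\ell-1}(L_S^{-1}M)^k L_S^{-1}\qquad\text{in }M_d(\rx),
\]
where $\ell\le d$ is the nilpotency index of $\rad\cA$. The crux is the truncation claim $(L_S^{-1}M)^\ell=0$, which I would verify by evaluating at an arbitrary point $X$ of the Zariski-dense set $\dom\rr$. By Cayley--Hamilton, $L_S^{-1}(X)$ is a $\kk$-polynomial in $\sum_i S_i\otimes X_i$; since $\cS$ is a subalgebra of $M_d(\kk)$, every positive power of $\sum_i S_i\otimes X_i$ lies in $\cS\otimes M_n(\kk)$, so $L_S^{-1}(X)\in(\kk I_d+\cS)\otimes M_n(\kk)$. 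The ideal property of $\rad\cA$ in $\cA$ then forces $L_S^{-1}(X)M(X)\in\rad\cA\otimes M_n(\kk)$, whose $\ell$-th power vanishes because $(\rad\cA)^\ell=0$; Zariski density of $\dom\rr$ promotes this pointwise vanishing to the claimed identity in $M_d(\rx)$.

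Finally, substituting the truncated expansion into $\rr=\cc^tL_A^{-1}\bb$ and expanding matrix products exhibits $\rr$ as a nc polynomial in $\ulx\cup\cR(D_1)\cup\cdots\cup\cR(D_s)$: entries of $L_S^{-1}$ lie in $\{0,1\}\cup\bigcup_j\cR(D_j)$ and entries of $M$ are $\kk$-linear in $\ulx$, so each summand is a monomial of bounded length in the given alphabet, from which the degree bound follows by counting alternating factors. The main obstacle is precisely the truncation statement, which is a formal identity in the noncommutative ring $M_d(\rx)$ and not directly amenable to naive manipulation; I would handle it by combining the Cayley--Hamilton trick, the ideal property of $\rad\cA$, and Zariski density of $\dom\rr$ to transport a pointwise calculation back to a formal one.
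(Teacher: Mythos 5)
Your argument reproduces the paper's skeleton — Proposition \ref{p:components} for the decomposition into co-irreducible domains, Lemma \ref{l:WPT} to split $A_i=S_i+N_i$ along $\cA=\cS\oplus\rad\cA$, a truncated geometric series for $L_A^{-1}$, and Proposition \ref{p:irr} to identify the entries of $L_S^{-1}$ with elements of $\cR(D_1)\cup\cdots\cup\cR(D_s)$ — but you prove the crucial truncation $(L_S^{-1}M)^\ell=0$ by a genuinely different route. The paper works in the ring of noncommutative formal power series $\kk\!\Langle\!\Langle X\Rangle\!\Rangle$: every coefficient of $N(I-S)^{-1}=\sum_{j\ge0}NS^j$ is a product $N_{i_0}S_{i_1}\cdots S_{i_j}$, which lies in $\rad\cA$ because $\rad\cA$ is a right ideal of $\cA$, so the $d$-th power of the series has all coefficients in $(\rad\cA)^d=0$ — one line, no evaluations. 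You instead verify the vanishing pointwise on $\dom\rr$ via Cayley--Hamilton (to put $L_S^{-1}(X)$ into $(\kk I_d+\cS)\otimes M_n(\kk)$), use the ideal property of $\rad\cA$ to land $L_S^{-1}(X)M(X)$ in $\rad\cA\otimes M_n(\kk)$, and invoke Zariski density of $\dom\rr$ to promote the pointwise identity to one in $M_d(\rx)$. This is sound, but costs an extra step you should not leave implicit: that an element of $\rx_0$ vanishing on a Zariski-dense subset of each $\dom_n$ is zero (which is the evaluation equivalence defining $\rx$, together with the fact that $\dom_n\rr$ is nonempty and open for every $n$). The power-series calculation is the shorter path and sidesteps this entirely. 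Two small remarks: the bound $\ell\le d$ on the nilpotency index should be justified (e.g.\ from the triangularizing flag used in the proof of Proposition \ref{p:nil}); and your expansion $\sum_{k<\ell}(L_S^{-1}M)^kL_S^{-1}$ agrees with the paper's $(I-S)^{-1}\sum_{j<d}(N(I-S)^{-1})^j$ after commuting $(I-S)^{-1}$ through, with the terms beyond $\ell$ vanishing.
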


\begin{proof}
Let $(\cc,L_A,\bb)$ be a minimal realization of $\rr$. Then $\dom \rr$ is a finite intersection of co-irreducible domains by Proposition \ref{p:components}. Let $\cA=\cS\oplus\rad\cA$ and $P\in\GL_d(\kk)$ be as in Lemma \ref{l:WPT}.  Write $A_i=S_i+N_i$ with respect to this decomposition and set $S=\sum_iS_ix_i$ and $N=\sum_iN_ix_i$. As a matrix over the ring of noncommutative formal power series $\kk\!\Langle\!\Langle X\Rangle\!\Rangle$, $L_A=I-S-N$ is invertible and
$$L_A^{-1}=(I-S)^{-1}\left(I-N(I-S)^{-1}\right)^{-1}=(I-S)^{-1}\sum_{j=0}^\infty \left(N(I-S)^{-1}\right)^j.$$
Since $(\rad \cA)^d=0$ and consequently
$$\left(N(I-S)^{-1}\right)^d=\left(\sum_{j=0}^\infty NS^j \right)^d=0,$$
we have
$$L_A^{-1}=(I-S)^{-1}\sum_{j=0}^{d-1} \left(N(I-S)^{-1}\right)^j.$$
Therefore $\rr$ is a polynomial of degree $d$ in $\ulx$ and the entries of $(I-S)^{-1}$. Let
$$PS_iP^{-1}=0\oplus(I\otimes A^{(1)}_i)\oplus\cdots\oplus (I\otimes A^{(s)}_i).$$
By the construction, $\cA^{(j)}$ is a simple algebra and $L_{A^{(j)}}$ is a simple pencil. Since
$$(I-S)^{-1}=P^{-1}\Big(
I\oplus (I\otimes L_{A^{(1)}}^{-1})\oplus\cdots \oplus (I\otimes L_{A^{(s)}}^{-1})
\Big)P,$$
the entries of $(1-S)^{-1}$ are polynomials of degree at most 1 in the elements of $\cR(D_1)\cup\cdots\cup \cR(D_s)$ by Proposition \ref{p:irr}.
\end{proof}

\begin{exa}
Let $\{x,y\}$ be our alphabet and consider rational functions
\begin{align*}
\rr_1&=(1-x-y(1-x)^{-1}y)^{-1}(1+x(1-x+y)^{-1})\\
&=\begin{pmatrix}0& 1& 0\end{pmatrix} \begin{pmatrix}1-x&-y& 0\\-y& 1-x& -x\\0& 0& 1-x+y\end{pmatrix}^{-1} \begin{pmatrix}0\\1\\1\end{pmatrix},\\
\rr_2&=(1-x-y)^{-1}(1-x)(1-x-y)^{-1}+(1-x-y)^{-1}x(1-x+y)^{-1}\\
&=\begin{pmatrix}1& 1& 0\end{pmatrix} \begin{pmatrix}1-x-y&-y& -x\\0& 1-x-y& 0\\0& 0& 1-x+y\end{pmatrix}^{-1} \begin{pmatrix}0\\1\\1\end{pmatrix}.
\end{align*}
It is easy to check that the given realizations are minimal, so
$$\dom \rr_1=\dom \rr_2=\dom \rs_1\cap \dom \rs_2,$$
where $\rs_1=(1-x-y)^{-1}$ and $\rs_2=(1-x+y)^{-1}$ are irreducible functions. It is evident that $\rr_2=\rs_1((1-x)\rs_1+x\rs_2)$ is a polynomial in $x,\rs_1,\rs_2$. On the other hand, it becomes clear that $\rr_1$ is a polynomial in $x,\rs_1,\rs_2$ only after we rewrite it as
$$\rr_1=\frac12((1-x-y)^{-1}+(1-x+y)^{-1})(1+x(1-x+y)^{-1})=\frac12(\rs_1+\rs_2)(1+x\rs_2).$$
\end{exa}


\def\flsn{\mathscr{Z}^{\operatorname{s}}_n}
\def\flhn{\mathscr{Z}^{\operatorname{h}}_n}

\section{Symmetric and hermitian pencils}\label{sec5}

In the final section we turn our attention to pencils with symmetric and hermitian matrix coefficients. Here the free loci are defined with tuples of symmetric and hermitian matrices, respectively. We call them free real loci. We investigate when two real loci are comparable; we show that this is equivalent to the existence of a $*$-homomorphism between $*$-algebras generated by the pencils (Theorem \ref{t:mainsh}). The main new ingredients needed to make this work are the theory of hyperbolic polynomials \cite{Gar,Ren} and the real Nullstellensatz from real algebraic geometry \cite{BCR}. Finally, in Subsection \ref{ss:kip} we formulate and prove a free (quantum) version of Kippenhahn's conjecture \cite{Kip} on simple eigenvalues of hermitian pencils.

Let $H_n(\C)\subseteq M_n(\C)$ and $S_n(\R)\subseteq M_n(\R)$ be the $\R$-spaces of hermitian and symmetric matrices, respectively. If the coefficients of $L$ are symmetric matrices, then $L$ is a {\bf symmetric} pencil and
$$\fls(L)=\bigcup_{n\in\N}\flsn(L),\qquad \flsn(L)=\fl_n(L)\cap S_n(\R)^g$$
is its {\bf free real locus}. Similarly, if the coefficients of $L$ are hermitian matrices, then $L$ is a {\bf hermitian} pencil with free real locus
$$\flh(L)=\bigcup_{n\in\N}\flhn(L),\qquad \flhn(L)=\fl_n(L)\cap H_n(\C)^g.$$

\subsection{Singularit\"atstellens\"atze for real loci}

In this subsection we prove the $*$-analog of Theorem \ref{t:main}.

\subsubsection{RZ polynomials}

Let $t$ and $\ulu=\{u_1,\dots,u_g\}$ be commutative indeterminates. Then $p\in\R[\ulu]$ is a {\bf real zero (RZ) polynomial} \cite{HV} if $p(0)\neq0$ and for every $\alpha\in\R^g$, $p(t\alpha)\in\R[t]$ has only real roots. This is essentially the dehomogenized version of hyperbolic polynomials that arise in convex optimization \cite{BGLS,Ren}, partial differential equations \cite{BGS} and real algebraic geometry \cite{Bra,NT,KPV}.

\begin{prop}\label{p:RZnull}
	Let $p\in\R[\ulu]$ be a RZ polynomial. If $q\in\C[\ulu]$ and $p(\alpha)=0$ implies $q(\alpha)=0$ for all $\alpha \in\R^g$, then $p(\alpha)=0$ implies $q(\alpha)=0$ for all $\alpha \in\C^g$.
\end{prop}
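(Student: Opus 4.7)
The plan is to show that for an RZ polynomial $p$, the real zero set $V_\R(p):=\{\alpha\in\R^g:p(\alpha)=0\}$ is Zariski dense in $V_\C(p):=\{\alpha\in\C^g:p(\alpha)=0\}$; any $q\in\C[\ulu]$ vanishing on $V_\R(p)$ then automatically vanishes on its Zariski closure $V_\C(p)$. First, writing $q=q_1+iq_2$ with $q_j\in\R[\ulu]$ reduces the problem to $q\in\R[\ulu]$, since for $\alpha\in\R^g$ we have $q(\alpha)=0$ iff $q_1(\alpha)=q_2(\alpha)=0$. Next, factor $p=\prod p_i^{n_i}$ into pairwise distinct $\R$-irreducibles: each $p_i(t\alpha)$ divides $p(t\alpha)\in\R[t]$, whose roots are all real, so each $p_i$ is itself RZ with $p_i(0)\neq0$. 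Since $V_\R(p)=\bigcup V_\R(p_i)$ and $V_\C(p)=\bigcup V_\C(p_i)$, it suffices to prove Zariski density for each irreducible RZ factor $p_i$.

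The crucial step is producing a smooth real point $a\in V_\R(p_i)$ at which $\nabla p_i(a)\neq 0$. I would examine the discriminant $D(\alpha):=\mathrm{disc}_t(p_i(t\alpha))\in\R[\alpha]$: if $D\not\equiv0$, then for generic $\alpha\in\R^g$ the univariate polynomial $p_i(t\alpha)$ has $\deg p_i$ distinct roots, all real by the RZ property, and for any such simple root $t_0$ the chain rule gives $\nabla p_i(t_0\alpha)\cdot\alpha\neq0$, so $a=t_0\alpha$ is the required smooth real point. The main obstacle is verifying $D\not\equiv0$, and this is precisely where $p_i(0)\neq0$ is essential. Assuming $D\equiv0$, every complex line $\C\alpha$ meets the subset $W:=\{u\in V_\C(p_i):u\cdot\nabla p_i(u)=0\}$, so the cone over $W$ fills $\C^g$; this forces $\dim W\geq g-1$, whence $W=V_\C(p_i)$ by irreducibility. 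The Nullstellensatz then yields $p_i\mid u\cdot\nabla p_i$ in $\C[\ulu]$ (using that $\R$-irreducibility of $p_i$ forces squarefreeness in $\C[\ulu]$, since either $p_i$ is $\C$-irreducible or $p_i=s\bar s$ with distinct $\C$-irreducibles). A degree count gives $u\cdot\nabla p_i=c\,p_i$ for some constant $c\in\C$; comparing constant terms with $p_i(0)\neq0$ forces $c=0$, and Euler's identity on the homogeneous components of $p_i$ then forces $p_i$ to be constant, a contradiction.

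With a smooth real point $a$ in hand, the holomorphic implicit function theorem expresses $V_\C(p_i)$ locally at $a$ as the graph of a holomorphic function $\phi:U\to\C$ on an open set $U\subseteq\C^{g-1}$; since $p_i$ has real coefficients, $\phi$ takes real values on $U\cap\R^{g-1}$. Hence $V_\R(p_i)$ locally contains the real-analytic submanifold $\{(u',\phi(u')):u'\in U\cap\R^{g-1}\}$ of real dimension $g-1$, sitting inside the $(g-1)$-dimensional complex manifold $V_\C(p_i)$. If $q\in\R[\ulu]$ vanishes on this submanifold, then the holomorphic function $u'\mapsto q(u',\phi(u'))$ on $U$ vanishes on $U\cap\R^{g-1}$, and hence vanishes identically on $U$ by the identity theorem. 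Thus $q$ vanishes on a classical open subset of $V_\C(p_i)$, and by irreducibility of $V_\C(p_i)$ we conclude $q\equiv0$ on $V_\C(p_i)$, as desired.
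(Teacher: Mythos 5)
Your proof is correct but takes a genuinely different route. Both arguments begin the same way (pass to real $q$, then to $\R$-irreducible factors $p_i$ of $p$, each still RZ with $p_i(0)\neq0$). The paper then cites [BCR, Theorem 4.5.1] --- an $\R$-irreducible polynomial that changes sign generates a real ideal --- and concludes from the Real Nullstellensatz that $p_i\mid q$ in $\R[\ulu]$, hence $p\mid q^s$. You instead prove Zariski density of the real zero set $V_\R(p_i)$ in $V_\C(p_i)$ directly: produce a nonsingular real point via a discriminant argument, then propagate $q=0$ from a real slice to a complex neighborhood via the holomorphic implicit function theorem and the identity theorem. The paper's route is shorter and yields a sharper divisibility statement; yours is more self-contained in that it avoids the theory of real ideals, at the price of the $D\not\equiv 0$ argument --- which is, in fact, carrying the same nontrivial load as the paper's ``obviously $p_j$ changes sign'': both amount to showing a generic real line meets $V(p_i)$ transversally at a real point.

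One small inaccuracy to fix: the step ``$\dim W\ge g-1$, whence $W=V_\C(p_i)$ by irreducibility'' only works as stated when $p_i$ is $\C$-irreducible. In the other case you allow for, $p_i=s\bar s$, the variety $V_\C(p_i)=V(s)\cup V(\bar s)$ is reducible and $\dim W\ge g-1$ a priori only gives $W\supseteq V(s)$ for one component. The conclusion is nonetheless true: $W$ is cut out by the \emph{real} polynomials $p_i$ and $u\cdot\nabla p_i$, so it is invariant under complex conjugation, and $W\supseteq V(s)$ forces $W\supseteq \overline{V(s)}=V(\bar s)$ as well. (Alternatively, since for real $\alpha$ the univariate polynomials $s(t\alpha)$ and $\bar s(t\alpha)$ have the same, all-real, roots, $D\equiv 0$ holds automatically when $p_i=s\bar s$; your contradiction then shows this case never occurs, i.e., an $\R$-irreducible RZ polynomial is already $\C$-irreducible.) With that justification corrected the proof is sound.
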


\begin{proof}
It clearly suffices to prove the statement for $q\in\R[\ulu]$. Let $p=p_1\cdots p_s$, where $p_j\in\R[\ulu]$ are irreducible. Fix $1\le j\le s$; then $p_j$ is a RZ polynomial. Since $p_j$ is also square-free, there obviously exist $\alpha,\beta\in\R^g$ such that $p_j(\alpha)p_j(\beta)<0$. By \cite[Theorem 4.5.1]{BCR}, the ideal in $\R[\ulu]$ generated by $p_j$ is real. Since $p_j(\alpha)=0$ implies $q(\alpha)=0$ for all $\alpha\in\R^g$, there exists $h_j\in \R[\ulu]$ such that $q=h_jp_j$ by the Real Nullstellensatz \cite[Theorem 4.1.4]{BCR}. Hence $q^s=(h_1\cdots h_s)p$, so $p(\alpha)=0$ implies $q(\alpha)=0$ for all $\alpha \in\C^g$.
\end{proof}

\subsubsection{Inclusion of free real loci}

Each symmetric or hermitian pencil $L$ gives rise to the RZ polynomial $\det L$. We now use the properties of RZ polynomials presented above to show that $\fls(L_1)\subseteq \fls(L_2)$ (or $\flh(L_1)\subseteq \flh(L_2)$) if and only if $\fl(L_1)\subseteq \fl(L_2)$.

\begin{prop}\label{p:dense}
	Let $L$ be a monic pencil.
	\begin{enumerate}[label={\rm(\roman*)}]
	\item If $L$ is hermitian, then $\flhn(L)$ is Zariski dense in $\fl_n(L)$ for every $n\in\N$.
	\item If $L_1$ and $L_2$ are symmetric, then
	$$\fls(L_1)\subseteq \fls(L_2)\quad \Rightarrow \quad\fl(L_1)\subseteq \fl(L_2).$$
	\end{enumerate}
\end{prop}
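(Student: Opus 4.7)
My plan is to prove part (i) first using Proposition \ref{p:RZnull} at the matricial level, and then deduce part (ii) from part (i) via realification. For part (i), I would fix $n$ and choose a real basis of $H_n(\C)$ (e.g.\ $\{E_{ii}\}\cup\{E_{ij}+E_{ji}\}\cup\{i(E_{ij}-E_{ji})\}$) whose $\C$-linear span is all of $M_n(\C)$. This yields real coordinates $v_1,\dots,v_N$ on $H_n(\C)^g$ with $N=gn^2$, together with a $\C$-linear identification $\C^N\cong M_n(\C)^g$ such that complexifying the coordinates extends the restriction of $\det L$ back to $\det L$ on $M_n(\C)^g$. The resulting real polynomial $p\in\R[v_1,\dots,v_N]$ (well-defined because $L(X)$ is hermitian for hermitian $X$) is a real zero polynomial: clearly $p(0)=1\ne 0$, and for each $Y\in H_n(\C)^g$ the roots of $p(tY)=\det(I-t\sum_jA_j\otimes Y_j)\in\R[t]$ are reciprocals of nonzero eigenvalues of the hermitian matrix $\sum_jA_j\otimes Y_j$, hence real. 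Proposition \ref{p:RZnull} then yields that every complex polynomial vanishing on the real zero set $\flhn(L)$ of $p$ also vanishes on its complex zero set $\fl_n(L)$, which is exactly Zariski density.

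For part (ii), the plan is to transfer symmetric information to hermitian information via realification and then invoke part (i). Let $\sigma:M_n(\C)\to M_{2n}(\R)$ be the $\R$-algebra embedding $Y+iZ\mapsto\left(\begin{smallmatrix}Y & -Z\\Z & Y\end{smallmatrix}\right)$, which restricts to $H_n(\C)\hookrightarrow S_{2n}(\R)$. Two ingredients are needed: (a) $\det\sigma(M)=|\det M|^2$ for every $M\in M_n(\C)$ (from the similarity of $\sigma(M)$ to $M\oplus\overline{M}$ via a fixed unitary), and (b) for real $A$ the matrices $A\otimes\sigma(M)$ and $\sigma(A\otimes M)$ are similar via a permutation depending only on the dimensions (both realize the same $\R$-linear operator on $\R^{2dn}$ obtained by realifying $A\otimes_\C M$, just in different real bases). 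Combining (a) and (b) componentwise gives $\det L(\sigma(X))=|\det L(X)|^2$ for any symmetric pencil $L$, so $X\in\flhn(L)$ if and only if $\sigma(X)\in\fls_{2n}(L)$. Under the hypothesis $\fls(L_1)\subseteq\fls(L_2)$ this equivalence promotes to $\flh(L_1)\subseteq\flh(L_2)$, and taking Zariski closures via part (i) delivers $\fl(L_1)\subseteq\fl(L_2)$.

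The main obstacle I anticipate is the bookkeeping around the real/complex identification in part (i): one must verify that the chosen real parametrisation of $H_n(\C)^g$ complexifies to a $\C$-linear bijection with $M_n(\C)^g$ under which the real restriction $p$ of $\det L$ extends back to the original polynomial, so that the real and complex zero loci of $p$ really match $\flhn(L)$ and $\fl_n(L)$ respectively. Once this is set up, the RZ property of $p$ is immediate from the spectral theorem for hermitian matrices, and Proposition \ref{p:RZnull} closes the argument. The realification identities in part (ii) are routine but require attention to tensor-product conventions; the nontrivial point is recognising that the two natural real structures on $\C^{dn}$ give two naturally isomorphic but distinct-looking realifications of $A\otimes_\C M$.
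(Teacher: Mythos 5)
Your argument is correct and follows the paper's proof in essence: part (i) reduces Zariski density to Proposition~\ref{p:RZnull} applied to a real zero polynomial obtained by restricting $\det L$ to hermitian tuples (you apply it once globally in $gn^2$ real coordinates, where the paper applies it slice-by-slice to the $2g$-variable polynomials $p_{X,Y}=\det L(\ulu X+\ulv Y)$ and then recovers an arbitrary $Z\in M_n(\C)^g$ as $\tfrac12 X+\tfrac{i}{2}Y$ with $X,Y$ hermitian), and part (ii) uses the same realification $*$-embedding $\iota_n:M_n(\C)\to M_{2n}(\R)$ to promote $\fls(L_1)\subseteq\fls(L_2)$ to $\flh(L_1)\subseteq\flh(L_2)$ and then invoke (i). The bookkeeping you flagged in (i) is harmless---the $\C$-linear extension of any $\R$-basis of $H_n(\C)^g$ is automatically a $\C$-linear isomorphism onto $M_n(\C)^g$, and the paper sidesteps it entirely by working with two-parameter slices.
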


\begin{rem}
Note that $\flhn(L)$ is not Zariski dense in $\fl_n(L)$ if $L$ is symmetric and $n\ge2$. For example, if $\gX=(\gx_{ij})_{i,j=1}^2$ is a $2\times 2$ generic matrix, then the polynomial $(1-\gx_{11})(1-\gx_{22})-\gx_{12}^2$ vanishes on $\fls_2(1-x)$ but not on $\fl_2(1-x)$.
\end{rem}

\begin{proof}[Proof of Proposition \ref{p:dense}]
(i) Fix $n\in\N$ and an element of the coordinate ring of $M_n(\C)^g$, i.e., a complex polynomial $q$ in $gn^2$ variables. Assume that $q=0$ on $\flhn(L)$. For every $X_i,Y_i\in H_n(\C)$ let
$$p_{X,Y}:= \det(L(\ulu X+\ulv Y))\in \R[\ulu,\ulv],\qquad q_{X,Y}:=q(\ulu X+\ulv Y)\in \C[\ulu,\ulv].$$
By assumption we have
$$p_{X,Y}(\alpha,\beta)=0\ \Rightarrow\ q_{X,Y}(\alpha,\beta)=0 \qquad \forall \alpha,\beta\in\R^g.$$
Since $p_{X,Y}$ is a RZ polynomial, Proposition \ref{p:RZnull} implies
$$p_{X,Y}(\alpha,\beta)=0\ \Rightarrow\ q_{X,Y}(\alpha,\beta)=0 \qquad \forall \alpha,\beta\in\C^g.$$
If $Z\in M_n(\C)^g$ is arbitrary, then $Z=\frac12(Z+Z^*)+\frac12i(iZ^*-iZ)$ and $Z+Z^*,iZ^*-iZ$ are tuples of hermitian matrices, so $q=0$ on $\fl_n(L)$.

(ii) Let $\iota:\C\to M_2(\R)$ be the standard $*$-embedding of $\R$-algebras. For every $n\in\N$, the ampliation map
$$\iota_n=\id_{M_n(\R)}\otimes_{\R}\iota:M_n(\C)=M_n(\R)\otimes_{\R}\C\to M_{2n}(\R)$$
is again a $*$-embedding. If $L_1$ is symmetric and $X\in H_n(\C)^g$, then $L_1(X)$ is invertible if and only if $\iota_{dn}\left(L_1(X)\right)=L_1(\iota_n(X))$ is invertible. Therefore $\fls(L_1)\subseteq \fls(L_2)$ implies $\flh(L_1)\subseteq \flh(L_2)$ and the conclusion follows from considering $L_1$ and $L_2$ as hermitian pencils and applying (i).
\end{proof}

Let $L_A$ be a symmetric (resp. hermitian) pencil. As before, let $\cA$ denote the real (resp. complex) algebra generated by $A_1,\dots,A_g$. We claim that $\cA$ is semisimple. Indeed, suppose that $f(A)\in\rad\cA$ for some $f\in\pxr$ (resp. $f\in \pxc$). Since $f(A)^*\in\cA$, we have $f(A)^*f(A)\in\rad\cA$. In particular, $f(A)^*f(A)$ is a positive semi-definite nilpotent matrix, so $f(A)^*f(A)=0$ and thus $f(A)=0$.

\begin{thm}\label{t:mainsh}\hspace{0em}
	\begin{enumerate}[label={\rm(\roman*)}]
	\item Let $L_A$ and $L_B$ be symmetric pencils. Then $\fls(L_A)\subseteq\fls(L_B)$ if and only if there exists a $*$-homomorphism of $\R$-algebras $\cB\to\cA$ induced by $B_i\to A_i$.
	\item Let $L_A$ and $L_B$ be hermitian pencils. Then $\flh(L_A)\subseteq\flh(L_B)$ if and only if there exists a $*$-homomorphism of $\C$-algebras $\cB\to\cA$ induced by $B_i\to A_i$.
	\end{enumerate}
\end{thm}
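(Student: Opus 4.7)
The plan is to reduce both parts of Theorem \ref{t:mainsh} to Theorem \ref{t:main} by using Proposition \ref{p:dense} to upgrade inclusions of free real loci into inclusions of full free loci. The key auxiliary fact, already recorded in the paragraph before the theorem statement, is that the $*$-algebras $\cA$ and $\cB$ are automatically semisimple, so $\rad\cA = 0 = \rad\cB$ and the Singularit\"atstellensatz will deliver an honest algebra map $\cB \to \cA$ rather than one between semisimple quotients. It then only remains to check that this map respects the involution, which will be automatic once we recall that the generators are self-adjoint.

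For the forward direction of (ii), I would fix $n \in \N$ and view $\det L_B(\gX) \in \C[\ulxx]$ as an element of the coordinate ring of $M_n(\C)^g$. The hypothesis $\flh(L_A) \subseteq \flh(L_B)$ forces $\det L_B$ to vanish on $\flhn(L_A)$; by Zariski density (Proposition \ref{p:dense}(i)) it therefore vanishes on all of $\fl_n(L_A)$. Varying $n$ yields $\fl(L_A) \subseteq \fl(L_B)$, and Theorem \ref{t:main} with $\kk = \C$, combined with $\rad\cA = \rad\cB = 0$, produces a $\C$-algebra homomorphism $\phi: \cB \to \cA$ with $\phi(B_i) = A_i$. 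For part (i) the strategy is identical: Proposition \ref{p:dense}(ii) gives $\fl(L_A) \subseteq \fl(L_B)$ directly, which, restricted to real tuples, feeds Theorem \ref{t:main} with $\kk = \R$ and produces the required $\R$-algebra homomorphism $\phi: \cB \to \cA$.

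The next step is to verify that $\phi$ is a $*$-homomorphism. Self-adjointness of the generators gives $w(B)^* = w^*(B)$ and $w(A)^* = w^*(A)$ for every word $w \in \mx$, where $w^*$ denotes the reversal of $w$. Extending by conjugate-linearity, for any polynomial $f = \sum_w c_w w$ (with real or complex coefficients) one obtains
$$\phi(f(B)^*) \;=\; \phi\Bigl(\sum_w \overline{c_w}\, w^*(B)\Bigr) \;=\; \sum_w \overline{c_w}\, w^*(A) \;=\; f(A)^* \;=\; \phi(f(B))^*,$$
so $\phi$ intertwines the two involutions. For the converse direction, any $*$-homomorphism $\cB \to \cA$ is in particular an algebra homomorphism, whence Theorem \ref{t:main} yields $\fl(L_A) \subseteq \fl(L_B)$, and restricting to symmetric (respectively hermitian) tuples recovers $\fls(L_A) \subseteq \fls(L_B)$ and $\flh(L_A) \subseteq \flh(L_B)$.

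The main subtlety I expect is the bookkeeping of scalar fields in part (i), where Proposition \ref{p:dense}(ii) passes through the complexification before we return to the real form; but this descent is automatic once we observe that $\phi$ sends the real generators $B_i$ to the real matrices $A_i$, so the remaining work is essentially record-keeping rather than additional theory.
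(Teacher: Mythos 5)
Your proposal is correct and follows essentially the same path as the paper: the paper's proof is the one-line ``Since $\cA$ and $\cB$ are semisimple, this assertion is a direct consequence of Proposition \ref{p:dense} and Theorem \ref{t:main},'' and your argument simply spells out what that sentence compresses — Zariski density (resp.\ complexification) upgrades the real locus inclusion to a full free locus inclusion, semisimplicity kills the radicals so Theorem \ref{t:main} produces an honest algebra map $\cB\to\cA$, and the reversal identity $w(B)^*=w^*(B)$ on the self-adjoint generators forces this map to be a $*$-homomorphism. You have correctly filled in the $*$-compatibility step that the paper leaves implicit, but the underlying strategy is identical.
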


\begin{proof}
Since $\cA$ and $\cB$ are semisimple, this assertion is a direct consequence of Proposition \ref{p:dense} and Theorem \ref{t:main}.
\end{proof}

Let $\Ortho_d\subset \GL_d(\R)$ and $\Unit_d\subset\GL_d(\C)$ be the orthogonal and the unitary group, respectively.

\begin{cor}\label{c:uniqueh}\hspace{0em}
	\begin{enumerate}[label={\rm(\roman*)}]
	\item Let $L_A$ and $L_B$ be symmetric minimal pencils of size $d$. Then $\fls(L_A)=\fls(L_B)$ if and only if there exists $Q\in\Ortho_d$ such that $B_i=QA_iQ^t$ for $i=1,\dots,g$.
	\item Let $L_A$ and $L_B$ be hermitian minimal pencils of size $d$. Then $\flh(L_A)=\flh(L_B)$ if and only if there exists $U\in\Unit_d$ such that $B_i=UA_iU^*$ for $i=1,\dots,g$.
	\end{enumerate}
\end{cor}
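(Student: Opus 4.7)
The plan is to combine Proposition~\ref{p:dense} with Theorem~\ref{t:unique}, then upgrade the resulting invertible conjugating matrix to a unitary (respectively orthogonal) one by means of a polar decomposition. The backward direction is immediate: conjugating the coefficients by $U\in\Unit_d$ (or $Q\in\Ortho_d$) conjugates the whole pencil by $U\otimes I$ (or $Q\otimes I$), so $L_B(X)=(U\otimes I)L_A(X)(U^*\otimes I)$, and since $U\otimes I$ is invertible while self-adjointness of $A_i$ is preserved under unitary conjugation, the hermitian (resp.\ symmetric) singularity locus is preserved. I focus on the forward direction of (ii); case (i) is strictly analogous with transpose in place of adjoint and $\R$ in place of $\C$.

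Assume $\flh(L_A)=\flh(L_B)$. By Proposition~\ref{p:dense}(i), $\flhn(L_A)$ is Zariski dense in $\fl_n(L_A)$, and since $\fl_n(L_B)$ is Zariski closed, the inclusion $\flhn(L_A)\subseteq\flhn(L_B)\subseteq\fl_n(L_B)$ forces $\fl_n(L_A)\subseteq\fl_n(L_B)$; reversing roles gives $\fl(L_A)=\fl(L_B)$. (In case (i) one invokes Proposition~\ref{p:dense}(ii) directly.) The algebras $\cA,\cB$ are semisimple, as observed just before Theorem~\ref{t:mainsh}, so Theorem~\ref{t:unique} supplies $P\in\GL_d(\C)$ with $B_i=PA_iP^{-1}$ for every $i$. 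Applying the adjoint to this relation and using $B_i=B_i^*$, $A_i=A_i^*$ gives $B_i=(P^*)^{-1}A_iP^*$, and comparing the two expressions for $B_i$ yields
\[
(P^*P)A_i = A_i(P^*P)\qquad\text{for all }i,
\]
so the positive definite matrix $P^*P$ commutes with every element of $\cA$.

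The polar decomposition trick now finishes the argument. Let $H=(P^*P)^{1/2}$ be the positive definite hermitian square root; as $H$ is a polynomial in $P^*P$, it also commutes with $\cA$. Set $U=PH^{-1}$; then $U^*U=H^{-1}(P^*P)H^{-1}=I$, so $U\in\Unit_d$, and using $HA_i=A_iH$ one computes
\[
UA_iU^* = UA_iU^{-1} = PH^{-1}A_iHP^{-1} = PA_iP^{-1} = B_i.
\]
For case (i), the analogous polar decomposition over $\R$, with $P^tP$ in place of $P^*P$, produces an orthogonal $Q\in\Ortho_d$ satisfying $B_i=QA_iQ^t$. The main obstacle is refining the invertible conjugation of Theorem~\ref{t:unique} to a unitary (orthogonal) one; the polar decomposition accomplishes this precisely because the self-adjointness of the generators forces the positive factor $P^*P$ (respectively $P^tP$) to centralize $\cA$, so that it can be absorbed without disturbing the conjugation relation.
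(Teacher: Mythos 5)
Your proof is correct, and it takes a genuinely different (and in some ways leaner) route than the paper. The paper's proof invokes Theorem~\ref{t:mainsh} to get a $*$-isomorphism $\cA\to\cB$ directly, and then proves a $*$-version of Lemma~\ref{l:SN}: for a Skolem--Noether intertwiner $Q_0$ between two irreducible $*$-representations of a simple algebra, the matrix $Q_0^tQ_0$ lies in the centralizer, which at the irreducible level consists of scalars only, so $Q_0^tQ_0=\alpha I$ and $Q_0/\sqrt{\alpha}$ is orthogonal; the orthogonal conjugation is then assembled block by block from the Artin--Wedderburn pieces as in the proof of Theorem~\ref{t:unique}. You instead reach $\fl(L_A)=\fl(L_B)$ via Proposition~\ref{p:dense}, cite Theorem~\ref{t:unique} as a black box to obtain an arbitrary invertible intertwiner $P$, and then repair $P$ globally by polar decomposition. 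The key observation in both arguments is the same --- $*$-compatibility forces the positive factor $P^*P$ (resp.\ $Q_0^tQ_0$) to centralize the image of the algebra --- but you use it at the level of the whole semisimple algebra $\cA$, where the centralizer need not be scalar, and absorb $H=(P^*P)^{1/2}$ (a polynomial in $P^*P$, hence also in the centralizer) into the conjugation; the paper uses it at the irreducible level, where the centralizer collapses to scalars. Your route avoids re-proving a $*$-refinement of the Skolem--Noether lemma and avoids descending to irreducible summands, at the cost of having to go through the density result to reach Theorem~\ref{t:unique}. Both are sound; yours is a nice simplification of the upgrade step.
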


\begin{proof}
We prove just (i) since the proof of (ii) is analogous. If $\fls(L_A)=\fls(L_B)$, then by Theorem \ref{t:mainsh}(i) there exists a $*$-isomorphism $\cA\to\cB$ given by $A_i\mapsto B_i$. The rest follows as in the proof of Theorem \ref{t:unique} from the $*$-version of Lemma \ref{l:SN}, which in turn is a consequence of the following claim: if $\cC$ is a simple $\R$-algebra and $\iota,\iota':\cC\to M_d(\R)$ are irreducible $*$-representations, then there exists $Q\in\Ortho_d$ such that
\begin{equation}\label{e:SNstar}
\iota'(c)=Q\iota(c)Q^{-1}\qquad \forall c\in\cC.
\end{equation}
Indeed, by the Skolem-Noether theorem there exists $Q_0\in\GL_d(\R)$ such that \eqref{e:SNstar} holds. Because $\iota$ and $\iota'$ are $*$-homomorphisms, 
$$Q_0\iota(c)^tQ_0^{-1}=\iota'(c)^t=(Q_0\iota(c)Q_0^{-1})^t=Q_0^{-t}\iota(c)^tQ_0^t$$
holds for every $c\in\cC$. Therefore $Q_0^tQ_0$ lies in the centralizer of $\iota(\cC)$ in $M_d(\R)$. Since $\iota$ is irreducible representation, $Q_0^tQ_0$ belongs to the center of $M_d(\R)$, so $Q_0^tQ_0=\alpha I$ for $\alpha>0$ because $Q_0^tQ_0$ is positive-semidefinite. Now $Q=\frac{1}{\sqrt{\alpha}}Q_0\in \Ortho_d$ satisfies \eqref{e:SNstar}.
\end{proof}

In free real algebraic geometry an analogous result for free spectrahedra (distinguished convex sets associated to symmetric linear pencils) has been established in \cite{HKM} using nontrivial operator algebra techniques, e.g. Arveson's noncommutative Choquet boundary \cite{Arv}.

\subsection{Kippenhahn's free conjecture}\label{ss:kip}

Kippenhahn's conjecture \cite[Section 8]{Kip} can be restated as follows: if $H_1,H_2\in M_d(\C)$ are hermitian matrices that generate $M_d(\C)$ as a $\C$-algebra, then there exist $\alpha_1,\alpha_2\in\R$ such that the dimension of the kernel of $I-\alpha_1 H_1-\alpha_2 H_2$ is exactly one. While this conjecture has been established for matrices of small size \cite{Sha,Buc}, it is false in general by \cite{Laf}. However, we prove it is true in a free setting.

\begin{cor}\label{c:kipf}
	If $A_1,\dots,A_g\in M_d(\kk)$ generate $M_d(\kk)$ as $\kk$-algebra, then there exist $n\in\N$ and $X_1,\dots,X_g\in M_n(\kk)$ such that $\dim\ker L_A(X)=1$.
\end{cor}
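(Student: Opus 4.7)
The plan is to reduce the problem to producing a rank-one kernel inside the ambient algebra $M_d(\kk)$ itself, and then export that kernel to a kernel of $L_A$ at some matrix tuple via the algebraization trick (Lemma~\ref{l:poly}).

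First, since $A_1,\dots,A_g$ generate $M_d(\kk)$ as a (possibly non-unital) $\kk$-algebra, we have $\cA = M_d(\kk)$, so every matrix in $M_d(\kk)$ is of the form $f(A)$ for some $f\in\px_+$. Choose $f\in\px_+$ with $f(A) = E_{11}$, the standard rank-one idempotent. Then
$$I_d - f(A) = I_d - E_{11} = 0\oplus I_{d-1}$$
has kernel $\kk\cdot e_1$, of dimension exactly one.

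Second, apply Lemma~\ref{l:poly} to this $f$ at $n=1$ with $X_1=\cdots=X_g = 0 \in \kk$ and $Y = 1\in\kk$. The left-hand side of \eqref{e:ker} is then
$$\dim\ker\bigl(L_A(0) - f(A)\otimes 1\bigr) = \dim\ker(I_d - E_{11}) = 1,$$
so the lemma yields $N\in\N$ and matrices $X'_1,\dots,X'_g\in M_N(\kk)$ satisfying $\dim\ker L_A(X') = 1$. Taking $n := N$ and $X_i := X'_i$ gives the desired tuple.

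There is essentially no obstacle: the entire argument is a one-step application of the algebraization trick, with all the combinatorial work already packaged inside Lemma~\ref{l:poly}. The only minor point to verify is that $I_d$ belongs to the (possibly non-unital) algebra $\cA$, which is immediate from the assumption $\cA = M_d(\kk)$; this is what allows us to realize the rank-one idempotent $E_{11}$ as $f(A)$ with $f\in\px_+$.
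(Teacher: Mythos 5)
Your proof is correct and follows the paper's own argument essentially verbatim: both choose $f\in\px_+$ with $f(A)=E_{11}$ (possible since $\cA=M_d(\kk)$), note that $\dim\ker(I-E_{11})=1$, and apply Lemma~\ref{l:poly} with the zero substitution for the $x$-variables and $Y=1$ to transfer this to $\dim\ker L_A(X')=1$ for some matrix tuple $X'$. The only cosmetic difference is that you explicitly spell out the substitution $n=1$, $X=0$, $Y=1$, which the paper leaves implicit in its one-line display.
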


\begin{proof}
By assumption there exists $f\in\px_+$ such that $f(A)=E_{1,1}$. By Lemma \ref{l:poly} there exist $X_i\in M_n(\kk)$ such that
\[1=\dim\ker(I-E_{1,1})=\dim\ker\left(I-\sum_i 0\cdot A_i-1\cdot f(A)\right)=\dim\ker L_A(X).\qedhere\]
\end{proof}

\subsubsection{Hermitian case} The original Kippenhahn's conjecture deals with hermitian matrices and their real linear combinations. Likewise, the free version can be strengthened for hermitian pencils.

\begin{cor}\label{c:kipq}
	If $A_1,\dots,A_g\in H_d(\C)$ generate $M_d(\C)$ as $\C$-algebra, then there exist $n\in\N$ and $X_1,\dots,X_g\in H_n(\C)$ such that $\dim\ker L_A(X)=1$.
\end{cor}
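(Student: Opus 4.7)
My plan is to derive Corollary \ref{c:kipq} as a direct consequence of Corollary \ref{c:kipf} together with the density statement Proposition \ref{p:dense}(i), rather than attempting a hermitian analogue of the algebraization trick. The idea is: first use Corollary \ref{c:kipf} to produce a tuple (not necessarily hermitian) where the pencil has one-dimensional kernel, then invoke the density of hermitian tuples in the free locus to replace it with a hermitian one. Concretely, applying Corollary \ref{c:kipf} with $\kk = \C$: since the given hermitian matrices $A_1, \ldots, A_g$ generate $M_d(\C)$ as a $\C$-algebra, there exist $n \in \N$ and $X_1', \ldots, X_g' \in M_n(\C)$ with $\dim \ker L_A(X') = 1$.

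Next I would observe that the map $Y \mapsto \dim \ker L_A(Y)$ on $M_n(\C)^g$ is upper semicontinuous in the Zariski topology (since the rank of a matrix is lower semicontinuous), so the set
$$U := \{Y \in \fl_n(L_A) : \dim \ker L_A(Y) = 1\}$$
is Zariski open in $\fl_n(L_A)$ and nonempty (it contains $X'$). Since $L_A$ is a hermitian pencil, Proposition \ref{p:dense}(i) says that $\flhn(L_A)$ is Zariski dense in $\fl_n(L_A)$; and a Zariski-dense subset meets every nonempty Zariski-open subset of its closure, so $U \cap \flhn(L_A) \neq \emptyset$. Any tuple in this intersection is a hermitian $X \in H_n(\C)^g$ with $\dim \ker L_A(X) = 1$, which is exactly the desired conclusion.

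The main nontrivial work is already absorbed into the two ingredients: Corollary \ref{c:kipf} rests on the algebraization trick (Lemma \ref{l:poly}) applied to a polynomial $f$ with $f(A) = E_{1,1}$, while Proposition \ref{p:dense}(i) rests on hyperbolicity of $\det L_A$ together with the Real Nullstellensatz. Once these are granted, the passage to the hermitian setting requires no new hermitian-specific algebra, only the semicontinuity/density observation above; this is why I would avoid trying to build a hermitian version of Lemma \ref{l:poly} directly (the block construction there uses non-self-adjoint off-diagonal blocks, so such an adaptation is not automatic).
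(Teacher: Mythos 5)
Your argument is correct and coincides with the paper's own proof: both first invoke Corollary \ref{c:kipf} to produce a nonempty Zariski-open subset $\cO_n=\{X\in\fl_n(L_A)\colon\dim\ker L_A(X)=1\}$ of $\fl_n(L_A)$, and then use the Zariski density of $\flhn(L_A)$ from Proposition \ref{p:dense}(i) to find a hermitian tuple in it. Your remark about why one should not try to build a hermitian analogue of Lemma \ref{l:poly} directly is a correct reading of the situation and matches the paper's strategy.
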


\begin{proof}
The set
$$\cO_n=\{X\in \fl_n(L_A)\colon \dim\ker L_A(X)=1 \}$$
is Zariski open in $\fl_n(L_A)$ and nonempty for some $n\in\N$ by Corollary \ref{c:kipf}. Since $\flhn(L_A)$ is Zariski dense in $\fl_n(L_A)$ by Proposition \ref{p:dense}, we have
\[\cO_n\cap \flhn(L_A)\neq\emptyset.\qedhere\]
\end{proof}

Similar reasoning as in Remark \ref{r:bounds} implies that $n\in\N$ from the statement of Corollary \ref{c:kipq} can be bounded by an exponential function in $g$ and $d$.

\subsubsection{Symmetric case}

Let $L_A$ be a symmetric pencil. In contrast to the hermitian case in Proposition \ref{p:dense}(i), $\flsn(L_A)$ is not Zariski dense in $\fl_n(L_A)$ for $n\ge2$. Hence we cannot use the same arguments as in Corollary \ref{c:kipq} to prove the real version of Kippenhahn's free conjecture. Nevertheless, we can at least deduce the following.

\begin{cor}\label{c:kipqs}
	If $A_1,\dots,A_g\in S_d(\R)$ generate $M_d(\R)$ as $\R$-algebra, then there exist $n\in\N$ and $X_1,\dots,X_g\in S_n(\R)$ such that $\dim\ker L_A(X)=2$.
\end{cor}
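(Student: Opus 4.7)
The strategy is to reduce the symmetric real case to the hermitian complex case already settled in Corollary \ref{c:kipq}, using the standard $*$-embedding $\iota:\C\to M_2(\R)$ that was exploited in the proof of Proposition \ref{p:dense}(ii). The factor of $2$ introduced by passing from $\C$ to $M_2(\R)$ is precisely what will force the kernel dimension to become $2$ rather than $1$.

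First I would observe that since $A_1,\dots,A_g\in S_d(\R)$ generate $M_d(\R)$ as an $\R$-algebra, they also generate $M_d(\C)=M_d(\R)\otimes_\R\C$ as a $\C$-algebra (their $\R$-linear span times $\C$ already covers $M_d(\C)$). Viewing the $A_i$ as elements of $H_d(\C)$, Corollary \ref{c:kipq} supplies an $n\in\N$ and hermitian matrices $X_1,\dots,X_g\in H_n(\C)$ with
\[
\dim_\C\ker L_A(X)=1.
\]

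Next, apply the ampliation $\iota_n=\id_{M_n(\R)}\otimes_\R\iota:M_n(\C)\to M_{2n}(\R)$, which is a $*$-homomorphism; in particular it carries $H_n(\C)$ into $S_{2n}(\R)$, so $X'_i:=\iota_n(X_i)\in S_{2n}(\R)$. Because each $A_i$ is real, a direct check (identical to the one in the proof of Proposition \ref{p:dense}(ii)) shows that
\[
\iota_{dn}\bigl(L_A(X)\bigr)=I-\sum_{i=1}^g A_i\otimes \iota_n(X_i)=L_A(X').
\]

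Finally, the map $\iota_{dn}$ realises the underlying $\R$-linear operator of $L_A(X)$ acting on $\C^{dn}\cong\R^{2dn}$, so
\[
\dim_\R\ker L_A(X')=\dim_\R\ker\iota_{dn}\bigl(L_A(X)\bigr)=2\dim_\C\ker L_A(X)=2.
\]
Setting $n':=2n$ and $X':=(X'_1,\dots,X'_g)\in S_{n'}(\R)^g$ gives the desired tuple. The only mildly delicate point is the identification $\iota_{dn}(A_i\otimes X_i)=A_i\otimes\iota_n(X_i)$, which relies crucially on $A_i$ being real; once this is set up correctly the rest is immediate from Corollary \ref{c:kipq} and the real-dimension-doubling property of $\iota_n$.
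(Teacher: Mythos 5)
Your proposal is correct and follows essentially the same route as the paper: pass from $M_d(\R)$ to $M_d(\C)$, invoke Corollary \ref{c:kipq} to get hermitian $X_i$ with one-dimensional kernel, then push down through the $*$-embedding $\iota_n:M_n(\C)\to M_{2n}(\R)$ to obtain symmetric matrices and the kernel dimension doubles to $2$. Your write-up merely spells out a couple of verifications (the identity $\iota_{dn}(L_A(X))=L_A(\iota_n(X))$ and the real-dimension doubling) that the paper leaves implicit.
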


\begin{proof}
Since $A_1,\dots,A_g$ generate $M_d(\R)$ as $\R$-algebra, they also generate $M_d(\C)$ as $\C$-algebra. Hence there exist $X_1,\dots,X_g\in H_n(\C)$ such that $\dim\ker L_A(X)=1$ by Corollary \ref{c:kipq}. If $\iota_n:M_n(\C)\to M_{2n}(\R)$ is the $*$-embedding of $\R$-algebras from the proof of Proposition \ref{p:dense}(ii), then $\iota(X_i)\in S_{2n}(\R)$ and $\dim\ker L_A(\iota(X))=2$.
\end{proof}


\subsection*{Acknowledgments} The authors thank Bill Helton and Scott McCullough for valuable comments and suggestions.

\begin{appendices}
	
\section[Determinants of monic pencils and invariant theory]{Determinants of monic pencils and invariant theory,
by Claudio Procesi and \v Spela \v Spenko}\label{appA}

In this appendix we give a remark on comparing the determinants of monic pencils evaluated on generic matrices, that is closely related to comparison of free loci discussed in this paper. Let $\kk$ be an algebraically closed field of characteristic 0. First let us recall some basic facts of invariant theory; see e.g. \cite[Chapter 1]{MFK} or \cite[Chapter 14]{Pro1}.

Given a reductive group $G$ over $\kk$  and an affine $\kk$-variety $V$ on which $G$ acts
algebraically, one can construct the categorical quotient $V/\!/G$ with the quotient map $\pi:V\to V/\!/G$.
This is due to Hilbert in characteristic $0$, and Nagata-Mumford in general and it is done as follows. Let
$\cO(V)$ denote the coordinate ring of $V$. Then $G$ acts upon $\cO(V)$ and $\cO(V)^G$, the subalgebra
of $G$-invariants, is a finitely generated $\kk$-algebra (Hilbert's 14th problem). Then one defines $V/\!/G$ as
the variety with coordinate ring $\cO(V)^G$ and the map $\pi$ is defined by the inclusion $\cO(V)^G\subset \cO(V)$. The basic facts of the Hilbert-Mumford theory are the following.
\begin{enumerate}
\item The map $\pi$ is constant on $G$-orbits.
\item The map $\pi$ is surjective.
\item The variety $V/\!/G$ {\it parametrizes the closed $G$-orbits in $V$}, in the sense that for each $p\in V/\!/G$ there exists a unique closed orbit $O_p\subset V$ such that
$$\pi^{-1}(p)=\{v\in V\colon \overline{Gv}\supset O_p\}.$$
\end{enumerate}
In our case take $V=M_d(\kk)^g$ and let $G=\GL_d(\kk)$ act on $V$ by simultaneous conjugation. The coordinate ring of $V$ is the polynomial ring $\kk[\xi_{\imath\jmath}^{(j)}\colon 1\le j\le g, 1\le \imath,\jmath\le d]$. Now consider generic $d\times d$ matrices $\Xi_j=(\xi_{\imath\jmath}^{(j)})_{\imath\jmath}$ for $j=1,\dots,g$. In characteristic 0 the ring of invariants is generated by the elements $\tr w(\Xi)$ where $w$ runs over all words in 
the free monoid
$\mx$, by the first fundamental theorem of matrix invariants \cite[Theorem 11.8.1]{Pro1}. Moreover, 
the words 
 of length at most $d^2$ suffice \cite[final remark]{Raz} (or see \cite[Subsection 11.8.9]{Pro1}).\looseness=-1

Finally, one should think of the points in $M_d(\kk)^g$ as the set of $d$-dimensional representations of
the free algebra $\px$ and the equivalence under conjugation as isomorphism of representations. By a Theorem of M. Artin \cite[Section 12]{Art}, the closed orbits correspond to the semisimple representations, that is
the points $(A_1,\dots,A_g)$ whose coordinates generate a semisimple subalgebra of $M_d(\kk)$. In general
the closed orbit associated to a point $(A_1,\dots,A_g)$ is that of the semisimple representation associated
to the direct sum of the irreducible representations quotients of a Jordan-H\"older series.

Now consider two monic pencils of size $d$ associated to points $A = (A_1,\dots,A_g)$ and $B = (B_1,\dots,B_g)$. Suppose that $A$ and $B$ give the same point in the quotient $M_d(\kk)^g/\!/\GL_d(\kk)$, that is, their associated closed orbits coincide with the orbit of some semisimple point $\tilde{A} = (\tilde{A}_1,\dots,\tilde{A}_g)$. For arbitrary $m\in\N$ let $\Xi$ be a tuple of $m\times m$ generic matrices. Since $\det L_A(\Xi)$ is invariant under conjugation by $\GL_d(\kk)$ on its coefficients (and clearly polynomial in the entries of $A$) we have
$$\det L_A(\Xi) = \det L_{\tilde{A}}(\Xi) = \det L_B(\Xi).$$
The converse is given by the following theorem.

\begin{thm}\label{ta}
Let $A,B\in M_d(\kk)^g$. If $\Xi$ is a tuple of generic $d^2\times d^2$ matrices and $\det L_A(\Xi)=\det L_B(\Xi)$, then $A,B$ give rise to the same point in $M_d(\kk)^g/\!/\GL_d(\kk)$.
\end{thm}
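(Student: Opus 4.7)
The plan is to recover $\tr w(A)$ for all words $w\in\mx$ with $|w|\le d^2$ directly from $\det L_A(\Xi)$, and then invoke the first fundamental theorem of matrix invariants together with Razmyslov's bound (both recalled in the appendix above) to conclude that $A$ and $B$ define the same point of $M_d(\kk)^g/\!/\GL_d(\kk)$. As a first step, I would pass to logarithms: since $L_A(0)=I$, the polynomial $\det L_A(\Xi)$ has constant term $1$, so $\log\det L_A(\Xi)$ is well defined in the completion of the coordinate ring of $M_{d^2}(\kk)^g$ at the origin. Applying $\log\det(I-M)=-\sum_{k\ge1}\tr(M^k)/k$ to $M=\sum_i A_i\otimes\Xi_i$, using $M^k=\sum_{w\in\mx_k}w(A)\otimes w(\Xi)$ and $\tr(X\otimes Y)=\tr(X)\tr(Y)$, and grouping by cyclic equivalence classes, yields
$$\log\det L_A(\Xi)=-\sum_{k\ge1}\frac{1}{k}\sum_{[w]\in\mx_k/\cyceqs}|[w]|\,\tr w(A)\,\tr w(\Xi),$$
together with the analogous formula for $B$.

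Next, equating the two power series (which is permitted by the hypothesis $\det L_A(\Xi)=\det L_B(\Xi)$) and comparing the homogeneous components of degree $k$ in the entries $\xi^{(j)}_{\imath\jmath}$, for every $k\ge 1$ I obtain
$$\sum_{[w]\in\mx_k/\cyceqs}|[w]|\bigl(\tr w(A)-\tr w(B)\bigr)\,\tr w(\Xi)=0$$
in the coordinate ring of $M_{d^2}(\kk)^g$. Now I invoke the $\kk$-linear independence of the cyclic trace monomials $\{\tr w(\Xi):[w]\in\mx_k/\cyceqs\}$ in the coordinate ring of $M_N(\kk)^g$ whenever $k\le N$; this is the statement already used in the proof of Proposition \ref{p:nil} via \cite[Theorem 4.5, Proposition 8.3]{Pro}. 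With $N=d^2$ as in the hypothesis, this linear independence applies for every $k\le d^2$ and gives
$$\tr w(A)=\tr w(B)\qquad\text{for all }w\in\mx\text{ with }|w|\le d^2.$$

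To finish, the ring of $\GL_d(\kk)$-invariants on $M_d(\kk)^g$ is generated by the traces $\{\tr w:|w|\le d^2\}$, by the Razmyslov bound recalled above. Hence $A$ and $B$ take equal values on every invariant polynomial, so they map to the same point of the categorical quotient. The main obstacle in this plan is the degree/size interplay: the linear independence step in degree $k$ requires the generic matrices $\Xi$ to have size at least $k$, while Razmyslov forces us to handle words of length up to $d^2$, so the choice $N=d^2$ is precisely the tight match that makes the argument go through.
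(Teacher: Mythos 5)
Your proof is correct and follows essentially the same route as the paper's: both extract the identities $\sum_{|w|=k}\bigl(\tr w(A)-\tr w(B)\bigr)\tr w(\Xi)=0$, invoke the absence of pure trace identities of degree $\le d^2$ on $M_{d^2}(\kk)$, and close with Razmyslov's bound. The only cosmetic difference is that you use the formal identity $\log\det(I-M)=-\sum_k\tr(M^k)/k$ where the paper introduces a parameter $t$ and compares characteristic polynomials of $\sum_j A_j\otimes\Xi_j$ and $\sum_j B_j\otimes\Xi_j$; these are equivalent ways of recovering the trace power sums.
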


\begin{proof}
If $t$ is an indeterminate, then by the hypothesis we have
$$\det\left(I-t\sum_j A_j\otimes \Xi_j\right)
=\det\left(I-t\sum_j B_j\otimes \Xi_j\right).
$$
This means that the two $d^3\times d^3$ matrices ${\bf A}=\sum_j A_j\otimes \Xi_j$ and ${\bf B}=\sum_j B_j\otimes \Xi_j$ have the same characteristic polynomial, hence for all $\ell$ we have
$$\tr({\bf A^\ell})=\tr({\bf B^\ell}).$$
Now
 $(\sum_{j=1}^g x_j)^\ell=\sum_{w\in\mx,|w|=\ell}w$ and $w(A\otimes \Xi)=w(A)\otimes w(\Xi)$. Hence for all $\ell$ we have
\begin{align*}
&\sum_{w\in\mx,|w|=\ell}\big(\tr w(A)-\tr w(B)\big) \tr w(\Xi) \\
=& \tr\left(\sum_{w\in\mx,|w|=\ell}\left(w(A)\otimes w(\Xi)-w(B)\otimes w(\Xi)\right)\right) \\
=& \tr({\bf A^\ell})-\tr({\bf B^\ell}) = 0.
\end{align*}
If $W_{\ell}$ denotes a set of representatives words of length $\ell$ with respect to the cyclic equivalence $w_1w_2\cyceq w_2w_1$, for instance Lyndon words, and $s_w$ denotes the number of words that are cyclically equivalent to $w$, then we have
$$\sum_{w\in W_{\ell}} s_w \big(\tr w(A)-\tr w(B)\big) \tr w(\Xi)
=\sum_{w\in\mx,\ |w|=\ell}\big(\tr w(A)-\tr w(B)\big) \tr w(\Xi)=0,$$
a pure trace identity on $M_{d^2}(\kk)$.
We know that there are no pure trace identities for $\ell\times \ell$ matrices of degree at most $\ell$ 
(see \cite[Proposition 8.3]{Pro} or the second fundamental theorem of matrix invariants \cite[Theorem 11.8.5]{Pro1}),
so we must have $\tr w(A) = \tr w(B)$ for all words $w$ of length at most $d^2$. But the elements $\tr w$
for all 
words $w$ of length at most $d^2$ generate the algebra of $\GL_d(\kk)$-invariants on $M_d(\kk)^g$, hence $\pi(A)=\pi(B)$ by the definition of $M_d(\kk)^g/\!/\GL_d(\kk)$.
\end{proof}

\begin{rem}\label{ra}
Since the free locus of a pencil is defined as the vanishing set of the determinant of the pencil, Theorem \ref{ta} is closely related to Theorem \ref{t:main}. However, Theorem \ref{ta} does not imply Theorem \ref{t:main} directly. The issue is that given a monic pencil $L_A$ and a tuple of generic $m\times m$ matrices $\Xi$, the polynomial $\det L_A(\Xi)$ might not generate the radical ideal associated with the hypersurface $\fl_m(L_A)$. It is easy to see that to derive Theorem \ref{t:main} from Theorem \ref{ta} it would suffice to know that for an irreducible pencil $L_A$ (that is, $A$ corresponds to an irreducible representation), $\det L_A(\Xi)$ is an irreducible polynomial for a tuple of large enough generic matrices. Fortunately this nontrivial statement holds by \cite[Theorem 3.4]{HKV}. Hence Theorem \ref{ta} together with the irreducibility result of \cite{HKV} gives an invariant-theory centric approach to the study of free loci.
\end{rem}

\end{appendices}



\begin{thebibliography}{KK}

\bibitem[Art69]{Art}
M. Artin:
{\it On Azumaya algebras and finite dimensional representations of rings},
J. Algebra 11 (1969) 532--563. 

\bibitem[Arv08]{Arv}
W. Arveson:
{\it The noncommutative Choquet boundary},
{J. Amer. Math. Soc.} {21} (2008) 1065--1084. 

\bibitem[BGM05]{BGM}
J. A. Ball, G. Groenewald, T. Malakorn:
{\it Structured noncommutative multidimensional linear systems},
{SIAM J. Control Optim.} {44} (2005) 1474--1528.

\bibitem[BGLS01]{BGLS}
H. H. Bauschke, O. G\"uler, A. S. Lewis, H. Sendov:
{\it Hyperbolic polynomials and convex analysis},
{Canad. J. Math.} {53} (2001) 470--488.

\bibitem[Bea99]{Bea}
A. Beauville:
{\it Determinantal hypersurfaces},
{Michigan Math. J.} {48} (2000) 39--64. 

\bibitem[BMS13]{BMS}
S. T. Belinschi, T. Mai, R. Speicher:
{\it Analytic subordination theory of operator-valued free additive convolution 
	and the solution of a general random matrix problem},
	to appear in
{ J. Reine Angew. Math.}

\bibitem[B-GS07]{BGS}
S. Benzoni-Gavage, D. Serre:
{\it Multi-dimensional hyperbolic partial differential equations:
First-order systems and applications},
Oxford Mathematical Monographs, Oxford University Press, 2007.

\bibitem[Ber70]{Ber}
G. M. Bergman:
{\it Skew fields of noncommutative rational functions, after Amitsur (preliminary version)}, in:
{S\'eminaire M. P. Sch\"utzenberger, A. Lentin et M. Nivat, 1969/70: Probl\`emes Math\'ematiques de la Th\'eorie des Automates} {16}, 1--18, Secr\'etariat math\'ematique, Paris, 1970.

\bibitem[BR11]{BR}
J. Berstel, C. Reutenauer:
{\it Noncommutative rational series with applications},
Encyclopedia of Mathematics and its Applications, 137, 
Cambridge University Press, Cambridge, 2011.

\bibitem[BCR98]{BCR}
J. Bochnak, M. Coste, M.F. Roy: 
{\it Real algebraic geometry},
Results in Mathematics and Related Areas (3) {36}, 
Springer-Verlag, Berlin, 1998.

\bibitem[Br\"a11]{Bra}
P. Br{\"a}nd{\'e}n:
{\it Obstructions to determinantal representability},
{Adv. Math.} {226} (2011) 1202--1212. 

\bibitem[Bre14]{Bre}
M. Bre\v{s}ar:
{\it Introduction to noncommutative algebra},
Universitext, Springer, Cham, 2014.

\bibitem[BEFB94]{BEFB}
S. Boyd, L. El Ghaoui, E. Feron, V. Balakrishnan:
{\it Linear matrix inequalities in system and control theory},
SIAM Studies in Applied Mathematics {15}, Society for Industrial and Applied Mathematics (SIAM), Philadelphia, PA, 1994.

\bibitem[Buc16]{Buc}
A. Buckley:
{\it Indecomposable matrices defining plane cubics},
Oper. Matrices 10 (2016) 1059--1072. 

\bibitem[CIW97]{CIW}
A. M. Cohen, G. Ivanyos, D. B. Wales:
{\it Finding the radical of an algebra of linear transformations},
{J. Pure Appl. Algebra} {117} (1997) 177--193.

\bibitem[Coh95]{Coh}
P. M. Cohn:
{\it Skew fields. Theory of general division rings}, 
Encyclopedia of Mathematics and its Applications {57}, Cambridge University Press, Cambridge, 1995.

\bibitem[Dol12]{Dol12}
I. Dolgachev: {\it Classical Algebraic Geometry: A Modern View}, Cambridge University Press, Cambridge, 2012.

\bibitem[Dou66]{Dou}
R. G. Douglas:
{\it On majorization, factorization, and range inclusion of operators on Hilbert space},
{Proc. Amer. Math. Soc.} {17} (1966) 413--415.

\bibitem[Ebe91]{Ebe}
W. Eberly:
{\it Decompositions of algebras over $\R$ and $\C$},
{Comput. Complexity} {1} (1991) 211--234. 

\bibitem[FR85]{FR}
K. Friedl, L. R\'onyai:
{\it Polynomial time solutions of some problems of computational algebra},
{Proceedings of the seventeenth annual ACM symposium on Theory of computing}, 153--162, ACM, New York, 1985.

\bibitem[G{\aa}r59]{Gar}
L. G{\aa}rding:
{\it An inequality for hyperbolic polynomials},
{J. Math. Mech.} {8} (1959) 957--965. 

\bibitem[HKM13]{HKM}
J. W. Helton, I. Klep, S. McCullough:
{\it The matricial relaxation of a linear matrix inequality},
{Math. Program.} {138} (2013) 401--445.

\bibitem[HKV17]{HKV}
J. W. Helton, I. Klep, J. Vol\v{c}i\v{c}:
{\it Geometry of free loci and factorization of noncommutative polynomials},
preprint \texttt{arXiv:1708.05378}.

\bibitem[HMV06]{HMV}
J. W. Helton, S. McCullough, V. Vinnikov:
{\it Noncommutative convexity arises from linear matrix inequalities},
{J. Funct. Anal.} {240} (2006) 105--191.

\bibitem[HV07]{HV}
J. W. Helton, V. Vinnikov:
{\it Linear matrix inequality representation of sets},
{Comm. Pure Appl. Math.} {60} (2007) 654--674.

\bibitem[K-VV09]{KVV1}
D. S. Kalyuzhnyi-Verbovetski\u\i{}, V. Vinnikov:
{\it Singularities of rational functions and minimal factorizations: the noncommutative and the commutative setting}, 
{Linear Algebra Appl.} {430} (2009) 869--889.

\bibitem[K-VV12]{KVV2}
D. S. Kalyuzhnyi-Verbovetski\u\i{}, V. Vinnikov:
{\it Noncommutative rational functions, their difference-differential calculus and realizations}, 
{Multidimens. Syst. Signal Process.} {23} (2012) 49--77.

\bibitem[Kip51]{Kip}
R. Kippenhahn:
{\it \"Uber den Wertevorrat einer Matrix},
{Math. Nachr.} {6} (1951) 193--228.

\bibitem[KPV15]{KPV}
M. Kummer, D. Plaumann, C. Vinzant:
{\it Hyperbolic polynomials, interlacers, and sums of squares},
{Math. Program.} {153} (2015) 223--245. 

\bibitem[Laf83]{Laf}
T. J. Laffey:
{\it A counterexample to Kippenhahn's conjecture on Hermitian pencils},
{Linear Algebra Appl.} {51} (1983) 179--182.

\bibitem[MOR91]{MOR}
B. Mathes, M. Omladič, H. Radjavi:
{\it Linear spaces of nilpotent matrices},
{Linear Algebra Appl.} {149} (1991) 215--225. 

\bibitem[MFK94]{MFK}
D. Mumford, J. Fogarty, F. Kirwan:
{\it Geometric invariant theory},
Third edition, Ergebnisse der Mathematik und ihrer Grenzgebiete (2) 34, Springer-Verlag, Berlin, 1994.

\bibitem[NT12]{NT}
T. Netzer, A. Thom:
{\it Polynomials with and without determinantal representations},
{Linear Algebra Appl.} {437} (2012) 1579--1595. 

\bibitem[NC10]{NC10}
M. A. Nielsen, I. L. Chuang: 
{\it Quantum Computation and Quantum Information}, 10th Anniversary edition, Cambridge University Press, Cambridge, 2011.

\bibitem[Pap97]{Pap}
C. J. Pappacena:
{\it An upper bound for the length of a finite-dimensional algebra}, 
{J. Algebra} {197} (1997) 535--545.

\bibitem[Pau02]{Pau02}
V. Paulsen: {\it Completely bounded maps and operator algebras}, Cambridge University Press, Cambridge, 2002.

\bibitem[Pro76]{Pro}
C. Procesi:
{\it The invariant theory of $n\times n$ matrices},
{Adv. Math.} {19} (1976) 306--381.

\bibitem[Pro07]{Pro1}
C. Procesi:
{\it Lie groups. An approach through invariants and representations},
Universitext, Springer, New York, 2007.

\bibitem[Raz74]{Raz}
Y. P. Razmyslov:
{\it Identities with trace in full matrix algebras over a field of characteristic zero},
Izv. Akad. Nauk SSSR Ser. Mat. 38 (1974) 723--756. 

\bibitem[Ren06]{Ren}
J. Renegar:
{\it Hyperbolic programs, and their derivative relaxations},
{Found. Comput. Math.} {6} (2006) 59--79. 

\bibitem[Reu96]{Reu}
C. Reutenauer:
{\it Inversion height in free fields},
{Selecta Math. (N.S.)} {2} (1996) 93--109. 

\bibitem[Row88]{Row}
L. H. Rowen:
{\it Ring theory. Vol. I},
Pure and Applied Mathematics {127}, Academic Press, Inc., Boston, MA, 1988.

\bibitem[Sha82]{Sha}
H. Shapiro:
{\it On a conjecture of Kippenhahn about the characteristic polynomial of a pencil generated by two Hermitian matrices. I},
{Linear Algebra Appl.} {43} (1982) 201--221.

\bibitem[SIG97]{SIG97}
R.E. Skelton, T. Iwasaki, D. E. Grigoriadis:
{\it  A unified algebraic approach to control design}, CRC Press, 1997.

\bibitem[Vol15]{Vol}
J. Vol\v{c}i\v{c}:
{\it Matrix coefficient realization theory of noncommutative rational functions},
preprint \texttt{arXiv:1505.07472}.

\bibitem[Vol17]{Vol1}
J. Vol\v{c}i\v{c}:
{\it On domains of noncommutative rational functions},
Linear Algebra Appl. 516 (2017) 69--81.

\bibitem[WSV12]{WSV12}
H. Wolkowicz, R. Saigal, L. Vandenberghe (editors): {\it Handbook of semidefinite programming: theory, algorithms, and applications}, vol. 27., Springer Science \& Business Media, 2012.

\end{thebibliography}
\end{document}